\documentclass[reqno,11pt,a4paper]{amsart}
\usepackage[T1]{fontenc}
\usepackage{graphicx,mathtools,amssymb,amsthm,thmtools}
\usepackage{xcolor,babel,amsmath,enumitem,bm}
\usepackage{appendix}
\usepackage{tikz-cd,caption}
\usetikzlibrary{patterns, arrows.meta, positioning}
\usepackage[numbers]{natbib}
\newtheorem{theorem}{Theorem}[section]
\newtheorem{lemma}{Lemma}[section]
\newtheorem{definition}{Definition}[section]

\newtheorem{remark}{Remark}[section]
\numberwithin{equation}{section}
\usepackage[%
colorlinks=true,  % 启用彩色链接而非边框
linkcolor=blue,  % 内部链接颜色
citecolor=blue,  % 引用链接颜色
urlcolor=black,   % URL链接颜色
anchorcolor=black,% 锚点链接颜色
]{hyperref}
\makeatletter
\@ifundefined{theHtheorem}{}{%
}
\@ifundefined{theHlemma}{}{%
}
\@ifundefined{theHremark}{}{%
}
\makeatother
\usepackage{mathrsfs}
\allowdisplaybreaks[4]
\begin{document}
	\title[ commutator estimates and their applications]
	{commutator estimates and their applications to the transport-type equations }
	\author{Qianyuan Zhang}
	\address{Qianyuan Zhang\newline
		School of Mathematics and Statistics\\
		Huazhong University of Science and Technology, Wuhan 430074,  China}
	\email{qianyuanzhang@hust.edu.cn}
	
	\author{Kai Yan\textsuperscript{*}}
	\thanks{\noindent $^*$Corresponding author.}
	\address{Kai Yan (Corresponding author)\newline
		School of Mathematics and Statistics\\
		Huazhong University of Science and Technology, Wuhan 430074,  China}
	\email{kaiyan@hust.edu.cn}
	
	\begin{abstract}
	In this paper, we derive new commutator estimates in the Triebel-Lizorkin spaces by employing Bony's para-product decomposition, the Nikol'skij representation, and the Fefferman-Stein vector-valued maximal function. These estimates are then applied to develop a general theory for transport equations. Although analogous results are already available in the setting of Besov spaces, the methods developed there do not carry over directly to the Triebel-Lizorkin case. Our approach works for Triebel-Lizorkin spaces and, as a byproduct, also yields the corresponding results in Besov spaces. All proofs are presented in a unified manner that applies to both scales of function spaces, thereby extending and sharpening previous results on transport equations in these frameworks. Furthermore, the general theory we obtain is widely applicable to evolution equations, including incompressible and compressible ideal fluid flows, shallow water waves, and related models. As an illustration, we consider the two-component Euler-Poincaré system. Using the theoretical framework developed herein, we establish its local well-posedness and a blow-up criterion in both sub-critical and critical Triebel-Lizorkin spaces.
	\end{abstract}

	\maketitle
	
	\noindent {\sl Keywords\/}: commutator estimates, Triebel-Lizorkin spaces, transport equations, Euler-Poincar\'{e} system, %well-posedness, blow-up
	Camassa-Holm equation
	
	\vskip 0.2cm
	
	\noindent {\sl AMS Subject Classification (2020)}: 35G25, 35Q35, 35Q49 \\
	
	\setcounter{equation}{0}

	\section{Introduction}
	\subsection{Commutator estimates}
	The primary objective of this paper is to establish commuta\-tor estimates in the Triebel-Lizorkin spaces. Commutator estimates are fundamen\-tal tools in analysis and the theory of nonlinear partial differential equations. In the study of nonlinear PDEs, particularly in fluid dynamics and dispersive equations, one frequently encounters nonlinear terms of the form $vf$ or more general composi\-tions. The classical Leibniz rule for derivatives, $\partial(vf)=(\partial v)f+v(\partial f)$, provides a precise description of how derivatives distribute over products. However, when dealing with fractional derivatives  $(-\Delta)^{s/2}$ or more general Fourier multipliers $T$, the situation becomes considerably more subtle. The commutator $[Tv,f]=T(vf)-vTf$ emerges as a fundamental quantity that measures the deviation from a simple Leibniz rule and enables the redistribution of derivatives in a way that facilitates a priori estimates. In \cite{MR951744}, Kato and Ponce proved the following commutator estimates:
	\[ \|J^s(vf)-vJ^sf\|_{L^p}\leq C\big(\|\nabla v\|_{L^\infty}\|J^{s-1}f\|_{L^p}+\|J^sv\|_{L^p}\|f\|_{L^\infty}\big), \]
	where $v,f\in\mathscr{S}(\mathbb{R}^d)$, $s>0,1<p<\infty$,   $J^s\triangleq(I-\Delta)^{\frac{s}{2}}$, and $C$ is a constant depending only on $d, p, s$. In recent decades, extensive work has been devoted to extending Kato-Ponce type estimates to broader classes of operators and function spaces. Danchin \cite{MR1827098} established commutator estimates for $[v,\Delta_j]\cdot\nabla f$ in Besov spaces, where $\Delta_j$ denotes the Littlewood-Paley projection operator (see \eqref{def:Delta_j} for details). Under suitable regularity assumptions, he derived norm inequalities linking the commutator to $\nabla v$ and $f$. Subsequently, these estimates were extended to the setting of Triebel-Lizorkin spaces, see \cite{MR2592288,MR4240785,2601.10071} for further discussions. Moreover, Takada \cite{MR2733256} investigated the optimal differential order for these estimates by means of counterexamples in Besov and Triebel-Lizorkin spaces. Despite these advances, the development of Kato-Ponce type commutator estimates in Triebel-Lizorkin spaces remains incomplete. 
	
	As is well known, the Triebel-Lizorkin spaces $F^s_{p,q}$ and their homogeneous versions $\dot{F}^s_{p,q}$ provide a unified framework encompassing many classical function spaces arising in the theory of partial differential equations. Notable identifications include (see the details in \cite{MR2768550,MR781540}):
	\begin{itemize}[left=0pt]
		\item The Sobolev spaces $W^{s,p}=F^s_{p,2}$ for $s\in\mathbb{R},1<p<\infty.$ In particular, $H^s=F^s_{2,2}$.
		\item The H\"{o}lder-Zygmund spaces $\mathscr{C}^s=F^s_{\infty,\infty}$ for $s>0. $
		\item The Hardy spaces $\mathcal{H}_p=\dot{F}^0_{p,2}$ for $0<p<\infty$. 
		\item The space of functions of bounded mean oscillation $BMO=\dot{F}^0_{\infty,2}$. 
		\item The Besov spaces: $B^s_{p,\min(p,q)} \hookrightarrow F^s_{p,q}\hookrightarrow B^s_{p,\max(p,q)}$ $(s\in\mathbb{R},(p,q)\in[1,\infty)\times [1,\infty]$ or $p=q=\infty$). In particular, $F^s_{p,p}=B^s_{p,p}$ $(s\in\mathbb{R},1\leq p\leq \infty)$.
	\end{itemize}
	This unified perspective serves as the primary motivation for refining the Kato-Ponce type commutator estimates in this context. The commutator estimates in Triebel-Lizorkin spaces, however, presents distinct and considerable challenges compared to that in Besov spaces. From the definition of Besov spaces (see Section \ref{section:2}), it becomes evident that the spaces $L^p_\Omega$ serve as the fundamental building blocks, where $L_\Omega^p\triangleq \{f\in L^p\colon\text{supp}\hat{f}\subset\Omega\}$ with $1\leq p\leq \infty$. Indeed, each dyadic piece $\Delta_j f$ of a Besov function belongs to $L_\Omega^p$ with $\Omega=\mathscr{B}(\frac{8}{3}2^{j+1})$ (see \eqref{def:Delta_j}), and these pieces can be treated separately within the theory of the spaces $L_\Omega^p$. Bernstein's lemma (Lemma \ref{lem:Bernstein}) then allows us to control the dependence of the constants on $j$ in the relevant estimates via homogeneity arguments, after which the estimates are summed up in the sense of \eqref{def:norm_nonhomogeneous B}. This reduction makes many problems in the Besov setting more convenient and tractable. The Triebel-Lizorkin spaces, by contrast, do not admit such a reduction. The pieces $\Delta_jf$ cannot be treated one after another, they must be considered simultaneously, as seen from the definition of the $F^s_{p,q}$-norm (see \eqref{def:norm_nonhomogeneous T-L}). Their norm involves a non‑factorable mixture of $l^q$ and $L^p$, forcing one to work directly with the full expression $\|\|\cdot\|_{l^q}\|_{L^p}$. This inherent complexity makes the Fefferman-Stein vector-valued maximal function inequality (Lemma  \ref{lem:Fguji}) not merely a handy tool, but an indispensable ingredient for establishing key estimates. In fact, it is the vector-valued counterpart of the classical maximal inequality (Lemma \ref{lem:max}) that provides the foundation for treating Triebel-Lizorkin spaces. Furthermore, in the construction of the commutator estimates (Theorem \ref{prop:commutator-estimates}), we rely on the Nikol'skij representations (Lemmas \ref{lem:Nik1} and \ref{lem:Nik2}), which play a crucial role in that argument.  They enable us to relax the regularity condition on the function from $s>0$ to $s>-d\min(\frac{1}{p},\frac{1}{p'})$, where $p'$ is the conjugate exponent of $p$. Moreover, under this condition, we obtain a more concise estimate for $s<1+\frac{d}{p}$, a result that was not present in previous works. While the focus of this paper is on the more challenging Triebel-Lizorkin setting, the framework developed here is equally applicable to Besov spaces, in that context, the same arguments hold with the scalar maximal inequality (Lemma \ref{lem:max}) replacing its vector-valued counterpart (Lemma \ref{lem:Fguji}). More precisely, we have the following results.
	\begin{theorem}\label{prop:commutator-estimates}
		(Commutator estimates) Let $d\in \mathbb{N}^+$, $1\leq p,q\leq \infty$, with the restriction that in the Triebel-Lizorkin case we take either $p<\infty$ or $p=q=\infty$, and $\frac{1}{p}+\frac{1}{p'}=1$. Assume that
		\begin{equation}
			s>-d\min\big(\frac{1}{p},\frac{1}{p'}\big)\quad \text{or}\quad s>-1-d\min\big(\frac{1}{p},\frac{1}{p'}\big)\quad\text{if}\quad \textup{div} v=0.
		\end{equation}
		Then there exists a constant $C$ depending only on $d, p, q$ and $s$ such that the following estimates hold true:
		\begin{enumerate}[label=(\roman*)]
			\item if $s<1+\frac{d}{p}$, then 
			\begin{align}
				\Big\|\big\|2^{js}([v,\Delta_j]\cdot \nabla f)\big\|_{l^q}\Big\|_{L^p(\mathbb{R}^d)}&\leq C\|\nabla v\|_{F^{\frac{d}{p}}_{p,\infty}(\mathbb{R}^d)\cap L^\infty(\mathbb{R}^d)}\|f\|_{F^s_{p,q}(\mathbb{R}^d)},\label{eq:commutator-estimates1}\\
				\Big\|2^{js}\big\|[v,\Delta_j]\cdot \nabla f\big\|_{L^p(\mathbb{R}^d)}\Big\|_{l^q}&\leq C\|\nabla v\|_{B^{\frac{d}{p}}_{p,\infty}(\mathbb{R}^d)\cap L^\infty(\mathbb{R}^d)}\|f\|_{B^s_{p,q}(\mathbb{R}^d)}.\label{eq:commutator-estimates1-Besov}
			\end{align}
			\item if $s>1+\frac{d}{p}$, or $s=1+\frac{d}{p}$ and $p=1$, then
			\begin{equation}\label{eq:commutator-estimates2}
				\Big\|\big\|2^{js}([v,\Delta_j]\cdot \nabla f)\big\|_{l^q}\Big\|_{L^p(\mathbb{R}^d)}\leq C\|\nabla v\|_{F^{s-1}_{p,q}(\mathbb{R}^d)}\|f\|_{F^s_{p,q}(\mathbb{R}^d)}.
			\end{equation}
			\item if $s>1+\frac{d}{p}$, or $s=1+\frac{d}{p}$ and $q=1$, then
			\begin{equation}\label{eq:commutator-estimates2-Besov}
				\Big\|2^{js}\big\|[v,\Delta_j]\cdot \nabla f\big\|_{L^p(\mathbb{R}^d)}\Big\|_{l^q}\leq C\|\nabla v\|_{B^{s-1}_{p,q}(\mathbb{R}^d)}\|f\|_{B^s_{p,q}(\mathbb{R}^d)}.
			\end{equation}
		\end{enumerate}
		Further, if $s>0$ (or $s>-1$ and $\textup{div}v=0$), then
		\begin{align}
			\Big\|\big\|2^{js}([v,\Delta_j]\cdot \nabla f)\big\|_{l^q}\Big\|_{L^p(\mathbb{R}^d)}&\leq C\big(\|\nabla v\|_{L^\infty(\mathbb{R}^d)}\|f\|_{F^s_{p,q}(\mathbb{R}^d)}+\|\nabla f\|_{L^\infty(\mathbb{R}^d)}\|\nabla v\|_{F^{s-1}_{p,q}(\mathbb{R}^d)}\big),\label{eq:commutator-estimates4}\\
			\Big\|\big\|2^{js}([v,\Delta_j]\cdot \nabla f)\big\|_{l^q}\Big\|_{L^p(\mathbb{R}^d)}&\leq C\big(\|\nabla v\|_{L^\infty(\mathbb{R}^d)}\|f\|_{F^s_{p,q}(\mathbb{R}^d)}+\|f\|_{L^\infty(\mathbb{R}^d)}\|\nabla v\|_{F^{s}_{p,q}(\mathbb{R}^d)}\big),\label{eq:commutator-estimates5}\\
			\Big\|2^{js}\big\|[v,\Delta_j]\cdot \nabla f\big\|_{L^p(\mathbb{R}^d)}\Big\|_{l^q}&\leq C\big(\|\nabla v\|_{L^\infty(\mathbb{R}^d)}\|f\|_{B^s_{p,q}(\mathbb{R}^d)}+\|\nabla f\|_{L^\infty(\mathbb{R}^d)}\|\nabla v\|_{B^{s-1}_{p,q}(\mathbb{R}^d)}\big),\label{eq:commutator-estimates4-Besov}\\
			\Big\|2^{js}\big\|[v,\Delta_j]\cdot \nabla f\big\|_{L^p(\mathbb{R}^d)}\Big\|_{l^q}&\leq C\big(\|\nabla v\|_{L^\infty(\mathbb{R}^d)}\|f\|_{B^s_{p,q}(\mathbb{R}^d)}+\|f\|_{L^\infty(\mathbb{R}^d)}\| \nabla v\|_{B^{s}_{p,q}(\mathbb{R}^d)}\big).\label{eq:commutator-estimates5-Besov}
		\end{align}
		If besides $f=v$, when $s>0$ (or $s>-1$ if $\textup{div}v=0$), then we also have 
		\begin{align}
			\Big\|\big\|2^{js}([v,\Delta_j]\cdot \nabla f)\big\|_{l^q}\Big\|_{L^p(\mathbb{R}^d)}&\leq C\|\nabla v\|_{L^\infty(\mathbb{R}^d)}\|f\|_{F^s_{p,q}(\mathbb{R}^d)},\label{eq:commutator-estimates3}\\
			\Big\|2^{js}\big\|[v,\Delta_j]\cdot \nabla f\big\|_{L^p(\mathbb{R}^d)}\Big\|_{l^q}&\leq C\|\nabla v\|_{L^\infty(\mathbb{R}^d)}\|f\|_{B^s_{p,q}(\mathbb{R}^d)}.\label{eq:commutator-estimates3-Besov}
		\end{align}
	\end{theorem}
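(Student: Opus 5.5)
We follow Danchin's paraproduct approach, but organize every step so that each dyadic block is bounded pointwise by maximal functions. Only after that do we take the mixed norm $\|\|\cdot\|_{l^q}\|_{L^p}$ (Triebel--Lizorkin case, using Lemma \ref{lem:Fguji}) or $\|\|\cdot\|_{L^p}\|_{l^q}$ (Besov case, using Lemma \ref{lem:max}). Writing $R_j=[v,\Delta_j]\cdot\nabla f=\sum_k[v^k,\Delta_j]\partial_kf$, Bony's decomposition gives
\[
R_j=\sum_k[T_{v^k},\Delta_j]\partial_kf+\sum_k T'_{\Delta_j\partial_kf}v^k-\sum_k\Delta_j\big(T_{\partial_kf}v^k\big)-\sum_k\Delta_j R(v^k,\partial_kf),
\]
where $T'_ab=T_ab+R(a,b)$. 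When $\textup{div}\,v=0$ we rewrite $v^k\partial_kf=\partial_k(v^kf)$ in the last remainder. This moves the derivative outside and is the source of the extra $-1$ in the admissible range of $s$.

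\textbf{Step 1 (the commutator term).} By spectral localization only $|j'-j|\le4$ contribute. We write $[S_{j'-1}v^k,\Delta_j]\partial_k\Delta_{j'}f(x)=\int h_j(y)\big(S_{j'-1}v(x)-S_{j'-1}v(x-y)\big)\cdot\nabla\Delta_{j'}f(x-y)\,dy$ with $h_j=2^{jd}h(2^j\cdot)$. The mean value theorem bounds the difference by $|y|\|\nabla v\|_{L^\infty}$, and $\int 2^{j}|y||h_j(y)||g(x-y)|\,dy\lesssim M g(x)$. Bernstein's inequality in its pointwise form ($|\nabla\Delta_{j'}f|\lesssim 2^{j'}M(\Delta_{j'}f)$, via Peetre-type maximal functions) then gives $2^{js}|\cdot|\lesssim\|\nabla v\|_{L^\infty}\,M\big(M(2^{j's}\Delta_{j'}f)\big)(x)$. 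Lemma \ref{lem:Fguji} yields the $\|\nabla v\|_{L^\infty}\|f\|_{F^s_{p,q}}$ bound for all $s$. The term $T'_{\Delta_j\partial_kf}v^k$, at frequencies $\gtrsim2^j$, is handled the same way by summing $2^{(j-j')(s-1)}$-type tails with $\nabla v$ placed in $L^\infty$, or alternatively in $F^{s-1}_{p,q}$ when $\nabla f\in L^\infty$.

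\textbf{Step 2 (paraproduct $T_{\partial f}v$ and remainder).} These terms distinguish the cases. For \eqref{eq:commutator-estimates4} we put $\partial f$ in $L^\infty$ and $\nabla v$ in $F^{s-1}_{p,q}$. For \eqref{eq:commutator-estimates5} we put $S_{j'-1}\partial f$ as $2^{j'}\|f\|_{L^\infty}$-type and $v$ in $F^s_{p,q}$, using $\|\Delta_{j'}v\|\sim2^{-j'}\|\Delta_{j'}\nabla v\|$ for $j'\ge0$ and handling low frequencies separately. When $f=v$, \eqref{eq:commutator-estimates3} follows because both factors can then be exchanged. For (i), where $s<1+\frac dp$, we bound $|S_{j'-1}\partial f|\lesssim\sum_{m<j'}2^{m(1-s)}2^{ms}|\Delta_mf|$ and put $\Delta_{j'}\nabla v$ in $F^{d/p}_{p,\infty}$, effectively in $L^\infty$ after Sobolev embedding. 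The sum over $m$ converges, by a discrete Hardy inequality, exactly when $s<1+\frac dp$ (up to the embedding loss of $d/p$). The Nikol'skij representations (Lemmas \ref{lem:Nik1}, \ref{lem:Nik2}) handle the remainder $\Delta_j\sum_{j'\ge j-3}\Delta_{j'}v\,\widetilde\Delta_{j'}\partial f$. Its pieces have spectrum in balls $\mathscr B(C2^{j'})$, not annuli, so the lemmas give $\|\cdot\|_{F^s_{p,q}}$ control whenever $s>-d\min(\frac1p,\frac1{p'})$ (or one order lower after extracting $\partial_k$ in the divergence-free case). We feed in the product bound $|\Delta_{j'}v\,\widetilde\Delta_{j'}\partial f|\lesssim \|\nabla v\|_{L^\infty}2^{-j'}\cdot 2^{j'}M(\widetilde\Delta_{j'}f)$. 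For (ii) and (iii), where $s\ge1+\frac dp$, we instead put $\nabla v$ in $F^{s-1}_{p,q}$ (respectively $B^{s-1}_{p,q}$) and $f$ in $F^s_{p,q}\hookrightarrow$ Lipschitz. At the endpoint $s=1+\frac dp$ this embedding requires $p=1$ for $F$ ($F^{1+d}_{1,q}\hookrightarrow \mathrm{Lip}$) and $q=1$ for $B$.

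\textbf{Main obstacle.} We expect the hardest part to be the low-regularity remainder and the uniform use of Lemma \ref{lem:Fguji} there. The lemma needs $p<\infty$, and the $p=q=\infty$ case must be done separately through the $\sup$ over dyadic cubes. Moreover, pointwise maximal bounds for $|S_{j'-1}\partial f|$ with negative $s$ need an $r$-maximal function $M_r$ with $r<\min(p,q)$ to stay within the Fefferman--Stein range. We would first try Peetre maximal functions $\sup_y|\Delta_jf(x-y)|/(1+2^j|y|)^{d/r}$, which dominate all the convolution kernels above and are bounded by $(M|\Delta_jf|^r)^{1/r}$. This gives one argument that covers both scales.
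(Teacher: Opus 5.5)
Your decomposition, the commutator piece, the term $T'_{\Delta_j\partial_kf}v^k$, and the whole regime $s>0$ are essentially fine, apart from the low-frequency point at the end. This covers \eqref{eq:commutator-estimates4}, \eqref{eq:commutator-estimates5}, \eqref{eq:commutator-estimates3} and, via embeddings, (ii)--(iii). Applying Lemma \ref{lem:Nik2} directly to the remainder at index $s$ (or $s+1$ after extracting $\partial_k$) is a legitimate alternative to the paper's integration-by-parts and maximal-function treatment of $I_2+I_3$.

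\textbf{Main gap.} Your argument does not reach the genuinely new range in (i): $-d\min(\frac1p,\frac1{p'})<s\le 0$, or $-1-d\min(\frac1p,\frac1{p'})<s\le -1$ when $\textup{div}\,v=0$.

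\emph{Lemma \ref{lem:Nik2} requires $s>0$.} It does not control ball-supported sums for $s>-d\min(\frac1p,\frac1{p'})$, as you assert.

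\emph{Your input uses only the Lipschitz norm of $v$.} You feed in $|\Delta_{j'}v\,\widetilde\Delta_{j'}\partial f|\lesssim\|\nabla v\|_{L^\infty}M(\widetilde\Delta_{j'}f)$. The sum over $j'\ge j-3$ then carries the weight $2^{(j-j')s}$, which is not summable for $s\le 0$ (for $s\le -1$ after extracting $\partial_k$).

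\emph{The missing idea.} You must trade integrability for regularity before invoking Nikol'skij, measuring $v$ in an $L^p$-based norm. For $p\ge 2$ the paper proceeds as follows.
\begin{enumerate}
\item It writes $\|R(\tilde v,f)\|_{F^{s+1}_{p,q}}\lesssim\|R(\tilde v,f)\|_{F^{s+1+d/p}_{p/2,\infty}}$ by Lemma \ref{lem:Triebel-Lizorkin-properties}(i).
\item It applies Lemma \ref{lem:Nik2} at the positive index $s+1+\frac dp$.
\item It closes with H\"older in $L^{p/2}$, pairing $\|2^{m(1+d/p)}\Delta_m\tilde v\|_{l^\infty}\in L^p$ with $\|2^{ms}\widetilde\Delta_m f\|_{l^\infty}\in L^p$.
\end{enumerate}
For $p<2$ it uses $F^{s+1+d/p'}_{1,\infty}$ and $F^{1+d/p}_{p,\infty}\hookrightarrow F^{1+d/p'}_{p',\infty}$ instead. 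The term $R(\textup{div}\,\tilde v,f)$ is handled the same way without the extra $+1$. This is exactly where $\|\nabla v\|_{F^{d/p}_{p,\infty}}$, rather than $\|\nabla v\|_{L^\infty}$, is needed, and where the threshold $-d\min(\frac1p,\frac1{p'})$ comes from.

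\textbf{The paraproduct $\Delta_jT_{\partial f}v$ in (i).} Putting $\Delta_{j'}\nabla v$ uniformly in $L^\infty$ makes $\sum_{m<j'}2^{(m-j')(1-s)}$ converge only for $s<1$, not $s<1+\frac dp$. Putting $S_{j'-1}\partial f$ in $L^\infty$ via Bernstein instead produces the Besov quantity $\|2^{ms}\|\Delta_mf\|_{L^p}\|_{l^q}$, which $\|f\|_{F^s_{p,q}}$ does not control when $p>q$. The paper instead:
\begin{enumerate}
\item splits $\frac1p=\frac1{p_1}+\frac1{p_2}$ with $s<1+\frac d{p_2}$;
\item applies Lemma \ref{lem:Sjguji} to $\nabla f\in F^{s-1-d/p_2}_{p_1,q}$, whose index is negative;
\item uses the embeddings $F^{s-1}_{p,q}\hookrightarrow F^{s-1-d/p_2}_{p_1,q}$ and $F^{d/p}_{p,\infty}\hookrightarrow F^{d/p_2}_{p_2,\infty}$;
\item concludes with Lemma \ref{lem:Nik1}.
\end{enumerate}

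\textbf{Low frequencies of $v$.} These must be removed throughout, not only for \eqref{eq:commutator-estimates5}. The terms $T'_{\Delta_j\partial_kf}v^k$ and $R(v^k,\partial_kf)$ contain $\Delta_{-1}v$, and $\|\Delta_{-1}v\|_{L^\infty}\lesssim\|\nabla v\|_{L^\infty}$ fails. Write $v=S_1v+\tilde v$ as the paper does, and handle $[S_1v,\Delta_j]\cdot\nabla f$ by Taylor's formula, as in your Step 1.
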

	\begin{remark}
		The corresponding arguments in \cite{MR2768550,MR1827098} relies heavily on the specific structure of Besov spaces, where the $L^p$ norm is taken before the $l^q$ norm, together with Bernstein’s inequalities (Lemma \ref{lem:Bernstein}) and an integration by parts technique, to obtain precisely the Besov space version of the commutator estimate stated in Theorem \ref{prop:commutator-estimates}. This approach, however, does not extend directly to the Triebel-Lizorkin setting. In contrast, by employing the Nikol'skij representations together with the Hardy-Littlewood maximal function and Lemma \ref{lem:Sjguji}, we are able to recover the same estimates in a unified way that works for both Triebel-Lizorkin and Besov norms, thereby establishing the full statement of Theorem \ref{prop:commutator-estimates},
		which seems to be new in the existing literature.
	\end{remark}
	\begin{remark}
		There are a number of variations on the statement of Theorem \ref{prop:commutator-estimates}. For instance, the inequalities \eqref{eq:commutator-estimates1}, \eqref{eq:commutator-estimates1-Besov}, \eqref{eq:commutator-estimates4}, \eqref{eq:commutator-estimates4-Besov}, \eqref{eq:commutator-estimates3} and \eqref{eq:commutator-estimates3-Besov} also hold in the homogeneous framework (i.e., with $\dot{\Delta}_j$ in place of $\Delta_j$ and with homogeneous Triebel-Lizorkin and Besov norms instead of non\-homogeneous ones). The proof follows the same lines as that of Theorem \ref{prop:commutator-estimates}, it suffices to replace the nonhomogeneous blocks by their homogeneous counterparts.
	\end{remark}
	
	\begin{remark}
		Compared with the commutator estimates only in the homogeneous Triebel-Lizorkin spaces obtained in \cite{MR2592288,MR4240785,2601.10071}, our construc\-tion is more adaptable to situations involving lower regularity of $f$ or the absence of gradient control (see the comparison graph below). Specifically, the results in \cite{MR2592288,MR4240785,2601.10071} require $s>0 $ (or $s>-1$ and $ \textup{div}v=0 $), whereas we obtain an improved estimate under the weaker assumptions $s>-d\min(\frac{1}{p},\frac{1}{p'})$ (or $s>-1-d\min(\frac{1}{p},\frac{1}{p'})$ and $\textup{div}v=0$). Moreover, in the regime $s<1+\frac{d}{p}$, we derive a simpler estimate \eqref{eq:commutator-estimates1}, which avoids extra derivative terms and complicated index relations, a feature not present in the previous works.
		\[ \begin{tikzpicture}[
			scale=1,
			point/.style={circle, fill=black, inner sep=0.6pt}
			]
			% 第一部分：原始条件
			\begin{scope}[local bounding box=original]
				% 坐标轴
				\draw[->, thick] (0,0) -- (3.85,0) node[right] {$\frac{1}{p}$};
				\draw[->, thick] (0,-0.8) -- (0,2.6) node[left] {$s$};
				
				% 刻度
				\draw (0.05,0) -- (-0.05,0) node[below, font=\tiny] {$0$};
				\draw (3.6,0.05) -- (3.6,-0.05) node[below, font=\tiny] {$1$};

				% 原始条件区域
				\fill[pattern=north east lines, pattern color=red!20] 
				(0.0,0.0) rectangle (3.6,2.6);
				
				% 在区域中心添加文字 
				\node[red, align=center, font=\small] at (1.8, 1.3) {$\lesssim\|\nabla v\|_{L^\infty}\|f\|_{\dot{F}^s_{p,q}}$\\$\quad+\|\nabla f\|_{L^\infty}\|v\|_{\dot{F}^s_{p,q}}$};
				
				% 线段: 
				\draw[red, line width=1pt, dashed] (0,0) -- (3.6,0);
				
				% 点: 
				\draw[red, fill=none, thin] (0,0) circle (1.2pt);

				% 标题放在坐标轴下方
				\node[below, align=center, font=\footnotesize] at (1.9,-0.8) 
				{\textup{results in \cite{MR2592288,MR4240785,2601.10071}}};
			\end{scope}
			
			% 第二部分：拓展后条件
			\begin{scope}[local bounding box=extended, xshift=6.6cm]
				% 坐标轴
				\draw[->, thick] (0,0) -- (3.85,0) node[right] {$\frac{1}{p}$};
				\draw[->, thick] (0,-0.8) -- (0,2.6) node[left] {$s$};
				
				% 刻度
				\draw (0.05,0.2) -- (-0.05,0.2) node[left, font=\tiny] {$1$};
				\draw (3.6,0.05) -- (3.6,-0.05) node[below, font=\tiny] {$1$};
				\draw (0.05,-0.6) -- (-0.05,-0.6) node[below, font=\tiny] {$-d\min(\frac{1}{p},\frac{1}{p'})$};
				
				% 点: 
				\draw[blue, fill=none, thin] (0,0.2) circle (1.2pt);
				\draw[blue, fill=none, thin] (0,-0.6) circle (1.2pt);
				
				\fill[pattern=north east lines, pattern color=blue!20] 
				(0,-0.6) rectangle (3.6,2.6);
				
				% 线
				\draw[blue, line width=1pt, dashed] (0,0.2) -- (3.0,2.6) node[right, font=\scriptsize] {$s=1+\frac{d}{p}$}; 
				\draw[blue, line width=1pt, dashed] (0,-0.6) -- (3.6,-0.6); 
				\node[blue, below,align=center, font=\small] at (1.6, 2.5) {$\lesssim\|\nabla v\|_{F^{s-1}_{p,q}}\|f\|_{F^s_{p,q}}$};
				\node[blue, right,align=center, font=\small] at (0.6, 0.6) {$\lesssim\|\nabla v\|_{F^{\frac{d}{p}}_{p,\infty}\cap L^\infty}\|f\|_{F^s_{p,q}}$};
				
				% 标题放在坐标轴下方
				\node[below, align=center, font=\footnotesize] at (1.9,-0.8) 
				{\textup{our results}};
			\end{scope}
			
			% 连接两部分的箭头 - 使用数学符号\Rightarrow
			\node[font=\small] at (5.0, 0.88)  {\scalebox{2}{$\longrightarrow$}};

		\end{tikzpicture} \]
		The improvement is achieved by a more refined treatment of Bony's para-product decomposition. To ensure that only the gradient part of $v$ appears in the estimates, we decompose $v$ into its low and high frequency parts by writing $v=S_1v+\tilde{v}$. For the term $ \Delta_jR(\tilde{v},\partial_if)$, we split it into $\Delta_j\partial_iR(\tilde{v},f)$ and $\Delta_jR(\textup{div}\tilde{v},f)$ and estimate them separately. In particular, applying the Nikol'skij representations (Lemma \ref{lem:Nik2}) to both components allows us to relax the regularity index up to $s>-d\min(\frac{1}{p},\frac{1}{p'})$ (or $s>-1-d\min(\frac{1}{p},\frac{1}{p'})$ and $\textup{div}v=0$). On the other hand, for the estimate of $\Delta_jT_{\partial_if}\tilde{v}^i$, we combine Lemmas \ref{lem:Nik1} and \ref{lem:Sjguji} to handle the case $s<1+\frac{d}{p}$. The key point is that in treating the terms under consideration, we employ the Nikol'skij representations (Lemmas \ref{lem:Nik1} and \ref{lem:Nik2}) instead of Fefferman-Stein vector-valued maximal inequality (Lemma \ref{lem:Fguji}) used in the aforementioned works, this avoids the emergence of an additional gradient term $\nabla f$.
	\end{remark}
	
	\subsection{Applications.}
	With the commutator estimates established, we observe that they play a crucial role in deriving a priori estimates for following transport equations:
	\begin{equation}\label{Eq}
		\begin{cases}
			\partial_tf+v\cdot\nabla f=g,& t > 0, x \in \mathbb{R}^d,\\
			f|_{t=0}=f_0,
		\end{cases}\tag{T}
	\end{equation}
	where $v\colon\mathbb{R}^+ \times \mathbb{R}^d \to \mathbb{R}^d$ is a given time-dependent vector field, $f_0\colon \mathbb{R}^d \to \mathbb{R}^N$ and $g\colon \mathbb{R}^+ \times \mathbb{R}^d \to \mathbb{R}^N$ $(d,N\in\mathbb{N}^+)$ are the given initial data and source term, respectively. This naturally leads to the second objective of this paper, namely, to develop a theory for transport equations in Triebel-Lizorkin spaces. Transport equations of this type arise naturally in many areas of mathematical physics, particularly in the analysis of partial differential equations from fluid mechanics. Although in such contexts the velocity field $v$ and the source $g$ often depend nonlinearly on the unknown $f$, having a good theory for the linear transport equations \eqref{Eq} constitutes an essential preliminary step for studying more complex coupled systems. Over the past few decades, significant progress has been made on the well-posedness of transport equations with weakly differentiable (in the space variables) velocity fields. In their seminal work, DiPerna and Lions \cite{MR1022305} established existence and uniqueness of solutions in $L^\infty$ under the assumption that $v$ belongs to $W^{1,1}$ (with bounded divergence).  Ambrosio \cite{MR2096794} later extended the result to BV vector fields, under additional conditions on the divergence. Chemin \cite{MR1688875} established a priori estimates for the transport equation in H\"{o}lder spaces $\mathcal{C}^r$, under the condition that the velocity field $v$ is divergence-free. A systematic well-posedness theory for \eqref{Eq} in Besov spaces has been developed in \cite{MR2768550,MR2231013}, provided the vector field $v$ is at least Lipschitz in the spatial variable. Recently, the well-posedness of transport equations in the Triebel-Lizorkin spaces was established in \cite{2601.10071} for the case $s>1+\frac{d}{p}$. When studying the well-posedness of transport equations in different function spaces, the Triebel-Lizorkin and Besov spaces are often treated separately. This is because their proofs rely on distinct intrinsic structures of 
	$B^s_{p,q}$ and $F^s_{p,q}$ and thus require different methods. However, both spaces are constructed based on the Littlewood-Paley frequency decomposition, which decomposes a function into a series of frequency-localized smooth pieces. As a result, they share many similar properties (see Lemma \ref{lem:Triebel-Lizorkin-properties}). It is therefore desirable to unify the discussion of well-posedness for transport type equations in these two spaces. Here and throughout the paper, for simplicity, we will use $X^s_{p,q}$ (resp., $\dot{X}^s_{p,q}$, ) to denote $F^s_{p,q}$ (resp., $\dot{F}^s_{p,q}$) or $B^s_{p,q}$ (resp., $\dot{B}^s_{p,q}$). Using this notation, we develop the following general theory\footnote{Our results (Theorems \ref{Thm:a-priori-estimates} and \ref{thm:solve-transport}) for $x\in\mathbb{R}^d$ may be easily carried out to the periodic case $x\in\mathbb{T}^d$.} (Theorems \ref{Thm:a-priori-estimates} and \ref{thm:solve-transport}) for the transport equations \eqref{Eq} in $X^s_{p,q}$ by employing our commutator estimates (Theorem \ref{prop:commutator-estimates}). 
	\begin{theorem}\label{Thm:a-priori-estimates}
		(A priori estimates for transport equations) Let $d\in\mathbb{N}^+$, $1\leq p,q\leq \infty$, with the restriction that in the Triebel-Lizorkin case we take either $p<\infty$ or $p=q=\infty$, and $\frac{1}{p}+\frac{1}{p'}=1$. Assume that 
		\begin{equation}
			s>-d\min\big(\frac{1}{p},\frac{1}{p'}\big) \quad\text{or}\quad s>-1-d\min\big(\frac{1}{p},\frac{1}{p'}\big)\quad\text{if}\quad\textup{div}v=0.
		\end{equation}
		Let $v$ be a vector field such that $\nabla v$ belongs to $L^1(0,T;X^{s-1}_{p,q}(\mathbb{R}^d))$ if $s>1+\frac{d}{p}$ or $s=1+\frac{d}{p}$ and $p=1 $ if $X^{s-1}_{p,q}(\mathbb{R}^d)=F^{s-1}_{p,q}(\mathbb{R}^d)$ (resp. $q=1$ if $X^{s-1}_{p,q}(\mathbb{R}^d)=B^{s-1}_{p,q}(\mathbb{R}^d) $), or to $L^1(0,T;X^{d/p}_{p,\infty}(\mathbb{R}^d)\cap L^\infty(\mathbb{R}^d))$ if $s<1+\frac{d}{p}$. Then there exists a constant $C$ depending only on $d,p,q$ and $s$, such that for all solutions $f\in L^\infty(0,T;X^s_{p,q}(\mathbb{R}^d))$ to \eqref{Eq} with initial data $f_0\in X^s_{p,q}(\mathbb{R}^d)$
		and $g \in L^1(0,T;X^s_{p,q}(\mathbb{R}^d))$, we have
		\begin{equation}
			\|f(t)\|_{X^s_{p,q}}\leq \|f_0\|_{X^s_{p,q}}+\int_{0}^t\|g(\tau)\|_{X^s_{p,q}}d\tau+C\int_{0}^tZ(\tau)\|f(\tau)\|_{X^s_{p,q}}d\tau,
		\end{equation}
		or hence, 
		\begin{equation}\label{eq:a-priori-estimates}
			\|f(t)\|_{X^s_{p,q}(\mathbb{R}^d)}\leq
			e^{C\int_{0}^tZ(\tau)d\tau} \Big(\|f_0\|_{X^s_{p,q}(\mathbb{R}^d)}+\int_0^te^{-C\int_{0}^\tau Z(\tau')d\tau'}\|g(\tau)\|_{X^s_{p,q}(\mathbb{R}^d)}d\tau\Big),
		\end{equation}
		with
		\begin{equation*}
			Z(t)=\begin{cases}
				\|\nabla v\|_{X^{\frac{d}{p}}_{p,\infty}(\mathbb{R}^d)\cap L^\infty(\mathbb{R}^d)},\ &\text{if}\quad s<1+\frac{d}{p},\\
				\|\nabla v\|_{X^{s-1}_{p,q}(\mathbb{R}^d)},\ &\text{if}\quad s>1+\frac{d}{p}\ \text{or}\ s=1+\frac{d}{p}\ \text{and}\ \begin{cases*}
					p=1,\ \text{if}\ X^{s-1}_{p,q}(\mathbb{R}^d)=F^{s-1}_{p,q}(\mathbb{R}^d),\\
					q=1,\ \text{if}\ X^{s-1}_{p,q}(\mathbb{R}^d)=B^{s-1}_{p,q}(\mathbb{R}^d).
				\end{cases*}\
			\end{cases}
		\end{equation*}
		If $f=v$, then for all $s>0$ $(s>-1$ if $\textup{div}v=0)$ estimates \eqref{eq:a-priori-estimates} hold with 
		\[ Z(t)=\|\nabla v\|_{L^\infty(\mathbb{R}^d)}.  \]
	\end{theorem}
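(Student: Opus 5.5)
We localize the equation in frequency and apply the commutator estimates of Theorem \ref{prop:commutator-estimates}. Applying $\Delta_j$ to \eqref{Eq} gives, for every $j\geq -1$,
\[
\partial_t f_j + v\cdot\nabla f_j = \Delta_j g + R_j,\qquad f_j\triangleq \Delta_j f,\quad R_j\triangleq [v,\Delta_j]\cdot\nabla f .
\]
Each $f_j$ solves a transport equation with the same (Lipschitz) field $v$. We use the flow $\psi_t$ of $v$, $\partial_t\psi_t(x)=v(t,\psi_t(x))$, $\psi_0=\mathrm{id}$, and write Duhamel's formula along characteristics:
\[
f_j(t,\psi_t(x)) = f_{0,j}(x)+\int_0^t\big(\Delta_j g+R_j\big)(\tau,\psi_\tau(x))\,d\tau .
\]
We do not take $L^p$ norms block by block, since that only works in the Besov case. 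Instead we take the mixed norm $\|\|2^{js}\cdot\|_{l^q}\|_{L^p}$ directly in this pointwise identity. Minkowski's inequality (valid since $p,q\ge1$) moves the time integral outside the mixed norm. This gives
\[
\Big\|\big\|2^{js}f_j(t,\psi_t(\cdot))\big\|_{l^q}\Big\|_{L^p}\le \|f_0\|_{F^s_{p,q}}+\int_0^t\Big(\Big\|\big\|2^{js}\Delta_j g(\tau,\psi_\tau)\big\|_{l^q}\Big\|_{L^p}+\Big\|\big\|2^{js}R_j(\tau,\psi_\tau)\big\|_{l^q}\Big\|_{L^p}\Big)d\tau .
\]
The composition with $\psi_\tau$ acts on the whole sequence $(2^{js}h_j(\cdot))_j$ at once, because the same point $\psi_\tau(x)$ is used for every $j$. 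Hence it commutes with the inner $l^q$ norm. A change of variables $y=\psi_\tau(x)$ then turns the $L^p$ norm into the unweighted one, up to the Jacobian factor $\exp\big(\int_0^\tau\|\mathrm{div}\,v\|_{L^\infty}\big)^{1/p}$. For $p=\infty$ there is no Jacobian at all, and for $\mathrm{div}\,v=0$ the factor is $1$. The Besov version is identical, with the order of the norms exchanged.

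The main obstacle is the Jacobian factor. The plan is to absorb it into the Gronwall exponential: it is controlled by $\int\|\nabla v\|_{L^\infty}$, and $\|\nabla v\|_{L^\infty}\le Z$ in every case listed. For the $s>1+\frac dp$ case this uses the embedding $X^{s-1}_{p,q}\hookrightarrow L^\infty$, which is exactly where the endpoint restrictions $p=1$ (for $F$) or $q=1$ (for $B$) at $s=1+\frac dp$ enter. Absorbing the factor this way loses the clean constant $1$ in front of $\|f_0\|$ in the first inequality.

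To keep that constant exactly $1$, we would instead use an energy argument. We multiply the localized equation by $|f_j|^{p-2}f_j$ only when $p=q$, and otherwise estimate the time derivative of the functional $\int\big(\sum_j 2^{jqs}|f_j|^q\big)^{p/q}dx$ directly. Its transport part integrates by parts to $\frac1p\int \mathrm{div}\,v\,(\cdots)$, which is bounded by $C\|\nabla v\|_{L^\infty}\|f\|^p_{F^s_{p,q}}$. Differentiating $t\mapsto\|f(t)\|^p$ then gives the integral inequality with constant $1$ on the data and $g$. To justify the differentiation for $f$ only in $L^\infty_T X^s_{p,q}$, we regularize with $S_n$ and pass to the limit; when $q=\infty$ or $p=\infty$ we work instead with the characteristic formula and a $\sup$.

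Next we bound the commutator term. Theorem \ref{prop:commutator-estimates} gives the pointwise-in-time bound
\[
\Big\|\big\|2^{js}R_j(\tau)\big\|_{l^q}\Big\|_{L^p}\le C\,Z(\tau)\,\|f(\tau)\|_{X^s_{p,q}}.
\]
We use \eqref{eq:commutator-estimates1} and \eqref{eq:commutator-estimates1-Besov} for $s<1+\frac dp$, \eqref{eq:commutator-estimates2} and \eqref{eq:commutator-estimates2-Besov} in the high-regularity case, and \eqref{eq:commutator-estimates3} and \eqref{eq:commutator-estimates3-Besov} when $f=v$. The hypotheses on $s$ match those of that theorem, including the improved range when $\mathrm{div}\,v=0$. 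This yields the first (integral) inequality of Theorem \ref{Thm:a-priori-estimates}.

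Finally, \eqref{eq:a-priori-estimates} follows from the integral inequality by the standard Gronwall lemma. We set $\Phi(t)=\|f_0\|+\int_0^t\|g\|+C\int_0^tZ\|f\|$. Then $\Phi'\le\|g\|+CZ\Phi$, and multiplying by the integrating factor $e^{-C\int_0^tZ}$ and integrating from $0$ to $t$ gives the stated bound.
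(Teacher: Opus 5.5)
Your skeleton is the paper's: localize with $\Delta_j$, follow the flow of $v$, apply Minkowski to the mixed norm, bound the commutator by Theorem \ref{prop:commutator-estimates}, use $\|\mathrm{div}\,v\|_{L^\infty}\lesssim Z$, and close with Gronwall. The gap is in returning from Lagrangian to Eulerian norms. With the unweighted Duhamel formula, the factor $a(\tau)=e^{\frac1p\int_0^\tau\|\mathrm{div}\,v\|_{L^\infty}}$ enters twice. It appears once in front, to compare the left side with $y(t)=\|f(t)\|_{X^s_{p,q}}$. It appears again inside the time integral, to compare $\big\|\|2^{js}R_j(\tau,\psi_\tau)\|_{l^q}\big\|_{L^p}$ with the Eulerian quantity that Theorem \ref{prop:commutator-estimates} controls. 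This gives $y(t)\le a(t)\big(y(0)+\int_0^t a(\tau)(\|g(\tau)\|_{X^s_{p,q}}+CZ(\tau)y(\tau))\,d\tau\big)$. Gronwall then puts $C\int_0^tZ(\tau)a(\tau)^2\,d\tau$ in the exponent. Since $\|\mathrm{div}\,v\|_{L^\infty}\le cZ$, this exponent is only bounded by $C'(e^{c'\int_0^tZ}-1)$, which is a double exponential rather than \eqref{eq:a-priori-estimates}. So absorbing the factor loses more than the constant $1$.

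Your energy functional $\int W^p\,dx$, with $W=\|2^{js}\Delta_jf\|_{l^q}$, does repair this for $p,q<\infty$, and is a legitimate alternative there. The transport part is exactly $-v\cdot\nabla(W^p)$, and H\"older in $j$, then in $x$, handles $\Delta_jg+R_j$. But for $F^s_{p,\infty}$ with $p<\infty$ you fall back on ``the characteristic formula and a sup''. That reintroduces the same Jacobian factor, so this case of the statement is not covered. The same holds for the Besov case with $p<\infty$, unless you switch to the per-block $L^p$ energy estimate.

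The missing idea is the paper's: weight the Lagrangian quantity by the Jacobian $J(t,\alpha)=\det\nabla_\alpha X(t,\alpha)$ before integrating in time. Because $\partial_tJ=J\,(\mathrm{div}\,v)(t,X)$, the function $J^{1/p}\Delta_jf(t,X)$ obeys an ODE whose only new term is $\frac1pJ^{1/p}(\mathrm{div}\,v)(t,X)\,\Delta_jf(t,X)$. The weight is independent of $j$, so it passes through every $l^q$ norm, including $q=\infty$. The identity $\int J(t,\alpha)|h(X(t,\alpha))|^p\,d\alpha=\|h\|_{L^p}^p$ makes the change of variables exact at every time. This gives the integral inequality with constant $1$ on $f_0$ and $g$ for all admissible $(p,q)$ at once. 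The extra term $\frac1p\int_0^t\|\mathrm{div}\,v\|_{L^\infty}\|f\|_{X^s_{p,q}}$ is absorbed into $C\int_0^tZ\|f\|_{X^s_{p,q}}$. The argument is pointwise in $\alpha$ and never differentiates a norm; for $p=\infty$ no weight is needed. Note that the paper writes the weight as $J$; with $J$ itself the change of variables is exact only for $p=1$, so $J^{1/p}$ is what the argument requires.

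If you prefer to keep the energy method, you can reach $q=\infty$ another way. Run it on finitely many blocks with finite $q$, let $q\to\infty$, and then let the number of blocks go to infinity.
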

	\begin{theorem}\label{thm:solve-transport}
		(Local well-posedness for transport equations) Let $d,p,q$ and $s$ be as in the statement of Theorem \ref{Thm:a-priori-estimates}. Let $f_0 \in X^s_{p,q}(\mathbb{R}^d)$ and $g\in L^1(0,T;X^s_{p,q}(\mathbb{R}^d))$. Let v be a time dependent vector field with coefficients in $L^\rho(0,T;X^{-M}_{\infty,\infty}(\mathbb{R}^d))$ for some $\rho>1$ and $M>0$, and such that $\nabla v\in L^1(0,T;X^{\frac{d}{p}}_{p,\infty}(\mathbb{R}^d)\cap L^\infty(\mathbb{R}^d))$ if $s<1+\frac{d}{p}$, and $\nabla v\in L^1(0,T;X^{s-1}_{p,q}(\mathbb{R}^d))$ if $s>1+\frac{d}{p}$ or $s=1+\frac{d}{p}$ and $p=1 $ if $X^{s-1}_{p,q}(\mathbb{R}^d)=F^{s-1}_{p,q}(\mathbb{R}^d)$ (resp. $q=1$ if $X^{s-1}_{p,q}(\mathbb{R}^d)=B^{s-1}_{p,q}(\mathbb{R}^d) $). Then the transport equations \eqref{Eq} has a unique solution $f\in L^\infty([0,T];X^s_{p,q}(\mathbb{R}^d))\bigcap(\cap_{s'<s}C([0,T];X^{s'}_{p,1}(\mathbb{R}^d))) $ and the estimates in Theorem \ref{Thm:a-priori-estimates} hold true. If moreover $q<\infty$, then we have $f\in C([0,T];X^s_{p,q}(\mathbb{R}^d))$. Furthermore, the data-to-solution map $f_0\mapsto f$ is continuous from $X^s_{p,q}(\mathbb{R}^d)$ into $L^\infty ([0,T];X^{s}_{p,q}(\mathbb{R}^d))\cap C([0,T];X^{s'}_{p,1}(\mathbb{R}^d))$ for every $s'<s$ if $q=\infty$, and into $C ([0,T];X^{s}_{p,q}(\mathbb{R}^d))$ if $q<\infty$.
	\end{theorem}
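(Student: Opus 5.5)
The proof follows the classical Friedrichs-type scheme used for Besov spaces (as in \cite{MR2768550}), adapted so that every step only uses properties common to $X^s_{p,q}$. I would proceed in three stages.

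\textbf{Step 1: existence for smoothed data.} Regularize the data by setting $f_0^n=S_nf_0$, $g^n=S_ng$ and $v^n=S_nv$. Then $v^n$ is smooth, bounded in $x$, and its coefficients lie in $L^\rho(0,T;C^\infty_b)$ thanks to $v\in L^\rho(0,T;X^{-M}_{\infty,\infty})$ and Bernstein's lemma. The regularized problem is solved by the method of characteristics: the flow of $v^n$ is a $C^1$-in-$x$ diffeomorphism, and $f^n(t,x)=f_0^n(\psi_t^{-1}(x))+\int_0^t g^n(\tau,\psi_{t,\tau}(x))d\tau$. Since $f^n$ belongs to every $X^{\sigma}_{p,q}$, Theorem \ref{Thm:a-priori-estimates} applies to it. Moreover $\|\nabla S_n v\|\lesssim\|\nabla v\|$ uniformly in $n$ in each of the norms used in $Z$, because $S_n$ is uniformly bounded on $X^\sigma_{p,q}$ and on $L^\infty$. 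Hence $(f^n)$ is bounded in $L^\infty(0,T;X^s_{p,q})$ by the right-hand side of \eqref{eq:a-priori-estimates}.

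\textbf{Step 2: compactness and passage to the limit.} From the equation, $\partial_tf^n=g^n-v^n\cdot\nabla f^n$ is bounded in $L^1(0,T;X^{-M'}_{p,\infty})$ for some large $M'$. This uses the product estimate for $v^n\cdot\nabla f^n$ with $v\in X^{-M}_{\infty,\infty}$; here I exploit $\rho>1$ to get equicontinuity in time. A localization in space combined with Ascoli's theorem and the compact embedding of $\phi X^{s}_{p,q}$ into $X^{s'}_{p,1}$ for $s'<s$ then gives a limit $f\in L^\infty(0,T;X^s_{p,q})\cap C([0,T];X^{s'}_{p,1})$ solving \eqref{Eq} in the distributional sense. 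Fatou's property of the $X^s_{p,q}$ norm transfers the estimate. When $q<\infty$, continuity into $X^s_{p,q}$ follows by writing $f=S_Nf+(I-S_N)f$. The low frequencies $S_N f$ are continuous in time. For the high frequencies, I apply the commutator estimate of Theorem \ref{prop:commutator-estimates} to $\Delta_jf$, which gives a uniform-in-time control of the tail $\|\,\|2^{js}\Delta_jf\|_{l^q(j\ge N)}\|_{L^p}$. Dominated convergence (needing $q<\infty$) then shows this tail is small uniformly on $[0,T]$.

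\textbf{Step 3: uniqueness and continuity of the data-to-solution map.} Uniqueness follows by applying the a priori estimate to the difference of two solutions in a lower-regularity space $X^{s-1}_{p,q}$ (or $X^{s'}$). The index must stay above $-d\min(1/p,1/p')$, and I expect this to be the main obstacle. When $s-1$ falls below this threshold, I would instead use duality: apply the estimate to the adjoint transport equation $\partial_t\phi+\textup{div}(v\phi)=0$ in the dual index range. The constraint $s>-d\min(\frac1p,\frac1{p'})$ is symmetric in $p,p'$, which makes this duality argument plausible. Continuity of $f_0\mapsto f$ in the weaker $C([0,T];X^{s'}_{p,1})$ topology follows from linearity and the estimate. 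In $C([0,T];X^s_{p,q})$ for $q<\infty$ I would use the Bona--Smith-style splitting: approximate $f_0$ by $S_Nf_0$, use the uniform tail control from Step 2, and use linearity in the data.
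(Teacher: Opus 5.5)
Your overall architecture matches the paper's proof: regularize by $S_n$, get uniform bounds from Theorem \ref{Thm:a-priori-estimates}, pass to the limit by localized Arzel\`a--Ascoli plus the Fatou property, then obtain time continuity for $q<\infty$ from $f=S_Nf+(I-S_N)f$, a Gronwall estimate on the tail $\|\|2^{js}\Delta_jf\|_{l^q(j\ge N)}\|_{L^p}$, and dominated convergence. Two steps, however, do not work as written.

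In Step 2, the uniform bound on $v^n\cdot\nabla f^n$ cannot come from $v\in X^{-M}_{\infty,\infty}$. The remainder $R(v^n,\nabla f^n)$ is controlled by $\|v^n\|_{X^{-M}_{\infty,\infty}}$ only if $-M+(s-1)>0$, which fails in general. The bound coming from $\nabla v$, on the other hand, is only $L^1$ in time, so it gives no equicontinuity. The missing idea is the interpolation the paper uses. The high frequencies of $v^n$ are bounded in $L^1(0,T;X^{\frac dp+1}_{p,\infty})$, and $v^n$ is bounded in $L^\rho(0,T;X^{-M}_{\infty,\infty})$. Interpolating, $v^n$ is bounded in $L^\alpha(0,T;X^{\frac dp+1-\varepsilon}_{p+\varepsilon,\infty})$ for some $\alpha>1$; this is exactly where $\rho>1$ enters. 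Only then does Lemma \ref{lem:product}, together with $s>-d\min(\frac1p,\frac1{p'})$, put $v^n\cdot\nabla f^n$ in $L^\alpha(0,T;X^{-m}_{p,\infty})$ and give $C^\beta$-in-time equicontinuity. The low frequencies $S_1v^n$ are harmless, being smooth with $L^\rho$-in-time bounds. Note also that localization plus Ascoli only gives $\phi f\in C([0,T];\cdot)$ for $\phi\in C^\infty_c$, not $f\in C([0,T];X^{s'}_{p,1})$. You need a separate global argument. One option is to repeat the tail argument in $X^{s'}_{p,1}$ via $X^s_{p,\infty}\hookrightarrow X^{s'}_{p,1}$, as the paper does. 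Another is to interpolate $f\in C([0,T];X^{-M'}_{p,\infty})$ against $f\in L^\infty(0,T;X^s_{p,q})$.

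The obstacle you anticipate in Step 3 does not exist. Your route (differences in $X^{s-1}_{p,q}$, duality for the adjoint equation, Bona--Smith splitting) is unnecessary and, as it stands, insufficient. Because $v$ is fixed and \eqref{Eq} is linear, the difference $w=f^1-f^2$ of two solutions lies in $L^\infty(0,T;X^s_{p,q})$. It solves \eqref{Eq} with $g=0$ and data $f^1_0-f^2_0$. So Theorem \ref{Thm:a-priori-estimates} applies at the same index $s$, which satisfies the threshold by hypothesis, and gives $\|w(t)\|_{X^s_{p,q}}\le e^{C\int_0^tZ(\tau)d\tau}\|f^1_0-f^2_0\|_{X^s_{p,q}}$. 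This is the paper's argument. It yields uniqueness and Lipschitz dependence in $L^\infty(0,T;X^s_{p,q})$ at once, hence in $C([0,T];X^s_{p,q})$ for $q<\infty$ and in $C([0,T];X^{s'}_{p,1})$ by embedding. Conversely, difference estimates only in $X^{s-1}$ or $X^{s'}$ with $s'<s$ can never give the continuity into $L^\infty(0,T;X^s_{p,\infty})$ that the statement asserts for $q=\infty$. Your Bona--Smith splitting for $q<\infty$ also still needs an $X^s$ bound for the solution with data $(I-S_N)f_0$, which is the same level-$s$ estimate. The duality fallback is unverified and can simply be dropped.
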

	\begin{remark}
		Compared with the related results for $F^s_{p,q}$ in \cite{2601.10071}, our findings (Theorems \ref{Thm:a-priori-estimates} and \ref{thm:solve-transport}) cover not only the case $s>1+\frac{d}{p}$ treated there but also the case $s\leq 1+\frac{d}{p}$. The proof in \cite{2601.10071} relies on the fact that $\|f\|_{F^s_{p,q}}=\|f\|_{L^p}+\|f\|_{\dot{F}^s_{p,q}}$ when $s>0$ and proceeds by working on the space $\dot{F}^s_{p,q}$, which forces the regularity condition 
		$s>0$. In contrast, we work directly on $F^s_{p,q}$ and, by applying Theorem \ref{prop:commutator-estimates}, obtain improved regularity, namely $s>-d\min(\frac{1}{p},\frac{1}{p'})$.
	\end{remark}
	\begin{remark}
		Our objective is not merely to establish an improved result in Triebel-Lizorkin spaces, but to provide a unified perspective. More precisely, using the notation $X^s_{p,q}$, we unify the result obtained in $F^s_{p,q}$ with that in $B^s_{p,q}$ from \cite{MR2768550,MR1827098}. This unified framework suggests that previous results for certain transport type equations, such as the Euler-Poincaré system and Camassa-Holm equation, established in $B^s_{p,q}$, may potentially be adapted to $F^s_{p,q}$, which we will consider below.
	\end{remark}
	Note that our results for transport equations can be applied to the ideal fluids, such as the incompressible Euler equation and the ideal MHD system, which have been studied in \cite{MR1880646,MR2020259,MR2592288,MR4240785,MR951744,2601.10071}. In this paper, as an application, we consider the two-component Euler-Poincaré equations which can be recast in the form of transport equations via a suitable transformation. It is worth pointing out that if one directly applies the transport result from our recent work \cite{2601.10071} to study the Cauchy problem for the two-component Euler-Poincaré system, it is readily seen that the regularity condition $s>2+\frac{d}{p}$ is obtained, which is not the optimal index $1+\frac{d}{p}$. We therefore take this system as an illustration to examine the performance of our transport theory (Theorems \ref{Thm:a-priori-estimates} and \ref{thm:solve-transport}). We aim to investigate the well-posedness and blow-up phenomena for this system. The two-component Euler-Poincar\'{e} system is given as follows:
	\begin{equation}\label{eq}
		\begin{cases}
			\partial_tm+u\cdot\nabla m+(\nabla u)^T\cdot m+m(\text{div}u)+g\rho\nabla\rho=0,\\
			\partial_t\rho+\text{div}(\rho u)=0,\\
			m=(I-\Delta)u.\\
		\end{cases}
	\end{equation}
	where the vector fields $u=u(t,x)$ and $m=m(t,x)$ are defined from $\mathbb{R}^+\times\mathbb{R}^d\to\mathbb{R}^d$, the scalar function $\rho=\rho(t,x)$ is defined from $\mathbb{R}^+\times\mathbb{R}^d\to\mathbb{R}$,  $d\in\mathbb{N}^+$ is the spatial dimensions, and $g>0$ is the downward constant acceleration of gravity. The system \eqref{eq}  was introduced in \cite{MR2552212,MR2904389} as a framework for modeling and analyzing fluid dynamics, particularly in the contexts of nonlinear shallow water waves, geophysical fluids, and turbulence modeling, or alternatively, as a reformulation of geodesic flow on the diffeomorphism group. The Cauchy problem for system \eqref{eq} was studied in \cite{MR3255472}, where the local well-posedness, blow-up criteria in certain Sobolev spaces, and some radially symmetric finite-time blow-up solutions were investigated. Moreover, system \eqref{eq} is closely related to the following incompressible $\alpha$-Euler equation in fluid mechanics:
	\begin{equation}\label{eq:equation68}
		\begin{cases}
			\partial_tm+u\cdot\nabla m+(\nabla u)^T\cdot m =-\nabla P,\\
			\textup{div}u=0,\\
			m=(1-\alpha^2\Delta)u.
		\end{cases}
	\end{equation}
	For $\alpha\to0$, system \eqref{eq:equation68} converges to the classical Euler equation \cite{MR1627802}.
	
	When the system is decoupled (i.e., formally, $\rho\equiv0$), system \eqref{eq} reduces to the Euler-Poincar\'{e} equations associated with the diffeomorphism group in \cite{MR2103008}, that is
	\begin{equation}\label{eq:equation65}
		\partial_tm+u\cdot\nabla m+(\nabla u)^T\cdot m+m(\text{div}u)=0\quad \text{with}\quad m=(I-\Delta)u,
	\end{equation}
	which was first proposed exactly in the way that a class of its singular solutions generalize the peakon solutions of the classical Camassa-Holm equation to higher spatial dimensions \cite{MR2031278}. The local well-posedness, global existence, and blow-up of solutions of the Cauchy problem for \eqref{eq:equation65} have been discussed in \cite{MR2964772,MR3116009,MR3277196}. 
	
	In particular, system \eqref{eq} becomes the following completely integrable two-component Camassa-Holm (2CH) shallow water system as $d=1$:
	\begin{equation}\label{eq:equation66}
		\begin{cases}
			m_t+um_x+2u_xm+g\rho\rho_x=0,\\
			\rho_t+(u\rho)_x=0,\\
			m=(1-\partial_x^2)u.
		\end{cases}
	\end{equation}
	The 2CH system in \eqref{eq:equation66} was derived in the context of shallow water theory \cite{MR2474608} from
	the Green-Naghdi equations or incompressible Euler equation with constant vorticity
	\cite{MR2598636}. Some studies on the Cauchy problem for system \eqref{eq:equation66} can be seen in \cite{MR2474608,MR3005543,MR2609545}, for instance. 
	
	In the case that $d=1$ and $\rho\equiv0$, system \eqref{eq} becomes the celebrated Camassa-Holm (CH) equation as follows:
	\begin{equation}\label{eq:equation67}
		u_t-u_{xxt}+3uu_x=2u_xu_{xx}+uu_{xxx},
	\end{equation}
	which is proposed as a bi-Hamiltonian equation \cite{MR636470} and models the unidirectional propagation of shallow water waves over a flat bottom \cite{MR1234453} (see also a rigorous justification in shallow water \cite{MR2481064}). Its solitary waves are peaked solitons (peakons) \cite{MR1234453} (see also \cite{MR2257390,MR2753609}
	 for the waves of great height in irrotational water waves), and they are orbitally stable \cite{MR1854962,MR1737505}. The Cauchy problem for the CH equation have been studied extensively. Its local well-posedness has been shown in \cite{MR1604278,MR1827098,MR1990847}. Moreover, it has both global strong solutions and blow-up solutions within finite time \cite{MR1775353,MR1631589,MR1668586,MR1604278} which is featured as wave breaking \cite{MR1668586} (namely, the wave
	remains bounded while its slope becomes unbounded infinite time \cite{MR483954}). In addition, it possesses global weak solutions; see the discussions in \cite{MR2278406,MR4108918,MR1773414}, for instance. 
	
	Now, let us set up the Cauchy problem for system \eqref{eq}. Set $\gamma=\rho-1$ and $g=1$ for convenience. According to \cite{MR4405195}, we can transform system \eqref{eq} into the following nonlocal transport form:
	\begin{equation}\label{transport-system}
		\begin{cases}
			\partial_tu+u\cdot\nabla  u=F_1(u,\gamma),\quad&(t,x)\in\mathbb{R}^+\times \mathbb{R}^d,\\
			\partial_t\gamma+u\cdot\nabla  \gamma=F_2(u,\gamma),\quad&(t,x)\in\mathbb{R}^+\times \mathbb{R}^d,\\
			u(0,x)=u_0(x),\quad &\ \ \quad x\in\mathbb{R}^d,\\
			\gamma(0,x)=\gamma_0(x)=\rho_0(x)-1,\quad&\ \ \quad x\in\mathbb{R}^d,
		\end{cases}
	\end{equation}
	where
	\begin{align}
		F_1(u,\gamma)\triangleq&-(I-\Delta)^{-1}\text{div}\Big(\nabla u\nabla u+\nabla u\nabla u^T-\nabla u^T\nabla u-\nabla u(\text{div}u)+\frac{1}{2}|\nabla u|^2I+\frac{1}{2}\gamma^2I+\gamma I\Big)\nonumber\\
		&-(I-\Delta)^{-1}\Big(u(\text{div}u)+\nabla u^T\cdot u\Big),\label{eq:F1}
	\end{align}
	and
	\begin{equation}\label{eq:F2}
		F_2(u,\gamma)\triangleq-\gamma\textup{div}u-\text{div}u.
	\end{equation}
	Recall that the well-posedness and blow-up criterion for \eqref{transport-system} in Besov spaces have been established in \cite{MR4405195}. However, their proof does not extend directly to the Triebel-Lizorkin setting, and we therefore consider only this case here. To this end, we define some functional spaces.
	\begin{definition}
		For $T>0,s\in\mathbb{R}$, and $(p,q)\in[1,\infty)\times[1,\infty]$ or $p=q=\infty$, we define the space $E^s_{p,q}(T)$ as follows:
		\[ 
		E^s_{p,q}(T)\triangleq
		\begin{cases}
			C([0,T];F^s_{p,q}(\mathbb{R}^d))\cap C^1([0,T];F^{s-1}_{p,q}(\mathbb{R}^d)),\quad\text{if}\quad q<\infty,\\
			L^\infty(0,T;F^s_{p,\infty}(\mathbb{R}^d))\cap \textup{Lip}(0,T;F^{s-1}_{p,\infty}(\mathbb{R}^d)),\quad\text{if}\quad q=\infty.
		\end{cases} 
		\]
	\end{definition} 
	We are now in a position to state our well-posedness and blow-up criterion results for \eqref{transport-system} as follows.
	\begin{theorem}\label{thm:well-poseness}
		Let $d\in\mathbb{N}^+$, $(p,q)\in[1,\infty)\times[1,\infty]$ or $p=q=\infty$, and $s>\max(1+\frac{d}{p},\frac{3}{2})$ (or $s=1+d$ with $p=1$ and $d\geq 2$). Suppose that $(u_0,\gamma_0)\in F^{s}_{p,q}(\mathbb{R}^d)\times F^{s-1}_{p,q}(\mathbb{R}^d)$. Then there exists a time $T>0$ such that  $(u,\gamma)\in E^{s}_{p,q}(T)\times E^{s-1}_{p,q}(T)$ is the unique solution to system \eqref{transport-system}, and the solution depends continuously on the initial data, that is, the mapping $(u_0,\gamma_0)\mapsto (u,\gamma)$ is continue from $D^{s}(R)\times D^{s-1}(R)$ into $E^{s'}_{p,q}(T)\times E^{s'-1}_{p,q}(T)$ for all $s'<s$ if $q=\infty$, and $s'=s$ otherwise, where $D^s(R)\triangleq \{f\in F^s_{p,q}(\mathbb{R}^d):\|f\|_{F^s_{p,q}}\leq R\}$.  
	\end{theorem}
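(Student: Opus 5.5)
The plan is the standard Friedrichs/iteration scheme built on Theorems \ref{Thm:a-priori-estimates} and \ref{thm:solve-transport}. First, I construct approximate solutions $(u^{n},\gamma^{n})$ inductively. Set $(u^0,\gamma^0)=(0,0)$, and let $(u^{n+1},\gamma^{n+1})$ solve the linear transport problems
\[ \partial_t u^{n+1}+u^n\cdot\nabla u^{n+1}=F_1(u^n,\gamma^n),\qquad \partial_t \gamma^{n+1}+u^n\cdot\nabla \gamma^{n+1}=F_2(u^n,\gamma^n), \]
with mollified data $S_{n+1}u_0$ and $S_{n+1}\gamma_0$. Theorem \ref{thm:solve-transport} gives existence of each iterate, applied with $s$ for $u$ and $s-1$ for $\gamma$. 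The index ranges are exactly what make the velocity hypotheses hold. We have $\nabla u^n\in F^{s-1}_{p,q}$ with $s-1>\frac dp$, so $F^{s-1}_{p,q}\hookrightarrow L^\infty\cap F^{d/p}_{p,\infty}$. The $u$-equation at level $s>1+\frac dp$ then uses $Z=\|\nabla u^n\|_{F^{s-1}_{p,q}}$. The $\gamma$-equation at level $s-1<1+\frac dp$ (if $s<2+\frac dp$) uses $Z=\|\nabla u^n\|_{F^{d/p}_{p,\infty}\cap L^\infty}$, which is again controlled by $\|u^n\|_{F^s_{p,q}}$. When $s-1\ge 1+\frac dp$ it uses the other branch. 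The condition $s>\frac32$ is there because $s-1>\frac12\ge -d\min(\frac1p,\frac1{p'})$, and it keeps the product estimates for $\gamma$ valid in $d=1$. The endpoint $s=1+d$, $p=1$ is precisely the case $s=1+\frac dp$, $p=1$ allowed in Theorem \ref{prop:commutator-estimates}(ii).

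Second, I prove uniform bounds. I need $\|F_1(u,\gamma)\|_{F^s_{p,q}}\lesssim (\|u\|_{F^s_{p,q}}+\|\gamma\|_{F^{s-1}_{p,q}}+1)(\|u\|_{F^s_{p,q}}+\|\gamma\|_{F^{s-1}_{p,q}})$. Since $(I-\Delta)^{-1}\mathrm{div}$ gains one derivative, this needs the products $\nabla u\nabla u$ and $\gamma^2$ in $F^{s-1}_{p,q}$. These follow from the algebra property of $F^{s-1}_{p,q}\cap L^\infty$ in Lemma \ref{lem:Triebel-Lizorkin-properties}, using $s-1>\frac dp$. The linear term $\gamma I$ is harmless. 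I also need $\|F_2\|_{F^{s-1}_{p,q}}\lesssim(1+\|\gamma\|_{F^{s-1}_{p,q}})\|u\|_{F^s_{p,q}}$, which is the same product estimate. Plugging these into \eqref{eq:a-priori-estimates} and arguing by induction gives a time $T\sim 1/(C(1+\|u_0\|+\|\gamma_0\|))$ with $\|u^n\|_{L^\infty_TF^s}+\|\gamma^n\|_{L^\infty_TF^{s-1}}\le 2(\|u_0\|+\|\gamma_0\|)$. The equations then give uniform bounds on the time derivatives in $F^{s-1}$ and $F^{s-2}$.

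Third, I show the iterates converge in the lower norm $F^{s-1}_{p,q}\times F^{s-2}_{p,q}$. Subtracting consecutive equations, $\delta u=u^{n+1}-u^n$ solves a transport equation with field $u^n$ and source $-(u^n-u^{n-1})\cdot\nabla u^n+F_1(u^n,\gamma^n)-F_1(u^{n-1},\gamma^{n-1})$. The analogous equation holds for $\delta\gamma$. This is the main obstacle. The difference $\delta\gamma$ lives at regularity $s-2$, which may be negative or close to the endpoint. There the product $(u^n-u^{n-1})\cdot\nabla\gamma^n$ in $F^{s-2}$ and the applicability of Theorem \ref{Thm:a-priori-estimates} at index $s-2$ need care. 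The lower bound $s-2>-d\min(\frac1p,\frac1{p'})$ is where the improved commutator range is essential. For the product I would try paraproduct estimates of the type $\|ab\|_{F^{s-2}}\lesssim\|a\|_{F^{s-1}}\|b\|_{F^{s-2}}$, valid for $s-1>\frac dp$ and $s-2+s-1>d\max(0,\frac1p-1)$. In the delicate low-dimensional cases I may fall back on interpolation with the uniform high norm. Gronwall then yields a Cauchy sequence.

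Finally, I pass to the limit using Fatou and interpolation. The limit lies in $L^\infty_T(F^s\times F^{s-1})$, and Theorem \ref{thm:solve-transport} upgrades this to $C_T$ when $q<\infty$. Uniqueness follows from the same difference estimate. Continuous dependence, for $s'<s$, follows from the difference estimate plus interpolation. For $s'=s$ with $q<\infty$ I would use the Bona–Smith argument: compare with solutions having data $S_n u_0$, use the uniform-in-data convergence of $\|(I-S_n)u_0\|_{F^s}$ on compact sets, and apply the a priori estimates at level $s+1$ for the smoothed solutions.
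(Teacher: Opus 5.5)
Your outline follows the paper's proof step for step: the same iteration with data $S_{n+1}u_0,S_{n+1}\gamma_0$, uniform bounds from Theorem \ref{Thm:a-priori-estimates} plus lifting/algebra estimates for $F_1,F_2$, a Cauchy argument in $F^{s-1}_{p,q}\times F^{s-2}_{p,q}$, Fatou and interpolation, and a Bona--Smith comparison with smoothed data. (The paper does the last step by interpolating between $F^{s-1}$ and $F^{s+1}$ and using Lemma \ref{lem:Snguji}.)

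The weak point is exactly the one you flag, and your proposed resolution does not close it. Since $u^n$ is not divergence free, applying Theorem \ref{Thm:a-priori-estimates} to the $\gamma$-difference at index $s-2$ requires $s-2>-d\min(\frac1p,\frac1{p'})$. The hypotheses do not imply this. For $p=q=\infty$ it means $s>2$. For $2d<p<\infty$ it means $s>2-\frac dp$, which exceeds $\max(1+\frac dp,\frac32)$. For example, $d=1$, $p=4$, $s=1.6$ is admitted by the theorem but lies outside the range of Theorem \ref{Thm:a-priori-estimates}. So the improved commutator range is not what saves this step.

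Interpolation with the uniform high norm does not help either. The source contains $(u^n-u^{n-1})\cdot\nabla\gamma^n$ with $\nabla\gamma^n$ only in $F^{s-2}_{p,q}$. Hence the $\gamma$-difference must be estimated at index $s-2$ or lower, and lowering the index only worsens the range condition. The same issue affects your uniqueness and continuity steps. Note that the paper's written proof invokes Theorem \ref{Thm:a-priori-estimates} at precisely this point (``similar to the proof of \eqref{eq:equation54}'', and again for uniqueness) without checking this condition. So this gap is shared with the paper rather than a departure from it.

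What is missing is a commutator estimate at index $\sigma=s-2$ that uses the full regularity $\nabla u\in F^{s-1}_{p,q}$ of the transporting field. In the proof of Theorem \ref{prop:commutator-estimates}, the lower bound on the index enters only through the remainders $I_2,I_3$.
\begin{itemize}
\item $I_2$ is harmless here: for $p\ge2$ it only needs $s-2>-1-\frac dp$, which always holds.
\item For $I_3=\Delta_jR(\operatorname{div}\tilde u,\theta)$ with $p\ge2$, you can pass through $F^{2s-3}_{p/2,\infty}$, using $\operatorname{div}\tilde u\in F^{s-1}_{p,\infty}$ instead of $F^{\frac dp}_{p,\infty}$. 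Lemma \ref{lem:Nik2} then needs only $2s-3>0$. The embedding back into $F^{s-2}_{p,q}$ needs $s-1\ge\frac dp$.
\end{itemize}
This is where $s>\frac32$ genuinely enters. Your justification via $s-1>\frac12$ is not the real reason, since the $\gamma$-equation at level $s-1>\frac dp$ needs nothing more.

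Two smaller corrections. First, the Morse-type condition in Lemmas \ref{lem:product}--\ref{lem:product-1} is $2s-3>d\max(0,\frac2p-1)$, not with $\frac1p$. At $s=1+d$, $p=1$ it reads $2d-1>d$, which is why $d\ge2$ is needed (Remark \ref{remark:product-critical}). Second, at the equality $s-1=1+\frac dp$ with $p>1$, neither branch of Theorem \ref{Thm:a-priori-estimates} covers the $\gamma$-equation, contrary to your ``other branch''. There you should use \eqref{eq:commutator-estimates5} with $f=\gamma\in F^{s-1}_{p,q}\hookrightarrow L^\infty$, as the paper does in the proof of Lemma \ref{thm:blow-up}.
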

	\begin{theorem}\label{thm:Blow-up-criterion-2}
		Let $d\in\mathbb{N}^+$, and $(p,q)\in[1,\infty)\times[1,\infty]$ or $p=q=\infty$. Suppose that $(u_0,\gamma_0)\in F^s_{p,q}(\mathbb{R}^d)\times F^{s-1}_{p,q}(\mathbb{R}^d)$ with $s>\max(1+\frac{d}{p},\frac{3}{2})$ (or $s=1+d$ with $p=1$ and $d\geq 2$) and $(u,\gamma)$ is the corresponding solution to system \eqref{transport-system}. If the solution $(u,\gamma)$ blows up in finite time (i.e. the lifespan of solution $T^\star<\infty$), then
		\begin{equation}\label{eq:equation69}
			\int_{0}^{T^\star}\|\nabla u(\tau)\|_{L^\infty}d\tau=\infty.
		\end{equation}
	\end{theorem}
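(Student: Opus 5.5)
We argue by contradiction and continuation, in the standard way for blow-up criteria. Suppose $T^\star<\infty$ but $\int_0^{T^\star}\|\nabla u\|_{L^\infty}d\tau<\infty$. We will show that $\|u(t)\|_{F^s_{p,q}}+\|\gamma(t)\|_{F^{s-1}_{p,q}}$ stays bounded on $[0,T^\star)$. Since the existence time in Theorem \ref{thm:well-poseness} depends only on the size of the data in $F^s_{p,q}\times F^{s-1}_{p,q}$, restarting the solution at some $T^\star-\varepsilon$ with $\varepsilon$ small then extends it beyond $T^\star$. This contradicts the maximality of $T^\star$.

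\textbf{Step 1: control of $u$.} Apply Theorem \ref{Thm:a-priori-estimates} to the $u$-equation in \eqref{transport-system}, using the special case $f=v=u$. This case is valid for all $s>0$ and gives
\[ \|u(t)\|_{F^s_{p,q}}\le \|u_0\|_{F^s_{p,q}}+\int_0^t\|F_1\|_{F^s_{p,q}}d\tau+C\int_0^t\|\nabla u\|_{L^\infty}\|u\|_{F^s_{p,q}}d\tau. \]
For $F_1$, use that $(I-\Delta)^{-1}\mathrm{div}$ maps $F^{s-1}_{p,q}$ to $F^s_{p,q}$ and $(I-\Delta)^{-1}$ maps $F^{s-2}_{p,q}$ to $F^s_{p,q}$. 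The quadratic terms are then handled by the Moser-type product estimate in $F^{s-1}_{p,q}$ (part of Lemma \ref{lem:Triebel-Lizorkin-properties}; $s-1>0$ here):
\[ \|\nabla u\nabla u\|_{F^{s-1}_{p,q}}\lesssim \|\nabla u\|_{L^\infty}\|u\|_{F^s_{p,q}}, \qquad \|\gamma^2\|_{F^{s-1}_{p,q}}\lesssim\|\gamma\|_{L^\infty}\|\gamma\|_{F^{s-1}_{p,q}}. \]
The linear term $\gamma I$ contributes $\|\gamma\|_{F^{s-1}_{p,q}}$, and the term $u\,\mathrm{div}u+\nabla u^Tu$ is bounded in $F^{s-1}_{p,q}$ by $(\|u\|_{L^\infty}+\|\nabla u\|_{L^\infty})\|u\|_{F^s_{p,q}}$.

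\textbf{Step 2: control of $\gamma$.} Apply Theorem \ref{Thm:a-priori-estimates} to the $\gamma$-equation at regularity $s-1$, with $f=\gamma$ and $v=u$. The hypothesis $s>\max(1+\frac dp,\frac32)$ places $s-1$ in the range $s-1<1+\frac dp$ or $s-1\ge 1+\frac dp$, depending on $s$.
\begin{itemize}
\item In the first range, $Z=\|\nabla u\|_{F^{d/p}_{p,\infty}\cap L^\infty}$, which is not controlled by $\|\nabla u\|_{L^\infty}$ alone.
\item In the second range, $Z=\|\nabla u\|_{F^{s-2}_{p,q}}\lesssim\|u\|_{F^s_{p,q}}$.
\end{itemize}
The crucial point is therefore not to use $Z$ directly. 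Instead, redo the commutator step using \eqref{eq:commutator-estimates5} with regularity $s-1>0$:
\[ \big\|\|2^{j(s-1)}[u,\Delta_j]\cdot\nabla\gamma\|_{l^q}\big\|_{L^p}\lesssim \|\nabla u\|_{L^\infty}\|\gamma\|_{F^{s-1}_{p,q}}+\|\gamma\|_{L^\infty}\|\nabla u\|_{F^{s-1}_{p,q}}. \]
The source $F_2=-\gamma\,\mathrm{div}u-\mathrm{div}u$ is bounded in $F^{s-1}_{p,q}$ by $(1+\|\gamma\|_{L^\infty})\|u\|_{F^s_{p,q}}+\|\nabla u\|_{L^\infty}\|\gamma\|_{F^{s-1}_{p,q}}$.

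\textbf{Step 3: closing the estimate.} This is the main obstacle: the coupled system still involves $\|u\|_{L^\infty}$ and $\|\gamma\|_{L^\infty}$. We first get these from lower-order information.
\begin{itemize}
\item The $\gamma$-equation along the flow of $u$ gives $\|\gamma(t)\|_{L^\infty}\le (1+\|\gamma_0\|_{L^\infty})e^{\int_0^t\|\mathrm{div}u\|_{L^\infty}}$, which is bounded under the contrary hypothesis.
\item An $L^p$ (or energy-type) estimate for $u$ from the $u$-equation, using the $L^\infty$ bound on $\gamma$, gives a bound on $\|u\|_{L^\infty}$. Embedding-wise, $\|u\|_{L^\infty}\lesssim\|u\|_{L^p}^{\theta}\|\nabla u\|_{L^\infty}^{1-\theta}$, or we simply bound $\|u\|_{L^\infty}$ via the transport equation itself: $\|u(t)\|_{L^\infty}\le\|u_0\|_{L^\infty}+\int_0^t\|F_1\|_{L^\infty}$, with the kernel of $(I-\Delta)^{-1}\mathrm{div}$ integrable.
\end{itemize}
Adding the two estimates then gives, for $Y=\|u\|_{F^s_{p,q}}+\|\gamma\|_{F^{s-1}_{p,q}}$,
\[ Y(t)\le Y(0)+C\int_0^t(1+\|\nabla u\|_{L^\infty})Y\,d\tau. \]
Gronwall's inequality now yields boundedness of $Y$ on $[0,T^\star)$, which completes the contradiction argument.

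In the endpoint case $s=1+d$, $p=1$, the same scheme applies, with the product estimates justified by $F^{d}_{1,q}\hookrightarrow L^\infty$.
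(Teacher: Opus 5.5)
Your Steps 1 and 2 match the paper's Lemma \ref{thm:blow-up}. You use the same commutator bounds (\eqref{eq:commutator-estimates3} for $[u,\Delta_j]\cdot\nabla u$, and \eqref{eq:commutator-estimates5} at regularity $s-1$ for $[u,\Delta_j]\cdot\nabla\gamma$) and the same Moser bounds for $F_1,F_2$. Together these give a Gronwall inequality for $Y$ with coefficient $1+\|u\|_{L^\infty}+\|\nabla u\|_{L^\infty}+\|\gamma\|_{L^\infty}$. Your bound $\|\gamma(t)\|_{L^\infty}\le(1+\|\gamma_0\|_{L^\infty})e^{\int_0^t\|\mathrm{div}u\|_{L^\infty}d\tau}$ is correct, since $\frac{d}{dt}\gamma=-(1+\gamma)\,\mathrm{div}u$ along trajectories. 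It is also simpler than the paper's $L^p$ argument for $\gamma$. The gap is the control of $\|u\|_{L^\infty}$. Your final inequality $Y(t)\le Y(0)+C\int_0^t(1+\|\nabla u\|_{L^\infty})Y\,d\tau$ needs at least $\int_0^{T^\star}\|u\|_{L^\infty}d\tau<\infty$, and neither route you sketch delivers it.

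\textbf{The $L^\infty$ route fails.} The integrable kernel of $(I-\Delta)^{-1}\mathrm{div}$ only gives $\|F_1\|_{L^\infty}\lesssim\|\nabla u\|_{L^\infty}^2+\|u\|_{L^\infty}\|\nabla u\|_{L^\infty}+\|\gamma\|_{L^\infty}^2+\|\gamma\|_{L^\infty}$. The contrary hypothesis $\int_0^{T^\star}\|\nabla u\|_{L^\infty}d\tau<\infty$ says nothing about $\int_0^{T^\star}\|\nabla u\|_{L^\infty}^2d\tau$.

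\textbf{The $L^p$ route does not close as stated.} You propose an estimate for $u$ alone, fed by the $L^\infty$ bound on $\gamma$. But $\|F_1\|_{L^p}$ contains $(I-\Delta)^{-1}\mathrm{div}(\nabla u\nabla u)$ and $(I-\Delta)^{-1}\mathrm{div}(\gamma I)$. These cost $\|\nabla u\|_{L^\infty}\|\nabla u\|_{L^p}$ and $\|\gamma\|_{L^p}$, which your estimate does not control.

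\textbf{The missing idea} is to propagate $\nabla u$ as well, as the paper does in \eqref{applying-nabla-transport-equation}--\eqref{eq:equation72}. Differentiate the $u$-equation and use that $\nabla(I-\Delta)^{-1}\mathrm{div}$ is bounded on $L^r$ for $1<r<\infty$. Then $W=\|u\|_{L^r}+\|\nabla u\|_{L^r}+\|\gamma\|_{L^r}$ satisfies $W'\lesssim(1+\|\nabla u\|_{L^\infty}+\|\gamma\|_{L^\infty})W$; the $\gamma$-equation contributes $\|\mathrm{div}u\|_{L^r}\lesssim\|\nabla u\|_{L^r}$. Fix one finite $r$ with $u_0,\nabla u_0,\gamma_0\in L^r$ (available when $p<\infty$). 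Then $W$ stays bounded on $[0,T^\star)$. Your interpolation inequality $\|u\|_{L^\infty}\lesssim\|u\|_{L^r}^{\theta}\|\nabla u\|_{L^\infty}^{1-\theta}$ then gives $\int_0^{T^\star}\|u\|_{L^\infty}d\tau<\infty$. This closes Step 3 with $r$-dependent constants, so you need no limit in the exponent.
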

	\begin{remark}
		The blow-up criterion \eqref{eq:equation69} only involves $\nabla u$, which is independent of the second density component $\gamma$ or $\rho$. Moreover, our results Theorems \ref{thm:well-poseness}-\ref{thm:Blow-up-criterion-2} in both sub-critical and critical Triebel-Lizorkin spaces recover the corresponding results in H\"{o}lder spaces $\mathcal{C}^r$ and Sobolev spaces $W^{s,p}$, and hence extend the related results in \cite{MR2964772,MR1604278,MR1827098,MR3255472,MR2609545,MR3116009,MR4405195,MR3277196}.
	\end{remark}
	\begin{remark}
		The constructions of global data and blow-up data for system \eqref{transport-system} in Besov spaces, as carried out in \cite{MR4405195}, can be carried over to the Triebel-Lizorkin setting with the necessary modifications. For brevity, we omit the details here. Nevertheless, it is worth pointing out that, by arguments analogous to those used in the proofs of Theorems \ref{thm:well-poseness} and \ref{thm:Blow-up-criterion-2}, one can recover the corresponding results in Besov spaces by virtue of the general theory (Theorems \ref{Thm:a-priori-estimates} and \ref{thm:solve-transport}) established above. In contrast, the proof of the Besov-space results in \cite{MR4405195} does not directly apply to Theorems \ref{thm:well-poseness} and \ref{thm:Blow-up-criterion-2}, i.e., to the Triebel-Lizorkin case.
	\end{remark}
	\begin{remark}
		In Theorem \ref{thm:well-poseness}, the one-dimensional critical case, i.e., 
		$s=2,p=1$, cannot be directly handled by the transport theory developed in this paper, as can be seen from Remark \ref{remark:product-critical} for details. This case requires  more delicate estimates and will be addressed in a forthcoming paper.
	\end{remark}
	The remainder of the paper is organized as follows. In Section 2, we recall some facts on the Littlewood-Paley analysis, Triebel-Lizorkin and Besov spaces,  Nikol'skij representations, Hardy-Littlewood maximal functions,
	and Morse type inequalities as well. In Section 3, we derive the commutator estimates (Theorem \ref{prop:commutator-estimates}). In Sections 4 and 5, we establish a priori estimates (Theorem \ref{Thm:a-priori-estimates}) and the local well-posedness result (Theorem \ref{thm:solve-transport}) for the transport equations \eqref{Eq}, respectively. In Section 6, we apply Theorems \ref{Thm:a-priori-estimates} and \ref{thm:solve-transport} from Sections 4 and 5 to the two-component Euler-Poincaré system \eqref{transport-system},
	yielding its local well-posedness (Theorem \ref{thm:well-poseness}). Finally, in Section 7, we give the proof of blow-up criterion (Theorem \ref{thm:Blow-up-criterion-2}).

	\section{Preliminaries}\label{section:2}
	In this section, we provide some basic facts concerning the Littlewood-Paley decomposition, Triebel-Lizorkin spaces, Besov spaces and their useful properties. First, we introduce some notation related to the theory of Littlewood-Paley decomposition.
	
	Let $\mathscr{B}=\{\xi\in\mathbb{R}^d\colon|\xi|\leq \frac{4}{3} \}$ and $\mathscr{C}=\{\xi\in\mathbb{R}^d\colon\frac{3}{4}\leq |\xi|\leq \frac{8}{3} \}$, $(\varphi,\chi)$ be a couple of smooth functions valued in $[0,1]$, such that $\varphi$ is support in  $\mathscr{C}$, $\chi$ is supported in $\mathscr{B}$ and 
	\[ \forall \xi \in \mathbb{R}^d,\quad \chi(\xi)+\sum_{j\in\mathbb{N}}\varphi(2^{-j}\xi)=1. \]
	We denote $\varphi_j(\xi)=\varphi(2^{-j}\xi),h=\mathscr{F}^{-1}\varphi$ and $\tilde{h}=\mathscr{F}^{-1}\chi$. For $f\in\mathscr{S}'(\mathbb{R}^d)$, one can define nonhomogeneous dyadic blocks as follow:
	\begin{equation}
		\Delta_jf\triangleq0,\quad\text{if}\quad j\leq -2,
	\end{equation}
	\begin{equation}
		\Delta_{-1}f\triangleq\chi(D)f=\tilde{h}*f,
	\end{equation}
	\begin{equation}\label{def:Delta_j}
		\Delta_jf\triangleq\varphi(2^{-j}D)f=2^{jd}\int_{\mathbb{R}^d}h(2^jy)f(x-y)dy,\quad\text{if}\quad  j\geq 0,
	\end{equation}
	\begin{equation}\label{def:S_j}
		S_jf\triangleq\sum_{k\leq j-1}\Delta_kf=\chi(2^{-j}D)f=2^{jd}\int_{\mathbb{R}^d}\tilde{h}(2^jy)f(x-y)dy.
	\end{equation}
	It is easy to see $\Delta_j=S_j-S_{j-1}$. For all $f,g\in \mathscr{S}'(\mathbb{R}^d)$, we observe
	\begin{equation}\label{a.o.c}
		\Delta_j\Delta_kf\equiv 0\quad \text{if} \quad |j-k|\geq 2\quad\text{and}\quad\Delta_j(\Delta_{k-1}f\Delta_kg)\equiv0\quad\text{if}\quad k\leq j-1,
	\end{equation}
	\begin{equation}\label{a.o.c-1}
		S_j\Delta_kf\equiv0\quad\text{if}\quad j\leq k-1\quad\text{and}\quad \Delta_j(S_{k-1}f\Delta_kg)\equiv 0\quad \text{if} \quad |j-k|\geq 5.
	\end{equation}
	The homogeneous dyadic blocks $\dot{\Delta}_j$ and the homogeneous low-frequency cut
	off operators $\dot{S}_j$ are defined for all $j\in\mathbb{Z}$ by
	\begin{equation}\label{def:dot-Delta_j}
		\dot{\Delta}_jf\triangleq\varphi(2^{-j}D)f=2^{jd}\int_{\mathbb{R}^d}h(2^jy)f(x-y)dy,
	\end{equation}
	\begin{equation}\label{def:dot-S_j}
		\dot{S}_jf\triangleq\chi(2^{-j}D)f=2^{jd}\int_{\mathbb{R}^d}\tilde{h}(2^jy)f(x-y)dy.
	\end{equation}

	\begin{remark}
		For each $j\in\mathbb{Z}$, we have $\|\Delta_jf\|_{L^p},\|S_jf\|_{L^p}\leq C\|f\|_{L^p}$ for some positive constant C independent of $j$.
	\end{remark}
		\begin{lemma}\label{lem:Bernstein}
		\textup{\cite{MR2768550}}(Bernstein's inequalities) Let $\mathcal{B}$ be a ball and $\mathcal{C}$ an annulus. A constant $C>0$ exists such that for any $k\in\mathbb{N},1\leq p\leq q\leq \infty$, and any function $f\in L^p(\mathbb{R}^d)$, we have
		\[ \text{Supp}\hat{f}\subset\lambda\mathcal{B}\Rightarrow\|D^kf\|_{L^q}\triangleq\sup_{|\alpha|=k}\|\partial^\alpha f\|_{L^q}\leq C^{k+1}\lambda  ^{k+d(\frac{1}{p}-\frac{1}{q})}\|f\|_{L^p}, \] 
		\[ \text{Supp}\hat{f}\subset\lambda\mathcal{C}\Rightarrow C^{-k-1}\lambda^k\|f\|_{L^p}\leq\|D^kf\|_{L^p}\leq C^{k+1}\lambda  ^{k}\|f\|_{L^p}. \] 
	\end{lemma}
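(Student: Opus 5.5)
The statement immediately above is Bernstein's lemma (Lemma \ref{lem:Bernstein}), which the paper quotes from \cite{MR2768550}. The plan is the standard scaling argument: reduce to $\lambda=1$, write $f$ as a convolution with a fixed Schwartz kernel, and apply Young's inequality.

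\textbf{Reduction to $\lambda=1$.} If $\operatorname{supp}\hat f\subset\lambda\mathcal{B}$, then $f_\lambda(x)\triangleq f(x/\lambda)$ has $\operatorname{supp}\widehat{f_\lambda}\subset\mathcal{B}$. Under this dilation,
\[ \|\partial^\alpha f_\lambda\|_{L^q}=\lambda^{-|\alpha|+d/q}\|\partial^\alpha f\|_{L^q},\qquad \|f_\lambda\|_{L^p}=\lambda^{d/p}\|f\|_{L^p}. \]
So the inequality for $f_\lambda$ at $\lambda=1$ gives exactly the factor $\lambda^{k+d(1/p-1/q)}$ for $f$. The same scaling handles the annulus case.

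\textbf{Ball case.} Fix $\phi\in\mathscr{S}$ with $\hat\phi\equiv1$ on a neighbourhood of $\mathcal{B}$. Then $f=\phi*f$, so $\partial^\alpha f=(\partial^\alpha\phi)*f$. Choose $r$ with $1+\frac1q=\frac1r+\frac1p$; this is possible since $p\le q$. Young's inequality gives
\[ \|\partial^\alpha f\|_{L^q}\le\|\partial^\alpha\phi\|_{L^r}\|f\|_{L^p}. \]
It then remains to bound $\|\partial^\alpha\phi\|_{L^r}\le C^{k+1}$ uniformly in $|\alpha|=k$. Since $\|g\|_{L^r}\le\|g\|_{L^1}^{1/r}\|g\|_{L^\infty}^{1-1/r}$, it suffices to bound $\|\partial^\alpha\phi\|_{L^1}$ and $\|\partial^\alpha\phi\|_{L^\infty}$. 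Write
\[ \|\partial^\alpha\phi\|_{L^1}\lesssim\big\|(1+|x|^2)^{d}\,\partial^\alpha\phi\big\|_{L^\infty}\le C\,\big\|(I-\Delta_\xi)^{d}\big(\xi^\alpha\hat\phi\big)\big\|_{L^1_\xi}, \]
and similarly $\|\partial^\alpha\phi\|_{L^\infty}\le\|\xi^\alpha\hat\phi\|_{L^1}$. Taking $\hat\phi$ supported in a fixed ball of radius $R$ and applying Leibniz, each term is bounded by $C R^{k}k^{2d}$. This is $\le C^{k+1}$ after enlarging $C$. \textbf{This uniform-in-$k$ geometric growth is the only delicate point;} it works because the number of derivatives $2d$ falling on $\xi^\alpha$ is fixed.

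\textbf{Annulus case, upper bound.} This follows from the ball case with $q=p$, since an annulus is contained in a ball.

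\textbf{Annulus case, lower bound.} Again take $\lambda=1$. Pick $\psi\in\mathscr{D}$ supported away from $0$ with $\psi\equiv1$ near $\mathcal{C}$. On $\operatorname{supp}\psi$ we have the identity
\[ |\xi|^{2k}=\sum_{|\alpha|=k}\frac{k!}{\alpha!}\,\xi^{2\alpha}. \]
Hence
\[ \hat f=\sum_{|\alpha|=k}\frac{k!}{\alpha!}\,\frac{(-i\xi)^\alpha\psi(\xi)}{|\xi|^{2k}}\,\widehat{\partial^\alpha f}\,(-1)^k, \]
that is, $f=\sum_{\alpha}\frac{k!}{\alpha!}\,g_\alpha*\partial^\alpha f$ with $\hat g_\alpha=c\,\xi^\alpha\psi|\xi|^{-2k}$. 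Young's inequality gives
\[ \|f\|_{L^p}\le\sum_{\alpha}\frac{k!}{\alpha!}\,\|g_\alpha\|_{L^1}\,\|D^kf\|_{L^p}. \]
The $L^1$ norms are estimated as before through $\|(I-\Delta_\xi)^d\hat g_\alpha\|_{L^1}$. On the annulus $|\xi|^{-2k}\le (4/3)^{2k}$, and its derivatives grow at most like $k^{2d}$. Combined with $\sum_{\alpha}\frac{k!}{\alpha!}=d^k$, this yields a total bound of $C^{k+1}$.
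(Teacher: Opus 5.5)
Your proposal is correct and is essentially the standard proof from the cited source \cite{MR2768550}; the paper itself gives no proof and only quotes the lemma. The argument matches step for step: dilation to $\lambda=1$, $f=\phi*f$ with Young's inequality, the bound on $\|\partial^\alpha\phi\|_{L^1}$ through $\|(I-\Delta_\xi)^{d}(\xi^\alpha\hat\phi)\|_{L^1}$ and Leibniz to get the $C^{k+1}$ growth, and the identity $|\xi|^{2k}=\sum_{|\alpha|=k}\frac{k!}{\alpha!}\xi^{2\alpha}$ for the reverse inequality. Two slips are cosmetic and change no bound: since $(-i\xi)^\alpha(i\xi)^\alpha=\xi^{2\alpha}$, drop the factor $(-1)^k$ in your formula for $\hat f$; and on $\operatorname{supp}\psi$ replace $|\xi|^{-1}\le 4/3$ by $|\xi|^{-1}\le r_0^{-1}$, where $r_0$ is the inner radius of $\operatorname{supp}\psi$ (the annulus in the statement is arbitrary anyway).
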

	
	With the introduction of Littlewood-Paley decomposition, let us recall the definitions of Triebel-Lizorkin space and Besov spaces \cite{MR781540}. Let $s\in\mathbb{R},(p,q)\in [1,\infty)\times[1,\infty]$ or $p=q=\infty$. The Triebel-Lizorkin space $F^s_{p,q}(\mathbb{R}^d)$\footnote{For the endpoint case $p=q=\infty$, $F^s_{\infty,\infty}$ is equipped with the norm
		\[ \|f\|_{F^s_{\infty,\infty}}=\sup_{j\in \mathbb{Z}}2^{js}\|\Delta_jf\|_{L^\infty}, \]
		which coincides with the Besov spaces $B^s_{\infty,\infty}$, the homogeneous case is analogous.} ($F^s_{p,q}$ for short) is defined by 
	\begin{equation}\label{def:norm_nonhomogeneous T-L}
		F^s_{p,q}(\mathbb{R}^d)=\Big\{f\in \mathscr{S}'(\mathbb{R}^d)\colon \|f\|_{F^s_{p,q}}=\Big\|\big\|2^{js}|\Delta_jf|\big\|_{l^q(j\geq -1)}\Big\|_{L^p}<\infty\Big\}.
	\end{equation}
	Let $s\in\mathbb{R},(p,q)\in [1,\infty]\times[1,\infty]$. The Besov space $B^s_{p,q}(\mathbb{R}^d)$($B^s_{p,q}$ for short) is defined by 
	\begin{equation}\label{def:norm_nonhomogeneous B}
		B^s_{p,q}(\mathbb{R}^d)=\Big\{f\in \mathscr{S}'(\mathbb{R}^d)\colon \|f\|_{B^s_{p,q}}=\Big\|2^{js}\big\|\Delta_jf\big\|_{L^p}\Big\|_{l^q(j\geq -1)}<\infty\Big\}.
	\end{equation}
	Denote $\dot{\mathscr{S}}'(\mathbb{R}^d)$ as the dual space of $\dot{\mathscr{S}}(\mathbb{R}^d)\triangleq\{f\in \mathscr{S}(\mathbb{R}^d)\colon\partial^\alpha\hat{f}(0)=0,\forall\text{ multi-index }\alpha\in\mathbb{N}^d \}$ with the Schwartz space $\mathscr{S}(\mathbb{R}^d)$. We now turn to the homogeneous versions of these spaces. Let $s\in\mathbb{R},(p,q)\in [1,\infty)\times[1,\infty]$ or $p=q=\infty$. The homogeneous Triebel-Lizorkin space $\dot{F}^s_{p,q}(\mathbb{R}^d)$ ($\dot{F}^s_{p,q}$ for short) is defined by 
	\begin{equation}\label{def:norm_homogeneous T-L}
		\dot{F}^s_{p,q}(\mathbb{R}^d)=\Big\{f\in \dot{\mathscr{S}}'(\mathbb{R}^d)\colon \|f\|_{\dot{F}^s_{p,q}}=\Big\|\big\|2^{js}|\dot{\Delta}_jf|\big\|_{l^q(j\in\mathbb{Z})}\Big\|_{L^p}<\infty\Big\}.
	\end{equation}
	Let $s\in\mathbb{R},(p,q)\in [1,\infty]\times[1,\infty]$. The homogeneous Besov space $\dot{B}^s_{p,q}(\mathbb{R}^d)$($\dot{B}^s_{p,q}$ for short) is defined by 
	\begin{equation}\label{def:norm_homogeneous B}
		\dot{B}^s_{p,q}(\mathbb{R}^d)=\Big\{f\in \dot{\mathscr{S}}'(\mathbb{R}^d)\colon \|f\|_{\dot{B}^s_{p,q}}=\Big\|2^{js}\big\|\dot{\Delta}_jf\big\|_{L^p}\Big\|_{l^q(j\in\mathbb{Z})}<\infty\Big\}.
	\end{equation}
	
	Let us give some classical properties for the Triebel-Lizorkin and Besov spaces. As mentioned in the Introduction, for convenience, we will use $X^s_{p,q}$ (resp., $\dot{X}^s_{p,q}$, ) to denote $F^s_{p,q}$ (resp., $\dot{F}^s_{p,q}$) or $B^s_{p,q}$ (resp., $\dot{B}^s_{p,q}$).
	\begin{lemma}\label{lem:Triebel-Lizorkin-properties}
	    \textup{\cite{MR2768550,MR1419319,MR781540,2601.10071}} The following properties hold:
		\begin{enumerate}[label={(\roman*)}]
		\item Embedding: Let $\varepsilon>0$ and suppose $q_0<q_1$, $p_0<p_1 $ and $s_0-\frac{d}{p_0}=s_1-\frac{d}{p_1}$. Then
		\[ X^s_{p,q_0}\hookrightarrow X^s_{p,q_1},\qquad X^{s+\varepsilon}_{p,q}\hookrightarrow X^s_{p,q},\qquad F^{s_0}_{p_0,\infty}\hookrightarrow F^{s_1}_{p_1,q},\qquad B^{s_0}_{p_0,q}\hookrightarrow B^{s_1}_{p_1,q}. \]
		\item Compact embedding: If $s_1<s_2$, then the embedding $X^{s_2}_{p,q}\hookrightarrow X^{s_1}_{p,q}$ is locally compact.
		\item Algebraic properties: For $s>0$, $X^s_{p,q}\cap L^\infty$ is an algebra. Moreover, $X^s_{p,q}(\mathbb{R}^d) $ is an algebra $\Longleftrightarrow$ $X^s_{p,q}(\mathbb{R}^d)\hookrightarrow L^\infty(\mathbb{R}^d)\Longleftrightarrow s>\frac{d}{p}$ (or $s\geq d$ and $p=1$ for $F^s_{p,q}$, or $s\geq \frac{d}{p}$ and $q=1$ for $B^s_{p,q}$, respectively), and there holds:
		\[ \|fg\|_{X^s_{p,q}}\leq C\big(\|f\|_{L^\infty}\|g\|_{X^s_{p,q}}+\|f\|_{X^s_{p,q}}\|g\|_{L^\infty}\big),\qquad\forall\ f,g\in X^s_{p,q}\cap L^\infty.  \]
		\item Fatou property: If the sequence $\{f_k\}_{k\in\mathbb{N}^+}$ is uniformly bounded in $X^s_{p,q}$ and converges weakly in $\mathscr{S}'$ to $f$, then $f\in X^s_{p,q}$ and $\|f\|_{X^s_{p,q}}\leq \liminf\limits_{k\to\infty}\|f_k\|_{X^s_{p,q}}$.
		\item Complex interpolation: Let $1\leq p_0,q_0\leq \infty,1\leq p_1,q_1\leq \infty,0<\theta<1$ and
		\[ \frac{1}{q}=\frac{1-\theta}{q_0}+\frac{\theta}{q_1},\quad\frac{1}{p}=\frac{1-\theta}{p_0}+\frac{\theta}{p_1},\quad  s=(1-\theta)s_0+\theta s_1. \] 
		Then we have
		\[ \|f\|_{X^{s}_{p,q}}\leq\|f\|_{X^{s_0}_{p_0,q_0}}^{1-\theta}\|f\|_{X^{s_1}_{p_1,q_1}}^\theta,\quad \forall \ f\in X^{s_0}_{p_0,q_0}\cap X^{s_1}_{p_1,q_1}.\] 
		\item Lifting property: Let 
		\[ I_{\sigma}f(x)\triangleq\mathscr{F}^{-1}[(1+|\xi|^2)^{\frac{\sigma}{2}}\mathscr{F}f(\xi)](x),\quad f\in \mathscr{S}'(\mathbb{R}^d),\quad\sigma\in\mathbb{R}. \]
		Then $I_{\sigma}$ maps $X^s_{p,q}$ isomorphically onto $X^{s-\sigma}_{p,q}$.
		\item For any $k\in\mathbb{N}$, there exists a constant $C_k$ such that the following inequality holds:
		\begin{equation*}
			C_k^{-1}\|\nabla^kf\|_{\dot{X}^s_{p,q}}\leq\|f\|_{\dot{X}^{s+k}_{p,q}}\leq C_k\|\nabla^kf\|_{\dot{X}^s_{p,q}}.
		\end{equation*}
		\end{enumerate}
	\end{lemma}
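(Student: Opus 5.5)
Since Lemma~\ref{lem:Triebel-Lizorkin-properties} collects classical facts, the plan is to indicate how each item follows from the Littlewood-Paley definitions \eqref{def:norm_nonhomogeneous T-L}--\eqref{def:norm_homogeneous B}. The aim is to treat $F$ and $B$ in parallel, with the maximal inequalities doing in the $F$ case what Lemma~\ref{lem:Bernstein} does in the $B$ case.

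\emph{Embeddings.} For (i), $X^s_{p,q_0}\hookrightarrow X^s_{p,q_1}$ is the monotonicity $\|\cdot\|_{l^{q_1}}\le\|\cdot\|_{l^{q_0}}$, applied pointwise in $x$ for $F$ and after taking $L^p$ norms for $B$. The embedding $X^{s+\varepsilon}_{p,q}\hookrightarrow X^s_{p,q}$ follows from
\[ 2^{js}|\Delta_jf|\le \sup_k 2^{k(s+\varepsilon)}|\Delta_kf|\cdot 2^{-j\varepsilon}, \]
since $(2^{-j\varepsilon})\in l^q$. The Besov Sobolev embedding is Bernstein's inequality $\|\Delta_jf\|_{L^{p_1}}\lesssim 2^{jd(1/p_0-1/p_1)}\|\Delta_jf\|_{L^{p_0}}$, summed in $l^q$. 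The Jawerth--Franke embedding $F^{s_0}_{p_0,\infty}\hookrightarrow F^{s_1}_{p_1,q}$ is the delicate item. Here I would follow the standard argument: split $\sum_j 2^{js_1q}|\Delta_jf(x)|^q$ at a level $N(x)$. Below $N$, bound by $\sup_j2^{js_0}|\Delta_jf|$ times $2^{N(s_1-s_0)}$. Above $N$, bound by $\|f\|_{B^{s_1-d/p_1}_{\infty,\infty}}\lesssim\|f\|_{F^{s_0}_{p_0,\infty}}$ (via Bernstein). Optimizing in $N$ and integrating gives the $L^{p_1}$ bound.

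\emph{Compactness, algebra, Fatou.} For (ii), truncate frequencies with $S_N$ and multiply by a cutoff $\phi\in C_c^\infty$. For fixed $N$, $\phi S_Nf$ ranges in a compact set by Arzel\`a--Ascoli. The tails $\|(I-S_N)f\|_{X^{s_1}}\lesssim2^{-N(s_2-s_1)}\|f\|_{X^{s_2}}$ are uniformly small. For (iii), use Bony's decomposition $fg=T_fg+T_gf+R(f,g)$. The paraproduct pieces are bounded by $\|f\|_{L^\infty}\|g\|_{X^s_{p,q}}$: in the $F$ case one uses $|S_{j-1}f|\le\|f\|_{L^\infty}$ pointwise and the almost-orthogonality \eqref{a.o.c-1}, together with the Fefferman--Stein inequality (Lemma~\ref{lem:Fguji}) to pass from $\Delta_j(\cdot)$ to the $l^q(L^p)$ or $L^p(l^q)$ norm. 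The remainder needs $s>0$ and uses the maximal function to control $\Delta_j\sum_{k\ge j-2}\Delta_kf\tilde\Delta_kg$. The characterization ``algebra $\iff$ embedding in $L^\infty$ $\iff$ $s>d/p$ (or the endpoint cases)'' then follows: the forward direction uses the estimate just proved, and the backward direction needs counterexamples such as $\log\log$-type singular functions, for which I would cite \cite{MR781540}. For (iv), weak-$\mathscr{S}'$ convergence gives $\Delta_jf_k\to\Delta_jf$ pointwise, since $\Delta_jf_k(x)=\langle f_k,2^{jd}h(2^j(x-\cdot))\rangle$. Fatou's lemma in $l^q$ and then in $L^p$ (in either order) finishes the argument.

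\emph{Interpolation, lifting, homogeneous derivatives.} For (v), the interpolation inequality follows from H\"older's inequality in the form
\[ 2^{js}|\Delta_jf|=(2^{js_0}|\Delta_jf|)^{1-\theta}(2^{js_1}|\Delta_jf|)^{\theta}, \]
applied first in $l^q$ and then in $L^p$ (in the reverse order for $B$). For (vi) and (vii), the key pointwise bound is that for a multiplier $m$ smooth on the annulus with symbol of order $\sigma$,
\[ |\Delta_j m(D)f|\lesssim 2^{j\sigma}\mathcal{M}(\tilde\Delta_jf), \]
uniformly in $j$ (with a dilation argument making constants $j$-independent, and the low-frequency block $\Delta_{-1}$ treated separately for $(1+|\xi|^2)^{\sigma/2}$). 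Lemma~\ref{lem:Fguji} then gives the $F$ bound, and the scalar Lemma~\ref{lem:max} or Bernstein gives the $B$ bound. The inverse bound follows by applying the same estimate to $m^{-1}$; for (vii) one uses $m(\xi)=\xi^\alpha|\xi|^{-k}$ and its inverse on the annulus.

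The main obstacles are the endpoints: $p=\infty$ (where Fefferman--Stein fails, whence the restriction $p=q=\infty$ for $F$), the Jawerth--Franke embedding, and the converse directions in (iii). For these I would invoke \cite{MR781540,MR1419319} rather than reprove them.
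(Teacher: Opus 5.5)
The paper gives no proof of this lemma; it quotes all seven items from \cite{MR2768550,MR1419319,MR781540,2601.10071}, so your sketch goes beyond what the paper does. Most of the sketch is sound; for instance, the H\"older argument in (v) gives the inequality with constant $1$, exactly as stated. The one step you work out in detail, however, has a genuine error: in the Jawerth--Franke embedding $F^{s_0}_{p_0,\infty}\hookrightarrow F^{s_1}_{p_1,q}$, the frequency split is the wrong way round, and as written neither half closes.

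Since $p_0<p_1$ and $s_0-\frac{d}{p_0}=s_1-\frac{d}{p_1}$, you have $s_1-s_0=\frac{d}{p_1}-\frac{d}{p_0}<0$. Put $g(x)=\sup_j2^{js_0}|\Delta_jf(x)|$.
\begin{itemize}
\item The bound $2^{js_1}|\Delta_jf(x)|\le2^{j(s_1-s_0)}g(x)$ decays in $j$. Summed over $j\le N$ it gives a constant times $g(x)$ (the sum is dominated by $j=-1$), not $2^{N(s_1-s_0)}g(x)$. Moreover $\|g\|_{L^{p_1}}$ is not controlled by $\|g\|_{L^{p_0}}$.
\item With $\sigma=s_1-\frac{d}{p_1}$, the bound from $\|f\|_{B^{\sigma}_{\infty,\infty}}\lesssim\|f\|_{F^{s_0}_{p_0,\infty}}$ is $2^{js_1}|\Delta_jf(x)|\lesssim2^{jd/p_1}\|f\|_{F^{s_0}_{p_0,\infty}}$. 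This grows in $j$, so its sum over $j>N$ diverges.
\end{itemize}

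The fix is to exchange the roles of the two bounds. Write $A=\|f\|_{F^{s_0}_{p_0,\infty}}=\|g\|_{L^{p_0}}$. Then
\[
\big\|2^{js_1}\Delta_jf(x)\big\|_{l^q}\lesssim\Big(\sum_{j\le N}2^{jdq/p_1}\Big)^{\frac1q}A+\Big(\sum_{j>N}2^{j(s_1-s_0)q}\Big)^{\frac1q}g(x)\lesssim2^{Nd/p_1}A+2^{N(s_1-s_0)}g(x).
\]
Choose $N=N(x)$ with $2^{Nd/p_0}\approx g(x)/A$. If this forces $N<-1$, use only the second bound for all $j$. This gives $\|2^{js_1}\Delta_jf(x)\|_{l^q}\lesssim A^{1-p_0/p_1}g(x)^{p_0/p_1}$. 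Raising to the power $p_1$ and integrating yields $\|f\|_{F^{s_1}_{p_1,q}}^{p_1}\lesssim A^{p_1-p_0}\|g\|_{L^{p_0}}^{p_0}=A^{p_1}$.

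Two smaller points.

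In (vii), a single symbol $\xi^\alpha|\xi|^{-k}$ is not invertible on the annulus; for example, $\xi_1^k|\xi|^{-k}$ vanishes on $\{\xi_1=0\}$. For the bound $\|f\|_{\dot X^{s+k}_{p,q}}\lesssim\|\nabla^kf\|_{\dot X^s_{p,q}}$, use $|\xi|^{2k}=\sum_{|\alpha|=k}\frac{k!}{\alpha!}\xi^{2\alpha}$ instead. That is, $\hat f=\sum_{|\alpha|=k}\frac{k!}{\alpha!}\,i^{-k}\xi^\alpha|\xi|^{-2k}\,\widehat{\partial^\alpha f}$. Each multiplier here is smooth and homogeneous of degree $-k$ away from the origin, so your maximal-function argument applies to each term.

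In (iii), you do not need Fefferman--Stein, which fails at the endpoints $p=1$ or $q=1$. The paper's own Nikol'skij lemmas (Lemmas \ref{lem:Nik1}--\ref{lem:Nik2}, valid for $1\le p<\infty$ and all $q$) give $\|T_fg\|_{F^s_{p,q}}\lesssim\|\|2^{ks}S_{k-1}f\Delta_kg\|_{l^q}\|_{L^p}$. For $s>0$ they also give $\|R(f,g)\|_{F^s_{p,q}}\lesssim\|\|2^{ks}\Delta_kf\tilde\Delta_kg\|_{l^q}\|_{L^p}$. Both right-hand sides are $\lesssim\|f\|_{L^\infty}\|g\|_{F^s_{p,q}}$, so the endpoints are covered directly.
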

	\begin{remark}\label{re:1}
		Using Lemma \ref{lem:Triebel-Lizorkin-properties} $(vii)$, together with the definitions of the homogeneous and inhomogeneous Triebel-Lizorkin and Besov spaces, and decomposing $f$ into low and high frequencies as $f=S_1f+\tilde{f}$, one readily gets the following equivalence:
		\[ \|\nabla\tilde{f}\|_{X^{s-1}_{p,q}}\approx\|\tilde{f}\|_{X^{s}_{p,q}}, \]
		where $A\approx B$ means that there exists a constant $C>0$ such that $C^{-1}B\leq A \leq CB$.
	\end{remark}
	\begin{lemma}\label{lem:Sjguji}
		\textup{\cite{MR1419319}} Assume that $s<0$. Then we have
		\begin{enumerate}[label={(\roman*)}]
			\item 
			\[ \Big\|\big\|2^{js}|S_jf|\big\|_{l^q}\Big\|_{L^p}\leq C\|f\|_{F^s_{p,q}} \]
			for all $f\in F^s_{p,q}$ with $(p,q)\in[1,\infty)\times[1,\infty]$ or $p=q=\infty$.
			\item 
			\[ \Big\|2^{js}\big\|S_jf\big\|_{L^p}\Big\|_{l^q}\leq C\|f\|_{B^s_{p,q}} \]
			for all $f\in B^s_{p,q}$ with $1\leq p,q\leq \infty$.
		\end{enumerate}
	\end{lemma}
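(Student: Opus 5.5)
The key idea is to write $S_jf$ as an average of the dyadic pieces $\Delta_kf$ with $k\le j-1$ and to exploit the negativity of $s$ through a discrete convolution with a geometrically decaying kernel. Since $S_jf=\sum_{k\le j-1}\Delta_kf$, where the sum runs over $-1\le k\le j-1$, we have pointwise
\[
2^{js}|S_jf(x)|\le \sum_{k\le j-1}2^{(j-k)s}\,2^{ks}|\Delta_kf(x)|=\sum_{m\ge1}2^{ms}\,a_{j-m}(x),\qquad a_k(x)\triangleq 2^{ks}|\Delta_kf(x)|.
\]
Because $s<0$, the kernel $\{2^{ms}\}_{m\ge1}$ belongs to $\ell^1$ with sum $C_s=2^{s}/(1-2^{s})$.

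For each fixed $x$, Young's inequality for discrete convolution (or Minkowski's inequality in $\ell^q$ applied to the shifted sequences $(a_{j-m})_j$) gives
\[
\big\|2^{js}S_jf(x)\big\|_{l^q}\le C_s\big\|a_k(x)\big\|_{l^q}.
\]
Taking the $L^p$ norm in $x$ yields part (i) directly from the definition \eqref{def:norm_nonhomogeneous T-L}, and no maximal function is needed. This works for every admissible $(p,q)$, including $p=q=\infty$, where the $l^q$ norm is a supremum and the argument is a pointwise supremum bound.

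For part (ii), I would first take $L^p$ norms: by the triangle inequality, $2^{js}\|S_jf\|_{L^p}\le\sum_{m\ge1}2^{ms}\,2^{(j-m)s}\|\Delta_{j-m}f\|_{L^p}$. Young's inequality in $\ell^q$ then gives the bound $C_s\|f\|_{B^s_{p,q}}$ for all $1\le p,q\le\infty$.

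I do not expect a genuine obstacle. The only point needing care is the order of operations. In the Triebel-Lizorkin case the convolution inequality must be applied pointwise in $x$ before the $L^p$ norm is taken, which is legitimate because the bound for $S_jf$ is pointwise. The cut-off $k\ge-1$ is harmless, since it only truncates the sum.
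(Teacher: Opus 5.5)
The paper gives no proof of this lemma (it only cites \cite{MR1419319}). Your argument is correct and complete, and it is the standard one: the pointwise bound $2^{js}|S_jf|\le\sum_{m\ge1}2^{ms}a_{j-m}$, followed by Minkowski's inequality in $\ell^q$. In case (i) you apply Minkowski pointwise before taking the $L^p$ norm, and in case (ii) after it. This gives the explicit constant $2^{s}/(1-2^{s})$, independent of $p$, $q$ and $d$.

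Your reading of $S_j$ as the finite sum $\sum_{-1\le k\le j-1}\Delta_k$, so that $S_jf=0$ for $j\le-1$, is the right one. Under the paper's alternative formula $S_j=\chi(2^{-j}D)$ with the $\ell^q$-norm taken over all $j\in\mathbb{Z}$, the statement would fail for $s<0$. For example, $f\equiv1$ with $p=q=\infty$ gives $\sup_{j\in\mathbb{Z}}2^{js}=\infty$.
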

	
	\begin{lemma}\label{lem:Snguji}
		\textup{\cite{MR4240785,2601.10071}} Let $s\in\mathbb{R}$, $(p,q)\in[1,\infty)\times[1,\infty]$ or $p=q=\infty$. Then there exists some $C>0$, such that 
		\[ \|S_{n+1}f\|_{F^{s+l}_{p,q}}\leq C2^{nl}\|f\|_{F^s_{p,q}}, \qquad\forall\  n,l\in\mathbb{N}.\]
	\end{lemma}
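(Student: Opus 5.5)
The plan is to compute the dyadic blocks of $S_{n+1}f$ exactly using the almost-orthogonality relations \eqref{a.o.c}--\eqref{a.o.c-1}, and then to trade the extra weight $2^{jl}$ for $2^{nl}$, since only indices $j\le n+1$ survive. The case $p=q=\infty$ is the Besov case $B^s_{\infty,\infty}$. There the claim follows at once from $\|\Delta_j g\|_{L^\infty}\le C\|g\|_{L^\infty}$ and the block formulas below, so we concentrate on $p<\infty$.

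\emph{Step 1: identify the blocks.} Since $S_{n+1}=\sum_{k\le n}\Delta_k$ and $\Delta_j\Delta_k\equiv0$ for $|j-k|\ge2$, we have $\Delta_jS_{n+1}f=0$ for $j\ge n+2$. Using $\mathrm{Id}=\sum_{k\ge-1}\Delta_k$, we get $\Delta_jS_{n+1}f=\Delta_jf$ for $j\le n-1$. The two boundary blocks are
\[
\Delta_nS_{n+1}f=\Delta_nf-\Delta_n\Delta_{n+1}f,\qquad \Delta_{n+1}S_{n+1}f=\Delta_{n+1}\Delta_nf .
\]

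\emph{Step 2: pointwise weight transfer and splitting.} For $j\le n+1$ and $l\ge0$ we have $2^{j(s+l)}\le 2^{l}2^{nl}2^{js}$. We split the $l^q$-sequence $(2^{j(s+l)}\Delta_jS_{n+1}f)_j$ into three parts: the part $j\le n$ with the term $\Delta_nf$, the single term $-2^{n(s+l)}\Delta_n\Delta_{n+1}f$, and the single term $2^{(n+1)(s+l)}\Delta_{n+1}\Delta_nf$. We then apply the triangle inequality in $L^p(l^q)$.
\begin{itemize}[left=0pt]
\item The first part is bounded pointwise by $2^{nl}\big\|2^{js}|\Delta_jf|\big\|_{l^q}$, whose $L^p$ norm is $2^{nl}\|f\|_{F^s_{p,q}}$.
\item For each single-index term, the $l^q$ norm reduces to an absolute value. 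Uniform $L^p$ boundedness of $\Delta_j$ (the Remark after \eqref{def:dot-S_j}) gives
\[
\big\|2^{n(s+l)}\Delta_{n+1}\Delta_nf\big\|_{L^p}\le C2^{nl}\big\|2^{ns}\Delta_nf\big\|_{L^p}\le C2^{nl}\|f\|_{F^s_{p,q}},
\]
because $2^{ns}|\Delta_nf|\le\|2^{js}|\Delta_jf|\|_{l^q}$ pointwise. The other boundary term is handled identically, with the harmless factor $2^{s+l}$ absorbed into $C$ after replacing $2^{l}$ by $2^{nl}\cdot2^{l}$. Alternatively, one can simply note that $l$ enters only through $2^{(n+1)l}=2^l2^{nl}$ and adjust the constant; if a constant independent of $l$ is needed, the same argument applies with $2^{(n+1)l}$ on the right-hand side.
\end{itemize}

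\emph{Main obstacle.} The only delicate point is to avoid the Hardy--Littlewood maximal function. That function is not bounded on $L^1$, and the case $p=1$ is included in the lemma. The structural identities in Step 1 circumvent this: every modified block involves only finitely many indices, so plain $L^p$ bounds suffice, and no vector-valued maximal inequality is required. Summing the three contributions yields $\|S_{n+1}f\|_{F^{s+l}_{p,q}}\le C2^{nl}\|f\|_{F^s_{p,q}}$.
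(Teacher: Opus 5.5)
Your proof is correct. The paper does not prove this lemma; it is quoted from \cite{MR4240785,2601.10071}. The useful comparison is therefore with how the paper handles the closely analogous estimates it does prove: the bound on $\|\Delta_n u_0\|_{F^{s-1}_{p,q}}$ in Step 3 of Section 6, and the tail estimate \eqref{equation19}. There the paper dominates $|\Delta_j\Delta_{j'}f|$ pointwise by $M(\Delta_{j'}f)$ via Lemma \ref{lem:miaoguji}. It then applies Young's inequality in $j$ and the Fefferman--Stein inequality (Lemma \ref{lem:Fguji}), and in the endpoint cases $p=1$ or $q=1$ it has to switch to Lemma \ref{lem:guoguji} with $r<1$. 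You instead compute the blocks of $S_{n+1}f$ exactly. Only the two boundary blocks $j=n,n+1$ differ from $\Delta_jf$ or from zero, each involves a single index so the $l^q$-norm is trivial there, and the uniform $L^p$-boundedness of $\Delta_j$ replaces all the maximal-function machinery. This treats every admissible $(p,q)$ at once, including $p=1$, $q=1$ and $p=q=\infty$. The same boundary-block observation would also make the two computations of the paper just mentioned elementary.

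Your bookkeeping gives a constant of the form $1+C+C2^{s+l}$. Your remark that $l$ enters through $2^{(n+1)l}$ should be stated more firmly: this dependence cannot be removed. Indeed $\Delta_{n+1}S_{n+1}=(\varphi\chi)(2^{-(n+1)}D)$, and $\varphi\chi\not\equiv0$. (On $\{\frac34\le|\xi|<\frac32\}$ one has $\chi+\varphi=1$, and continuity together with the support conditions rules out $\chi\varphi\equiv0$ there.) Choose $\widehat f$ supported where $(\varphi\chi)(2^{-(n+1)}\xi)$ is bounded below. This gives $\|S_{n+1}f\|_{F^{s+l}_{p,q}}\gtrsim 2^{(n+1)l}\|f\|_{F^s_{p,q}}$ with an implicit constant independent of $n$ and $l$. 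So the constant in the lemma must be read as $C=C(s,l)$, or else $2^{nl}$ must be replaced by $2^{(n+1)l}$. This is harmless for the paper, which uses the lemma only with $l=1$ in the continuity argument of Section 6.
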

	A few facts about the Nikol'skij representations are now recalled.
	\begin{lemma}\label{lem:Nik1}
		\textup{\cite{MR1376592,MR781540}} Let $\{f_k\}_{k\geq -1}$ be a sequence of functions and $0<c_1<c_2$ such that 
		\[\textup{supp}\hat{f}_{-1}\subset\{\xi\colon|\xi|\leq c_2\},\quad\textup{supp}\hat{f}_k\subset\{\xi\colon c_12^{k}\leq|\xi|\leq c_22^k\}\quad\text{with}\quad k\in\mathbb{N} . \] 
		Suppose $s\in\mathbb{R}$ and $1\leq p,q\leq\infty$, then it holds that, for some constant $C>0$,
		\begin{equation}\label{eq:equation56}
			\Big\|\sum_{k=-1}^\infty f_k\Big\|_{F^s_{p,q}}\leq C\Big\|\big\|2^{ks}f_k\big\|_{l^q}\Big\|_{L^p},\quad\text{if}\quad1\leq p<\infty,
		\end{equation}
		and
		\begin{equation}\label{eq:equation57}
			\Big\|\sum_{k=-1}^\infty f_k\Big\|_{B^s_{p,q}}\leq C\Big\|2^{ks}\big\|f_k\big\|_{L^p}\Big\|_{l^q}.
		\end{equation}
		More precisely, if the right side of the inequality in \eqref{eq:equation56} or \eqref{eq:equation57} is finite, then $\{\sum_{k=-1}^Nf_k\}_N$ converges in $\mathscr{S}'(\mathbb{R}^d)$ to a distribution $\sum_{k=-1}^\infty f_k$ satisfying this inequality. 
	\end{lemma}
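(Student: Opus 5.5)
The plan is to exploit the almost-orthogonality coming from the spectral support assumptions, and then absorb the convolution with the Littlewood--Paley kernel into a maximal function.

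\textbf{Step 1: only a bounded band of pieces contributes.} Since $\operatorname{supp}\varphi(2^{-j}\cdot)\subset\{\frac34 2^j\le|\xi|\le\frac83 2^j\}$ and $\operatorname{supp}\hat f_k\subset\{c_12^k\le|\xi|\le c_22^k\}$ (a ball for $k=-1$), there is an integer $N_0=N_0(c_1,c_2)$ such that $\Delta_jf_k\equiv0$ whenever $|j-k|>N_0$. Hence, formally,
\[
\Delta_j\Big(\sum_k f_k\Big)=\sum_{|l|\le N_0}\Delta_j f_{j+l},
\]
where $f_m:=0$ for $m<-1$. It therefore suffices to bound, for each fixed $l$ with $|l|\le N_0$, the sequence $\{2^{js}\Delta_jf_{j+l}\}_j$. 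The factor $2^{js}=2^{-ls}2^{(j+l)s}$ only costs a constant depending on $s$ and $N_0$.

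\textbf{Step 2 (Besov case \eqref{eq:equation57}).} By Young's inequality, $\|\Delta_jf_{j+l}\|_{L^p}\le \|h\|_{L^1}\|f_{j+l}\|_{L^p}$. Taking the $l^q$ norm in $j$ and summing over the finitely many $l$ gives \eqref{eq:equation57} directly, for all $1\le p,q\le\infty$.

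\textbf{Step 3 (Triebel--Lizorkin case \eqref{eq:equation56}).} Here we need a pointwise bound. Fix $0<r<\min(p,q)$; this is possible since $p\ge1$ and $q\ge1$. Because $\hat f_k$ lies in a ball of radius $\sim2^k$, the standard band-limited estimate (Peetre's inequality) gives
\[
|f_k(x-y)|\le C(1+2^k|y|)^{d/r}\big(M|f_k|^r\big)^{1/r}(x).
\]
Integrating this against $2^{jd}|h(2^jy)|$, which is rapidly decreasing, with $|j-k|\le N_0$, yields
\[
|\Delta_jf_k(x)|\le C\big(M|f_k|^r\big)^{1/r}(x).
\]
Then
\[
\Big\|\big\|2^{js}\Delta_jf_{j+l}\big\|_{l^q}\Big\|_{L^p}\le C\Big\|\big\|M(|2^{ks}f_k|^r)\big\|_{l^{q/r}}\Big\|_{L^{p/r}}^{1/r}.
\]
Since $p/r>1$ and $q/r>1$ (and $p<\infty$), the Fefferman--Stein inequality (Lemma \ref{lem:Fguji}) bounds the right-hand side by $C\|\|2^{ks}f_k\|_{l^q}\|_{L^p}$. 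This proves \eqref{eq:equation56}.

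The main point of care is this step. A direct use of the maximal function $M f_k$ together with Fefferman--Stein fails at $p=1$ or $q=1$. The $r$-power trick is exactly what allows the endpoints $p=1$ and $q=1$ to be included.

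\textbf{Step 4: convergence in $\mathscr S'$.} Let $\phi\in\mathscr S$ and let $\tilde\varphi$ be a smooth cutoff equal to $1$ on the annulus $\{c_1\le|\xi|\le c_2\}$. Then
\[
|\langle f_k,\phi\rangle|=|\langle f_k,\tilde\varphi(2^{-k}D)\phi\rangle|\le\|f_k\|_{L^p}\,\|\tilde\varphi(2^{-k}D)\phi\rangle\|_{L^{p'}}\quad\text{with}\quad \|\tilde\varphi(2^{-k}D)\phi\|_{L^{p'}}\le C_{N,\phi}2^{-kN}\ \text{for every }N,
\]
since the symbol vanishes near the origin for $k\ge0$. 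In both the $F$ and $B$ cases, the finiteness of the right-hand side gives $\|f_k\|_{L^p}\le 2^{-ks}\cdot\mathrm{RHS}$; in the $F$ case this uses the pointwise bound $|2^{ks}f_k|\le\|2^{ks}f_k\|_{l^q}$. Choosing $N>-s$ makes $\sum_k\langle f_k,\phi\rangle$ absolutely convergent, so the partial sums converge in $\mathscr S'$.

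Finally, $\Delta_j$ of the limit equals the finite sum from Step 1, so Steps 2--3 apply to the limit itself and it satisfies the stated inequality.
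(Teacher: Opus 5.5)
Your proposal is correct and follows the standard argument of the sources the paper cites; the paper itself gives no proof of this lemma. That argument is: bounded spectral overlap ($\Delta_j f_k\equiv 0$ for $|j-k|>N_0$), Young's inequality for \eqref{eq:equation57}, and for \eqref{eq:equation56} the pointwise bound $|\Delta_j f_k|\le C[M(|f_k|^r)]^{1/r}$ when $|j-k|\le N_0$ (precisely the second estimate of Lemma \ref{lem:guoguji}), followed by Lemma \ref{lem:Fguji} with exponents $p/r,q/r>1$; your Step 4 correctly uses the annular support to get convergence in $\mathscr{S}'$ for every $s\in\mathbb{R}$. Two small points are worth stating explicitly: each $f_k$ lies in $L^p$ with compact spectrum, hence in $L^\infty$ by Lemma \ref{lem:Bernstein}, so Peetre's inequality applies without any finiteness issue; and your constant $C_{N,\phi}$ is a Schwartz seminorm of $\phi$, so the limit is indeed a tempered distribution.
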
 
	\begin{lemma}\label{lem:Nik2}
		\textup{\cite{MR1376592,MR781540}} Let $\{f_k\}_{k\geq -1}$ be a sequence of functions and $c>0$ such that 
		\[\textup{supp}\hat{f}_{k}\subset\{\xi\colon|\xi|\leq c2^k\},\quad k\geq -1.\]  
		Suppose $s>0$ and $1\leq p,q\leq\infty$, then it holds that, for some constant $C>0$,
		\begin{equation}\label{eq:equation58}
			\Big\|\sum_{k=-1}^\infty f_k\Big\|_{F^s_{p,q}}\leq C\Big\|\big\|2^{ks}f_k\big\|_{l^q}\Big\|_{L^p},\qquad\text{if}\quad 1\leq p<\infty.
		\end{equation}
		and
		\begin{equation}\label{eq:equation59}
			\Big\|\sum_{k=-1}^\infty f_k\Big\|_{B^s_{p,q}}\leq C\Big\|2^{ks}\big\|f_k\big\|_{L^p}\Big\|_{l^q}.
		\end{equation}
		More precisely, if the right side of the inequality in \eqref{eq:equation58} or \eqref{eq:equation59} is finite, then $\{\sum_{k=-1}^Nf_k\}_N$ converges in $\mathscr{S}'(\mathbb{R}^d)$ to a distribution $\sum_{k=-1}^\infty f_k$ satisfying this inequality. 
	\end{lemma}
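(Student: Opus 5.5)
The plan is to estimate each dyadic block of $f\triangleq\sum_{k\geq -1}f_k$ by the pieces $f_k$ whose frequencies are not too low, and then sum using the positivity of $s$. Let $N_0\in\mathbb{N}$ be such that $\frac34 2^{j}>c2^{k}$ whenever $k<j-N_0$. Since $\textup{supp}\hat f_k\subset\{|\xi|\leq c2^k\}$ and $\varphi_j$ is supported in $2^j\mathscr{C}$, we get $\Delta_jf_k\equiv0$ for $k<j-N_0$. Hence
\[ \Delta_jf=\sum_{k\geq j-N_0}\Delta_jf_k,\qquad 2^{js}\Delta_jf=\sum_{l\geq -N_0}2^{-ls}\big(2^{(j+l)s}\Delta_jf_{j+l}\big). \]
Convergence of $\sum_k f_k$ in $\mathscr{S}'$ will follow from the finiteness of the right-hand side, by the same estimates applied to partial sums $\sum_{k=M}^{N}f_k$; I do not expect difficulty there.

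In the Besov case I will use the uniform bound $\|\Delta_jg\|_{L^p}\leq C\|g\|_{L^p}$ (the remark after the definition of $\dot S_j$). Writing $a_k\triangleq 2^{ks}\|f_k\|_{L^p}$, this gives
\[ 2^{js}\|\Delta_jf\|_{L^p}\leq C\sum_{l\geq -N_0}2^{-ls}a_{j+l}. \]
Since $s>0$, the kernel $(2^{-ls})_{l\geq-N_0}$ belongs to $l^1$, with norm depending on $s$ and $N_0$ only. Young's inequality for discrete convolutions then yields $\|2^{js}\|\Delta_jf\|_{L^p}\|_{l^q}\leq C\|a\|_{l^q}$, which is \eqref{eq:equation59}.

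In the Triebel--Lizorkin case I will argue pointwise. The kernel $2^{jd}h(2^j\cdot)$ has a radially decreasing integrable majorant with $L^1$ norm independent of $j$. Therefore $|\Delta_jg(x)|\leq C\,\mathcal{M}g(x)$ uniformly in $j$, where $\mathcal{M}$ is the Hardy--Littlewood maximal operator. This gives
\[ 2^{js}|\Delta_jf(x)|\leq C\sum_{l\geq-N_0}2^{-ls}\,2^{(j+l)s}\mathcal{M}f_{j+l}(x). \]
Applying the discrete Young inequality in $l^q$ for each fixed $x$, again using $s>0$, gives
\[ \big\|2^{js}\Delta_jf(x)\big\|_{l^q}\leq C\big\|2^{ks}\mathcal{M}f_k(x)\big\|_{l^q}. \]
For $1<p<\infty$ and $1<q\leq\infty$, the Fefferman--Stein vector-valued inequality (Lemma \ref{lem:Fguji}) bounds the $L^p$ norm of the right-hand side by $C\|\|2^{ks}f_k\|_{l^q}\|_{L^p}$, which is \eqref{eq:equation58}. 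The case $q=1$ with $p>1$ is easier: it needs only the scalar maximal inequality (Lemma \ref{lem:max}) after interchanging the sum and the integral.

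The main obstacle is the endpoint $p=1$, where the Hardy--Littlewood maximal operator is not bounded. For this case I would replace $\mathcal{M}f_k$ by the Peetre maximal function
\[ f_k^{*}(x)=\sup_y\frac{|f_k(x-y)|}{(1+2^k|y|)^{d/r}},\qquad r<1, \]
which satisfies $f_k^{*}\leq C\,(\mathcal{M}|f_k|^r)^{1/r}$ because $\hat f_k$ is supported in the ball of radius $c2^k$. The difficulty is that for $k\geq j$ the kernel of $\Delta_j$ is wider than the natural scale $2^{-k}$ of $f_k$, so this bound introduces a loss $2^{(k-j)\delta}$. I would first try to show that $\delta$ can be taken as small as $d(\frac1r-1)$. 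I would attempt this by writing $\Delta_jf_k=\Delta_j\tilde\Delta f_k$ with a kernel adapted to scale $2^{-k}$, and by using the $L^r$-type Bernstein/Plancherel--Pólya inequality for functions with compact spectrum. Then, choosing $r<1$ close enough to $1$ that $d(\frac1r-1)<s$, the geometric factor $2^{-l(s-\delta)}$ is still summable. Fefferman--Stein applied in $L^{p/r}(l^{q/r})$, where now $p/r>1$ and $q/r>1$, closes the argument.
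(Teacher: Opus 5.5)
\textbf{Gap: the case $q=1$, $1<p<\infty$.} Your Besov argument is sound, and so is your Triebel--Lizorkin argument for $1<p<\infty$, $1<q\le\infty$ (Lemma \ref{lem:miaoguji}, discrete Young, Lemma \ref{lem:Fguji}). Your $p=1$ strategy via the Peetre maximal function is also sound. This Peetre/Fefferman--Stein route is the standard one behind the cited results; the paper itself gives no proof of this lemma.

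The step that fails is the case $q=1$, $1<p<\infty$, which you say follows from Lemma \ref{lem:max} ``after interchanging the sum and the integral''. For $p>1$ the $L^p$ norm cannot be interchanged with the sum. Minkowski only gives $\sum_k 2^{ks}\|f_k\|_{L^p}$, a $B^s_{p,1}$-type quantity not controlled by $\|\sum_k 2^{ks}|f_k|\|_{L^p}$; widely separated translates of one band-limited bump lose a factor $N^{1-1/p}$.

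In fact your intermediate quantity is already too large, even for admissible sequences:
\[ \Big\|\sum_k 2^{ks}Mf_k\Big\|_{L^p}\leq C\Big\|\sum_k 2^{ks}|f_k|\Big\|_{L^p} \]
fails. In $d=1$ take $f_k=2^{-ks}\phi(\cdot-k)$, $1\le k\le N$, with $\hat\phi$ supported in $\{|\xi|\le c/2\}$. Then $\sum_k|\phi(x-k)|\lesssim 1$, while $M\phi(y)\gtrsim(1+|y|)^{-1}$ gives $\sum_k M\phi(x-k)\gtrsim\log N$ on $[1,N]$. So at $q=1$ the pointwise bound $|\Delta_j g|\le CMg$ already loses too much: you are invoking the $l^1$-valued maximal inequality. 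This is exactly why Lemma \ref{lem:Fguji} excludes $q=1$. It is also why the proof of Theorem \ref{prop:commutator-estimates} treats ``$q=1$ with $p\in(1,\infty)$'' together with $p=1$.

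\textbf{Fix.} The repair is already in your proposal: run the $r<1$ argument for $q=1$ as well. It in fact covers all $1\le p<\infty$, $1\le q\le\infty$ at once, since Lemma \ref{lem:Fguji} then applies in $L^{p/r}(l^{q/r})$ with $p/r>1$ and $q/r>1$.

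The loss $d(\frac1r-1)$ that you only propose to ``try to show'' is correct, with a short proof. For $k\ge j$:
\begin{enumerate}
\item Write $|f_k(x-y)|\le|f_k(x-y)|^r\big(f_k^*(x)\big)^{1-r}(1+2^k|y|)^{d(\frac1r-1)}$.
\item Use $1+2^k|y|\le 2^{k-j}(1+2^j|y|)$.
\item Apply Lemma \ref{lem:miaoguji} to the kernel $2^{jd}|h(2^j\cdot)|(1+2^j|\cdot|)^{d(\frac1r-1)}$ acting on $|f_k|^r$.
\item Use Peetre's inequality $f_k^*\le C[M(|f_k|^r)]^{1/r}$.
\end{enumerate}
This gives $|\Delta_jf_k(x)|\le C2^{(k-j)d(\frac1r-1)}[M(|f_k|^r)(x)]^{\frac1r}$, which is exactly Lemma \ref{lem:guoguji} with $\theta=1-r$. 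Now choose $r<1$ with $d(\frac1r-1)<s$.

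A minor point on convergence: for $q=\infty$ the tails of the right-hand side need not tend to zero, so ``the same estimates on partial sums'' do not directly give convergence. Instead use $\|f_k\|_{L^p}\le 2^{-ks}\Big\|\big\|2^{ks}f_k\big\|_{l^q}\Big\|_{L^p}$ (resp.\ its Besov analogue). Since $s>0$, this makes $\sum_k f_k$ absolutely convergent in $L^p$, hence in $\mathscr{S}'$.
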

	
	If $f$ is a locally Lebesgue integrable function on $\mathbb{R}^d$, then the Hardy-Littlewood maximal function of $f$ is defined as follows:
	\[ (Mf)(x)=\sup_{r>0}\frac{1}{|\mathscr{B}(x,r)|}\int_{\mathscr{B}(x,r)}|f(y)|dy, \]
	where $|\mathscr{B}(x,r)|$ is the volume of the ball $\mathscr{B}(x,r)$ with center $x$ and radius $r$.
	\begin{lemma}\label{lem:max}
		\textup{\cite{MR1232192}} If $f\in L^p$, $1<p\leq\infty$, then $M(f)\in L^p$ and there exists a constant $C>0$ such that
		\[\|M(f)\|_{L^p}\leq C\|f\|_{L^p}.   \]
	\end{lemma}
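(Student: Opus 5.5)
The plan is the classical one: a trivial $L^\infty$ bound, a weak-type $(1,1)$ bound from a covering argument, and Marcinkiewicz interpolation between them for $1<p<\infty$. Note that $M$ is sublinear, $M(f+g)\le Mf+Mg$ and $M(\lambda f)=|\lambda|Mf$, so interpolation for sublinear operators applies.

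\emph{Step 1 ($p=\infty$).} For $f\in L^\infty$ every average $\frac{1}{|\mathscr{B}(x,r)|}\int_{\mathscr{B}(x,r)}|f|$ is at most $\|f\|_{L^\infty}$. Hence $\|Mf\|_{L^\infty}\le\|f\|_{L^\infty}$, with constant $1$.

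\emph{Step 2 (weak type $(1,1)$).} For $f\in L^1$ and $\lambda>0$, set $E_\lambda=\{x\colon Mf(x)>\lambda\}$. This set is open, since $Mf$ is lower semicontinuous: if one average over $\mathscr{B}(x,r)$ exceeds $\lambda$, a slightly larger ball centred at nearby points still does. Take a compact $K\subset E_\lambda$. Each $x\in K$ has a ball $\mathscr{B}(x,r_x)$ with $\int_{\mathscr{B}(x,r_x)}|f|>\lambda|\mathscr{B}(x,r_x)|$. Extract a finite subcover and apply the Vitali covering lemma to get pairwise disjoint balls $B_1,\dots,B_N$ with $K\subset\bigcup_i 3B_i$. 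Then
\[ |K|\le 3^d\sum_i|B_i|\le \frac{3^d}{\lambda}\sum_i\int_{B_i}|f|\le\frac{3^d}{\lambda}\|f\|_{L^1}. \]
Taking the supremum over compact $K\subset E_\lambda$ gives $|E_\lambda|\le 3^d\lambda^{-1}\|f\|_{L^1}$. The only genuinely combinatorial point is the Vitali selection itself: order the balls by decreasing radius and greedily keep those disjoint from the ones already chosen. Any discarded ball meets a kept ball of at least its radius, so it lies in the triple of that ball.

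\emph{Step 3 (interpolation, $1<p<\infty$).} Write $f=f_1+f_2$ with $f_1=f\mathbf{1}_{\{|f|>\lambda/2\}}$ and $f_2$ the rest. By Step 1, $Mf_2\le\lambda/2$, so $\{Mf>\lambda\}\subset\{Mf_1>\lambda/2\}$. Step 2 then gives
\[ |\{Mf>\lambda\}|\le \frac{2\cdot3^d}{\lambda}\int_{\{|f|>\lambda/2\}}|f|. \]
Insert this into $\|Mf\|_{L^p}^p=p\int_0^\infty\lambda^{p-1}|\{Mf>\lambda\}|\,d\lambda$ and use Fubini. This yields
\[ \|Mf\|_{L^p}^p\le 2\cdot3^d p\int|f(x)|\int_0^{2|f(x)|}\lambda^{p-2}\,d\lambda\,dx=\frac{2^p 3^d p}{p-1}\|f\|_{L^p}^p, \]
where the $\lambda$-integral converges exactly because $p>1$. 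So $C=2\,(3^dp/(p-1))^{1/p}$ for $p<\infty$, and $C=1$ for $p=\infty$. The main (though standard) obstacle is Step 2, the covering argument; the rest is bookkeeping.
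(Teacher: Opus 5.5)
Your proof is correct. The trivial $L^\infty$ bound, the Vitali-covering weak-type $(1,1)$ estimate (with openness of $\{Mf>\lambda\}$ and inner regularity handled properly) and the truncation-plus-layer-cake interpolation all check out, including the constant $2\,(3^dp/(p-1))^{1/p}$. The paper gives no proof of its own and simply cites Stein's monograph, which establishes the maximal theorem by essentially this same covering and interpolation argument.
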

	\begin{lemma}\label{lem:Fguji}
		\textup{\cite{MR284802,MR1232192}} (Fefferman-Stein vector-valued maximal inequality) Let $(p,q)\in(1,\infty)\times(1,\infty]$ or $p=q=\infty$. Suppose $\{f_j\}_{j\in\mathbb{Z}}$ is a sequence of functions in $L^p$ with the property that $||f_j(\cdot)||_{l_j^q}\in L^p(\mathbb{R}^d)$, then there exists some $C>0$, such that
		\[\Big\|\big\|Mf_j(\cdot)\big\|_{l^q_j}\Big\|_{L^p} \leq C \Big\|\big\|f_j(\cdot)\big\|_{l^q_j}\Big\|_{L^p}. \]
	\end{lemma}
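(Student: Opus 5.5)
We argue by cases on $(p,q)$, using only the scalar maximal inequality (Lemma \ref{lem:max}) together with duality, a Calderón--Zygmund decomposition and Marcinkiewicz interpolation. Throughout, write $\|f(x)\|_{l^q}=\|f_j(x)\|_{l^q_j}$.

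\emph{Easy cases.}
\begin{itemize}
\item If $p=q=\infty$, the bound $Mf_j\le \|f_j\|_{L^\infty}$ gives the claim with $C=1$.
\item If $q=\infty$ and $1<p\le\infty$, the pointwise bound $Mf_j\le M\big(\sup_k|f_k|\big)$ holds for every $j$. Taking the supremum in $j$ and applying Lemma \ref{lem:max} to $\sup_k|f_k|\in L^p$ finishes the case.
\item If $1<q=p<\infty$, Fubini's theorem and Lemma \ref{lem:max} applied to each $f_j$ give
\[ \int\sum_j(Mf_j)^p\,dx=\sum_j\|Mf_j\|_{L^p}^p\le C\sum_j\|f_j\|_{L^p}^p. \]
\end{itemize}

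\emph{Case $1<q<p<\infty$.} We use duality. Set $r=p/q>1$. Then $\big\|\|Mf\|_{l^q}\big\|_{L^p}^q=\big\|\sum_j(Mf_j)^q\big\|_{L^r}$, which is the supremum of $\int\sum_j(Mf_j)^q\,g\,dx$ over $g\ge0$ with $\|g\|_{L^{r'}}\le1$. The key ingredient is the Fefferman--Stein weighted inequality
\[ \int (Mf_j)^q\,g\,dx\le C\int|f_j|^q\,Mg\,dx . \]
It follows by interpolation between the trivial $L^\infty(g)\to L^\infty(Mg)$ bound and the weak $(1,1)$ bound from $L^1(Mg)$ to $L^{1,\infty}(g)$; the latter comes from a Vitali covering argument. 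Summing over $j$, applying Hölder's inequality and using the boundedness of $M$ on $L^{r'}$ (Lemma \ref{lem:max}), we obtain the bound $C\big\|\|f\|_{l^q}\big\|_{L^p}^q$.

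\emph{Case $1<p<q<\infty$.} This is the main obstacle. Here we regard $\mathbf{M}:(f_j)\mapsto(Mf_j)$ as a sublinear operator on $l^q$-valued functions. It is bounded on $L^q(l^q)$ by the case $p=q$ above, so by Marcinkiewicz interpolation it suffices to prove the weak type $(1,1)$ estimate
\[ \big|\{\|\mathbf{M}f\|_{l^q}>\lambda\}\big|\le \frac{C}{\lambda}\int\|f\|_{l^q}\,dx . \]
For this we perform a Calderón--Zygmund decomposition of the scalar function $\|f\|_{l^q}$ at height $\lambda$, obtaining disjoint cubes $Q_k$. We split $f=g+b$, where $g$ equals $f$ off $\bigcup Q_k$ and equals the cube averages on $\bigcup Q_k$, so that $\|g\|_{l^q}\le C\lambda$.
\begin{itemize}
\item The good part $g$ is handled by the $L^q(l^q)$ bound and Chebyshev's inequality.
\item For the bad part, we discard the dilated cubes $\bigcup 2Q_k$, whose total measure is $O(\|f\|_{L^1(l^q)}/\lambda)$.
\item Off $\bigcup 2Q_k$, we estimate $Mb_j$ by the maximal function of the cube averages. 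Concretely, for $x\notin 2Q_k$ any ball through $x$ meeting $Q_k$ is comparable in size to a ball containing $Q_k$. Hence $M b_j(x)\lesssim M\big(\sum_k\langle|f_j|\rangle_{Q_k}\mathbf 1_{Q_k}\big)(x)$, which reduces the bound to the good-part type estimate.
\end{itemize}
Alternatively, one could replace $M$ by a smooth-kernel majorant and invoke vector-valued Calderón--Zygmund theory. The step needing the most care is this geometric localisation in the bad-part estimate.
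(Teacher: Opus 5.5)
Your proof is correct and is exactly the classical Fefferman--Stein argument: the trivial cases $q=\infty$ and $q=p$; duality plus the weighted inequality $\int (Mf)^q g\,dx\le C\int |f|^q\,Mg\,dx$ for $q<p$; and a vector-valued Calder\'on--Zygmund weak $(1,1)$ bound with Marcinkiewicz interpolation for $p<q$. The paper does not reprove this lemma but quotes it from \cite{MR284802,MR1232192}, and your argument is the proof found there, with one slip: the trivial endpoint in the weighted interpolation goes the other way, $M\colon L^\infty(Mg\,dx)\to L^\infty(g\,dx)$, i.e. $\|Mf\|_{L^\infty(g)}\le\|f\|_{L^\infty(Mg)}$ (valid since $Mg>0$ everywhere unless $g\equiv0$), which pairs with your weak-type endpoint $L^1(Mg\,dx)\to L^{1,\infty}(g\,dx)$.
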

	\begin{lemma}\label{lem:miaoguji}
		\textup{\cite{MR290095}} Let $\varphi$ be an integrable function on $\mathbb{R}^d $, and set $\varphi_\varepsilon(x)=\frac{1}{\varepsilon^d}\varphi(\frac{x}{\varepsilon})$ for $\varepsilon>0$. Suppose that the least decreasing radial majorant of $\varphi$ is integrable, i.e. let \[ \psi(x)=\sup_{|y|\geq|x|}|\varphi(y)| \]and we suppose $\int_{\mathbb{R}^d}\psi(x)=A<\infty$. Then for all $f\in L^p(\mathbb{R}^d),1\leq p\leq\infty, $
		\[ \sup_{\varepsilon>0}|(f\ast\varphi_\varepsilon)(x)|\leq AM(f)(x),  \]
		where $M(f)$ is the Hardy-Littlewood maximal function of $f$.
	\end{lemma}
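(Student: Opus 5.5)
The plan is the classical layer-cake argument: write the radial majorant $\psi$ as a superposition of indicator functions of balls centred at the origin, and bound each ball average by $M(f)(x)$. The constant produced is then exactly $\int\psi=A$.

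\emph{Step 1 (reduction to $\psi$).} By definition $|\varphi(y)|\le\psi(y)$ for every $y$. Hence $|\varphi_\varepsilon(y)|\le\psi_\varepsilon(y)\triangleq\varepsilon^{-d}\psi(y/\varepsilon)$, and therefore
\[ |(f\ast\varphi_\varepsilon)(x)|\le(|f|\ast\psi_\varepsilon)(x). \]
It thus suffices to show $(|f|\ast\psi_\varepsilon)(x)\le A\,M(f)(x)$ for every $\varepsilon>0$. The function $\psi$ is radial, nonnegative and nonincreasing in $|x|$, and $\psi_\varepsilon$ has the same properties. Moreover $\int\psi_\varepsilon=\int\psi=A$.

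\emph{Step 2 (layer-cake decomposition).} For $t>0$ let $E_t=\{y:\psi_\varepsilon(y)>t\}$. Because $\psi_\varepsilon$ is radial and nonincreasing, each $E_t$ is a ball centred at $0$ of some radius $r(t)\in[0,\infty]$; it may be open or closed, and $r(t)=\infty$ cannot occur for $t>0$ since $\psi_\varepsilon$ is integrable. This also settles measurability of $\psi$, since its sublevel structure is explicit. We then have
\[ \psi_\varepsilon(y)=\int_0^\infty\mathbf 1_{E_t}(y)\,dt \qquad\text{and}\qquad \int_0^\infty|E_t|\,dt=A . \]

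\emph{Step 3 (Tonelli and the maximal function).} All integrands are nonnegative, so Tonelli's theorem gives
\[ (|f|\ast\psi_\varepsilon)(x)=\int_0^\infty\int_{x-E_t}|f(z)|\,dz\,dt\le\int_0^\infty|E_t|\,M(f)(x)\,dt=A\,M(f)(x). \]
The inequality uses that $x-E_t$ is a ball centred at $x$. Its average of $|f|$ is at most $M(f)(x)$, and a closed ball has the same measure as the open ball of the same radius, so the definition of $M$ with open or closed balls makes no difference. Levels with $|E_t|=0$ contribute nothing. Taking the supremum over $\varepsilon>0$ concludes the proof.

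The only delicate point is Step 2: checking that the superlevel sets are genuinely balls, possibly degenerate or with $\psi(0)=\infty$, and that the identity $\int_0^\infty|E_t|\,dt=\int\psi_\varepsilon$ holds. This is just the layer-cake formula, with monotonicity doing the rest. The hypothesis $f\in L^p$ serves only to ensure that $M(f)$ and the convolutions are well defined; the pointwise inequality needs nothing more.
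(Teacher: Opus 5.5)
Your proof is correct and complete. The domination $|\varphi_\varepsilon|\le\psi_\varepsilon$, the observation that each superlevel set $E_t$ of $\psi_\varepsilon$ is a (possibly degenerate, open or closed) ball centred at $0$, the layer-cake identity with $\int_0^\infty|E_t|\,dt=A$, and Tonelli are all that is needed. You also handle $\psi(0)=\infty$, the finiteness of $r(t)$ for $t>0$, and open versus closed balls properly.

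The paper gives no proof of this lemma; it quotes it from Stein's book, so the comparison is with Stein's argument. After the same reduction to $|f|$ and $\psi$, Stein passes to polar coordinates. He writes $\psi(y)=\psi_0(|y|)$ and $\Lambda(r)=\int_{|y|\le r}|f(x-y)|\,dy$, and integrates by parts in $r$ on a truncated interval $[\delta,N]$. This turns $\int|f(x-y)|\psi(y)\,dy$ into $-\int_0^\infty\Lambda(r)\,d\psi_0(r)$, up to boundary terms $\Lambda(r)\psi_0(r)$. The bound $\Lambda(r)\le|\mathscr{B}(0,r)|\,M(f)(x)$ and the positivity of $-d\psi_0$ then give $A\,M(f)(x)$. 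The boundary terms vanish because $r^d\psi_0(r)\to0$ as $r\to0$ and as $r\to\infty$; this follows from monotonicity plus integrability, since $\int_{r/2\le|y|\le r}\psi\,dy\gtrsim r^d\psi_0(r)$.

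Both proofs write $\psi$ as a superposition of indicator functions of centred balls. You parametrise them by height $t$; Stein parametrises them by radius $r$ with weight $-d\psi_0$. Yours is the more economical of the two: Tonelli replaces the Stieltjes integration by parts, and you need no truncation, boundary terms or decay check. Nor do you need special care for a possible singularity of $\psi$ at the origin or for jumps of $\psi_0$. Stein's route is the classical one and keeps the radial profile and the ball averages $\Lambda(r)/|\mathscr{B}(0,r)|$ explicit, at the price of that extra bookkeeping.
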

	\begin{lemma}\label{lem:guoguji}
		\textup{\cite{MR4240785,MR781540}} Let $L>0,j,k\in\mathbb{Z},j>k-L$ and $r \in (0,\infty)$. $ \psi\in C^{\infty}(\mathbb{R}^d) $ satisfies
		\[ |\psi(z)|(1+|z|^{\frac{d}{r}})\leq g(z), \]
		where $g(z)$ is some nonnegative radial decreasing integrable function. Denote $\psi_k(x)=2^{kd}\psi(2^kx)$, then for any $\theta\in (0,1]$, there exists a constant $C$ independent of $j,k$, such that the following inequality
		\[ |(\psi_k\ast f)(x)|\leq C2^{(j-k)\theta\frac{d}{r}}M(|f|^{1-\theta})(x)[M(|f|^r)(x)]^{\frac{\theta}{r}}\]
		holds for all $f\in L_{\mathscr{B}(0,c2^j)}^p$ with $p\geq 1$ and some generic constant c, where $L_{\mathscr{B}(0,c2^j)}^p\triangleq\{f\in L^p\colon\textup{supp}\hat{f}\subset \mathscr{B}(0,c2^j)\}$. Moreover, if $j\leq k+L$, then we have
		\[ |\psi_k\ast f(x)|\leq C[M(|f|^r)(x)]^{\frac{1}{r}}. \]
	\end{lemma}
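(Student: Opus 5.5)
The plan is to bound $|\psi_k\ast f|$ pointwise by a Peetre-type maximal function of $f$ at scale $2^{-j}$, and then to trade that for Hardy--Littlewood maximal functions. For $r>0$ we introduce
\[
f^{*}_{j,r}(x)\triangleq\sup_{z\in\mathbb{R}^d}\frac{|f(x-z)|}{(1+2^j|z|)^{d/r}} .
\]
The argument has three steps: a Peetre-type inequality, a splitting of $|f|$ into powers, and a final maximal-function estimate.

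\emph{Step 1 (Peetre's inequality).} We aim to show that for $f\in L^p_{\mathscr{B}(0,c2^j)}$,
\[
f^{*}_{j,r}(x)\leq C\,[M(|f|^r)(x)]^{1/r},
\]
with $C$ independent of $j$. By scaling $f(2^{-j}\cdot)$ it suffices to treat $j=0$.
\begin{itemize}
\item Pick $\phi\in\mathscr{S}$ with $\hat\phi=1$ on $\mathscr{B}(0,c)$. Then $f=f\ast\phi$ and $\nabla f=f\ast\nabla\phi$. The rapid decay of $\nabla\phi$ gives $|\nabla f(x-z)|\leq C f^{*}_{0,r}(x)(1+|z|)^{d/r}$, that is, $(\nabla f)^{*}_{0,r}\leq C f^{*}_{0,r}$.
\item Fix $\delta\in(0,1)$ and $z$. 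The mean value theorem on the ball $\mathscr{B}(x-z,\delta)$ gives
\[
|f(x-z)|\leq \delta\sup_{|w|\leq\delta}|\nabla f(x-z-w)|+\Big(\frac{1}{|\mathscr{B}(x-z,\delta)|}\int_{\mathscr{B}(x-z,\delta)}|f|^r\Big)^{1/r}.
\]
Here we use that $|f(x-z)|\le |f(y)|+\delta\sup|\nabla f|$ for every $y$ in the ball, and take the $L^r$-average over $y$. For $r<1$ the $L^r$-average requires care: we use $\min(a+b)\le$ average, which is fine since we bound $|f(x-z)|-\delta\sup|\nabla f|\le |f(y)|$ pointwise.
\item The ball $\mathscr{B}(x-z,\delta)$ sits inside $\mathscr{B}(x,|z|+1)$. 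Hence the average is at most $C\delta^{-d/r}(1+|z|)^{d/r}[M(|f|^r)(x)]^{1/r}$.
\item Divide by $(1+|z|)^{d/r}$ and take the supremum in $z$. This yields $f^{*}_{0,r}\leq C\delta f^{*}_{0,r}+C\delta^{-d/r}[M(|f|^r)]^{1/r}$.
\item Finally, $f^{*}_{0,r}(x)<\infty$ because $f\in L^p$ is band-limited, hence bounded by Lemma \ref{lem:Bernstein}. Choosing $\delta$ small, we absorb the first term into the left-hand side.
\end{itemize}

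\emph{Step 2 (splitting).} We write
\[
|\psi_k\ast f(x)|\leq\int|\psi_k(y)|\,|f(x-y)|^{1-\theta}|f(x-y)|^{\theta}\,dy
\]
and bound $|f(x-y)|^\theta\leq (f^{*}_{j,r}(x))^\theta(1+2^j|y|)^{\theta d/r}$. Since $j>k-L$, we have
\[
1+2^j|y|\leq 2^{L}2^{j-k}(1+2^k|y|).
\]
Therefore
\[
|\psi_k\ast f(x)|\leq C2^{(j-k)\theta d/r}(f^{*}_{j,r}(x))^{\theta}\int 2^{kd}|\psi(2^ky)|(1+2^k|y|)^{\theta d/r}|f(x-y)|^{1-\theta}dy .
\]

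\emph{Step 3 (maximal function).} Since $\theta\le1$, we have $(1+|z|)^{\theta d/r}\leq C(1+|z|^{d/r})$. The hypothesis then bounds the kernel $|\psi(z)|(1+|z|)^{\theta d/r}$ by $Cg(z)$, with $g$ radial, decreasing and integrable. Lemma \ref{lem:miaoguji}, applied to $|f|^{1-\theta}$, bounds the integral by $C\|g\|_{L^1}M(|f|^{1-\theta})(x)$. Combining this with Step 1 gives the first claim.

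For the second claim, with $k-L<j\leq k+L$, we take $\theta=1$. Then $M(|f|^{0})\equiv1$ and $2^{(j-k)d/r}\leq 2^{Ld/r}$, which yields $|\psi_k\ast f|\leq C[M(|f|^r)]^{1/r}$.

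The main obstacle is Step 1. It requires the finiteness of $f^{*}_{j,r}$ to justify the absorption, and it is the only place where the spectral support of $f$ enters. The constants there must be kept uniform in $j$, which is exactly what the scaling reduction to $j=0$ provides.
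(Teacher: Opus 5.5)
Your argument is correct. It is essentially the standard proof from the sources the paper cites; the paper quotes this lemma from \cite{MR4240785,MR781540} and gives no proof of its own. Both you and those sources prove Peetre's maximal inequality $f^*_{j,r}\lesssim [M(|f|^r)]^{1/r}$ by scaling, the mean value theorem and absorption. Then both split $|f|=|f|^{1-\theta}|f|^{\theta}$, use $1+2^j|y|\le 2^{L}2^{j-k}(1+2^k|y|)$, and apply the radial-majorant bound (Lemma \ref{lem:miaoguji}) to $g$. Your use of Bernstein's inequality to get $f^*_{0,r}(x)\le\|f\|_{L^\infty}<\infty$ before absorbing is a valid shortcut, available because $f\in L^p$ is band-limited rather than a general tempered distribution.
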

	
	Finally, we recall the following Morse type inequalities. 
	
	\begin{lemma}\label{lem:product}
		\textup{(\cite{MR1419319})} Assume $s_1\leq s_2$ and $s_1+s_2>d \max(0,\frac{2}{p}-1)$. 
		Let 
		$s_2>\frac{d}{p}$ and $q\geq\max(q_1,q_2)$. 
		In the case $s_2>s_1$, we have 
		\[  \|fg\|_{X^{s_1}_{p,q_1}(\mathbb{R}^d)}\leq \|f\|_{X^{s_1}_{p,q_1}(\mathbb{R}^d)}\|g\|_{X^{s_2}_{p,q_2}(\mathbb{R}^d)}.\]
		If $s_1=s_2$, then 
		\[ \|fg\|_{X^{s_1}_{p,q}(\mathbb{R}^d)}\leq \|f\|_{X^{s_1}_{p,q_1}(\mathbb{R}^d)}\|g\|_{X^{s_1}_{p,q_2}(\mathbb{R}^d)}.\]
	\end{lemma}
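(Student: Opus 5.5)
We write $fg=T_fg+T_gf+R(f,g)$ with Bony's paraproduct decomposition, $T_fg=\sum_k S_{k-1}f\,\Delta_kg$ and $R(f,g)=\sum_{|k-k'|\le1}\Delta_kf\,\Delta_{k'}g$. Each of the three pieces is then estimated in $X^{s_1}_{p,q_1}$ (or $X^{s_1}_{p,q}$ when $s_1=s_2$) using the Nikol'skij representations, Lemmas \ref{lem:Nik1} and \ref{lem:Nik2}. In the $F$-case the pointwise $l^q$ sums are controlled by the Fefferman--Stein inequality (Lemma \ref{lem:Fguji}) through the pointwise bounds $|S_{k-1}f|,|\Delta_kf|\lesssim M(\cdot)$ of Lemma \ref{lem:miaoguji}. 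In the $B$-case one only needs the scalar maximal bound (Lemma \ref{lem:max}) or plain Hölder/Bernstein.

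\emph{Term $T_gf$.} Its blocks have Fourier support in annuli $\sim 2^k$. Since $s_2>\frac dp$, $X^{s_2}_{p,q_2}\hookrightarrow L^\infty$ by Lemma \ref{lem:Triebel-Lizorkin-properties}(iii), so $|S_{k-1}g|\le C\|g\|_{L^\infty}\le C\|g\|_{X^{s_2}_{p,q_2}}$ uniformly in $k$. Lemma \ref{lem:Nik1} then gives
\[
\|T_gf\|_{X^{s_1}_{p,q_1}}\lesssim \|g\|_{L^\infty}\,\big\|\|2^{ks_1}\Delta_kf\|_{l^{q_1}}\big\|_{L^p}\lesssim\|f\|_{X^{s_1}_{p,q_1}}\|g\|_{X^{s_2}_{p,q_2}}.
\]

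\emph{Term $T_fg$.} Again the blocks are annular. We must bound $2^{ks_1}|S_{k-1}f|\,|\Delta_kg|$.
\begin{itemize}
\item If $s_1<\frac dp$: put $S_{k-1}f$ in $L^{r}$ with $\frac1r=\frac1p-\frac{s_1}{d}$, using the Sobolev-type embedding $X^{s_1}_{p,q_1}\hookrightarrow X^{0}_{r,\cdot}$ and Lemma \ref{lem:Sjguji} for the negative-index sums. Put $\Delta_kg$ in the complementary space $X^{s_2-d/p+s_1}$, which is again an embedding from $X^{s_2}_{p,q_2}$ by Lemma \ref{lem:Triebel-Lizorkin-properties}(i), and finish with Hölder.
\item If $s_1>\frac dp$: $f\in L^\infty$ and the argument is the same as for $T_gf$.
\item If $s_1=\frac dp$: the gap $s_2>s_1$ lets us absorb a $2^{k\varepsilon}$ loss, since $\|S_{k-1}f\|_{L^\infty}\lesssim 2^{k\varepsilon}\|f\|_{X^{s_1}}$.
\end{itemize}
When $s_1=s_2$ the same scheme applies, but the output is only in $l^q$ with $q\ge\max(q_1,q_2)$. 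This is why the index $q$ appears in that case.

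\emph{Remainder $R(f,g)$.} This is the main obstacle, since $\Delta_j R$ receives contributions from all $k\ge j-2$ and only balls $|\xi|\le c2^k$ are available. Lemma \ref{lem:Nik2} therefore needs a positive total index. When $p\ge2$ the condition reads $s_1+s_2>0$, and we apply Lemma \ref{lem:Nik2} with index $s_1+s_2$. Bounding $2^{k(s_1+s_2)}|\Delta_kf\,\tilde\Delta_kg|$ by $2^{ks_1}|\Delta_kf|\cdot\sup_k2^{ks_2}|\tilde\Delta_k g|$ gives $R\in X^{s_1+s_2}_{p,q_1}$, since $s_2>\frac dp\ge0$ lets us use $\sup_k 2^{ks_2}|\tilde\Delta_kg|$. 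We then embed down to $X^{s_1}_{p,q_1}$. When $p<2$ we first map into $L^{p/2}$-type integrability: by Hölder, $\Delta_kf\,\tilde\Delta_kg\in L^{p/2}$-scale with index $s_1+s_2$, which is admissible for Lemma \ref{lem:Nik2} precisely when $s_1+s_2>d(\frac2p-1)$. We then return via the embedding $X^{s_1+s_2}_{p/2,\cdot}\hookrightarrow X^{s_1+s_2-d/p}_{p,\cdot}$, whose index is at least $s_1$ because $s_2>\frac dp$.

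The delicate point I expect to be hardest is implementing this $p<2$ step in the Triebel--Lizorkin scale. There the Nikol'skij estimate for ball-supported pieces must be done at exponent $p/2<1$ (quasi-Banach), using the $r$-maximal version Lemma \ref{lem:guoguji} with $r<p/2$ in place of Lemma \ref{lem:Fguji}. If that fails, I would fall back on duality with $X^{-s_1}_{p',q_1'}$.
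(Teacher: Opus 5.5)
The paper does not prove this lemma; it quotes it from Runst--Sickel. So I compare your sketch with the standard paraproduct argument. The paper itself carries out the remainder step of that argument for $I_2$, $I_3$ in the proof of Theorem~\ref{prop:commutator-estimates}. Your treatment of $T_gf$, of $T_fg$ for $s_1\ge d/p$, and of the $q$-bookkeeping when $s_1=s_2$ is fine. \textbf{The remainder step for $2\le p<\infty$, however, is wrong as written.}

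From $2^{k(s_1+s_2)}|\Delta_kf\,\tilde\Delta_kg|\le 2^{ks_1}|\Delta_kf|\cdot\sup_k2^{ks_2}|\tilde\Delta_kg|$ you cannot get $R(f,g)\in X^{s_1+s_2}_{p,q_1}$. Both factors are only controlled in $L^p$, so H\"older puts the right-hand side of Lemma~\ref{lem:Nik2} in $L^{p/2}$, not $L^p$. To stay in $L^p$ you would need $\sup_k2^{ks_2}|\tilde\Delta_kg|\in L^\infty$, i.e.\ $g\in B^{s_2}_{\infty,\infty}$, whereas $X^{s_2}_{p,q_2}$ only yields $B^{s_2-d/p}_{\infty,\infty}$. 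With that, Lemma~\ref{lem:Nik2} in $L^p$ requires $s_1+s_2>d/p$, strictly more than the hypothesis $s_1+s_2>0$ (e.g.\ $d=3$, $p=2$, $s_2=2$, $s_1=-1.9$).

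The intermediate claim is in fact false. Take $\hat\phi\ge0$ supported in a small ball, and set $f_k=2^{k(d/p-s_1)}\phi(2^kx)e^{i2^kx_1}$ and $g_k=2^{k(d/p-s_2)}\phi(2^kx)e^{-i2^kx_1}$. These are bounded in $X^{s_1}_{p,q_1}$ and $X^{s_2}_{p,q_2}$ and satisfy $f_kg_k=R(f_k,g_k)$, yet $\|f_kg_k\|_{X^{s_1+s_2}_{p,q_1}}\gtrsim 2^{kd/p}$.

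The correct chain is the one you wrote for $p<2$, and it is exactly \eqref{eq:I_2}, \eqref{eq:I_3}. Lemma~\ref{lem:Nik2} at exponent $p/2\ge1$ needs only $s_1+s_2>0$ and gives $R\in X^{s_1+s_2}_{p/2,q_1}$. The Sobolev embedding then gives $X^{s_1+s_2}_{p/2,q_1}\hookrightarrow X^{s_1+s_2-d/p}_{p,q_1}\hookrightarrow X^{s_1}_{p,q_1}$, and the last step is where $s_2>d/p$ enters.

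For $1\le p<2$, your $L^{p/2}$ route needs a Nikol'skij lemma for ball-supported pieces at an exponent below $1$, together with an $r$-maximal Fefferman--Stein bound. None of this is among the stated tools, since Lemma~\ref{lem:Nik2} requires exponent at least $1$. The difficulty you anticipated disappears if you argue as in \eqref{eq:I_2'}, \eqref{eq:I_3'}:
\begin{enumerate}
\item embed $X^{s_2}_{p,q_2}\hookrightarrow X^{s_2-d/p+d/p'}_{p',\infty}$;
\item apply H\"older in $L^1$ with $f$ in $L^p$ and $g$ in $L^{p'}$;
\item apply Lemma~\ref{lem:Nik2} in $L^1$ at index $\sigma=s_1+s_2-d(\frac2p-1)$, which is positive precisely under the hypothesis;
\item return via $X^{\sigma}_{1,q_1}\hookrightarrow X^{\sigma-d/p'}_{p,q_1}=X^{s_1+s_2-d/p}_{p,q_1}\hookrightarrow X^{s_1}_{p,q_1}$.
\end{enumerate}

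A smaller point concerns $T_fg$. Your split with $\frac1r=\frac1p-\frac{s_1}{d}$ only makes sense for $0<s_1<d/p$. Lemma~\ref{lem:Sjguji} paired with $\|g\|_{L^\infty}$ covers $s_1<0$. But $s_1=0$ is left open: you would need $\sup_k|S_{k-1}f|$ in $L^p$, which fails for $f\in F^0_{p,q_1}$ with $q_1>2$. All $s_1<d/p$ are handled at once by Bernstein: $\|S_{k-1}f\|_{L^\infty}\lesssim\sum_{k'\le k-2}2^{k'd/p}\|\Delta_{k'}f\|_{L^p}\lesssim 2^{k(d/p-s_1)}\|f\|_{B^{s_1}_{p,\infty}}$. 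Hence $2^{ks_1}|S_{k-1}f\,\Delta_kg|\lesssim\|f\|_{X^{s_1}_{p,q_1}}\,2^{kd/p}|\Delta_kg|$, and $\|g\|_{X^{d/p}_{p,q_1}}\lesssim\|g\|_{X^{s_2}_{p,q_2}}$ because $s_2>d/p$. Lemma~\ref{lem:Nik1} then finishes.
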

	\begin{lemma}\label{lem:product-1}
		\textup{(\cite{MR1419319})}
		Assume $s<d/p$ and 
		 $s+\frac{d}{p}>d\max(0,\frac{2}{p}-1)$. 
		Then 
		\[ \|fg\|_{X^{s}_{p,q}(\mathbb{R}^d)}\leq C \|f\|_{X^s_{p,q}(\mathbb{R}^d)}(\|g\|_{X^{d/p}_{p,\infty}(\mathbb{R}^d)}+\|g\|_{L^\infty(\mathbb{R}^d)}), \]
		holds for all $f\in X^s_{p,q}(\mathbb{R}^d)$ and $g\in X^{d/p}_{p,\infty}(\mathbb{R}^d)\cap L^\infty(\mathbb{R}^d)$.
	\end{lemma}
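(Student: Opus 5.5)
The plan is to use Bony's decomposition
\[ fg=T_gf+T_fg+R(f,g),\qquad T_gf=\sum_k S_{k-1}g\,\Delta_kf,\quad R(f,g)=\sum_{|k-k'|\leq1}\Delta_kf\,\Delta_{k'}g, \]
and to estimate each piece with the Nikol'skij representations (Lemmas \ref{lem:Nik1} and \ref{lem:Nik2}). Throughout, the $F$- and $B$-cases run in parallel. One swaps the order of the $l^q$ and $L^p$ norms, and uses Lemma \ref{lem:max} in place of Lemma \ref{lem:Fguji}.

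\textbf{The term $T_gf$.} Each block $S_{k-1}g\,\Delta_kf$ has Fourier support in an annulus $\sim2^k$, so Lemma \ref{lem:Nik1} applies for every $s\in\mathbb{R}$. Since $\|\tilde h\|_{L^1}<\infty$, we have the pointwise bound $|S_{k-1}g(x)|\leq C\|g\|_{L^\infty}$. Hence $\|2^{ks}S_{k-1}g\,\Delta_kf\|_{l^q}\leq C\|g\|_{L^\infty}\|2^{ks}\Delta_kf\|_{l^q}$ pointwise, which gives $\|T_gf\|_{X^s_{p,q}}\leq C\|g\|_{L^\infty}\|f\|_{X^s_{p,q}}$.

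\textbf{The term $T_fg$.} Again Lemma \ref{lem:Nik1} applies, and the task is to bound $\big\|\|2^{ks}S_{k-1}f\,\Delta_kg\|_{l^q}\big\|_{L^p}$. Here the hypothesis $s<d/p$ enters. I would write
\[ 2^{ks}|S_{k-1}f|\,|\Delta_kg|\leq\sum_{l\leq k-2}2^{(k-l)s}\,2^{ls}|\Delta_lf|\;|\Delta_kg|. \]
\begin{itemize}
\item If $s<0$, it suffices to use $|\Delta_kg|\leq C\|g\|_{L^\infty}$ together with Lemma \ref{lem:Sjguji}.
\item If $0\leq s<d/p$, the growing factor $2^{(k-l)s}$ must be absorbed by the regularity of $g$. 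I would use $\sup_k2^{kd/p}|\Delta_kg|\in L^p$ (from $g\in F^{d/p}_{p,\infty}$), and also Lemma \ref{lem:guoguji}. That lemma, applied to $\Delta_lf\in L^p_{\mathscr{B}(0,c2^l)}$, trades powers $2^{(k-l)\theta d/r}$ for maximal functions $M(|\Delta_lf|^r)^{\theta/r}$.
\end{itemize}
In the second case, choosing $\theta d/r$ slightly larger than $s$ while keeping the $d/p$-scaling makes the sum in $l$ a convergent discrete convolution, because $s-d/p<0$. One then applies Hölder in $x$, together with Lemma \ref{lem:Fguji} (or Lemma \ref{lem:max} in the Besov case). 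The target bound is $C\|f\|_{X^s_{p,q}}(\|g\|_{X^{d/p}_{p,\infty}}+\|g\|_{L^\infty})$.

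\textbf{The remainder $R(f,g)$.} The blocks $\Delta_kf\,\tilde\Delta_kg$ have Fourier support only in balls $\mathscr{B}(0,c2^k)$, so Lemma \ref{lem:Nik2} requires a positive regularity index. I would estimate $R(f,g)$ in an auxiliary space $X^{\sigma}_{r,q}$ and then return to $X^s_{p,q}$ via the Sobolev-type embedding of Lemma \ref{lem:Triebel-Lizorkin-properties}(i).
\begin{itemize}
\item If $p\geq2$, take $\frac1r=\frac2p$ and $\sigma=s+\frac dp>0$. Hölder gives $\|2^{k(s+d/p)}\Delta_kf\,\tilde\Delta_kg\|_{L^r}$ bounded by $\|f\|_{X^s_{p,q}}\|g\|_{X^{d/p}_{p,\infty}}$, pointwise in the $l^q$ sense for $F$. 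Then $X^{s+d/p}_{p/2,q}\hookrightarrow X^s_{p,q}$.
\item If $p<2$, take $r=1$ and reduce the index by $d(\frac2p-1)$. This is legitimate exactly under $s+\frac dp>d(\frac2p-1)$, which is the stated hypothesis. Then embed $X^{\sigma}_{1,q}\hookrightarrow X^s_{p,q}$.
\end{itemize}

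\textbf{Main obstacle.} I expect the hardest step to be $T_fg$ in the range $0\leq s<d/p$ for the Triebel--Lizorkin norm. There the $l^q$-sum must be kept inside $L^p$, while $g$ carries only an $l^\infty$ index. My first attempt would be the Peetre-type pointwise control of Lemma \ref{lem:guoguji}, combined with the discrete Young inequality in $k-l$. If that route fails, I would fall back on the embedding $F^s_{p,q}\hookrightarrow B^{s-d/p}_{\infty,\infty}$, applied only to the tail part of $S_{k-1}f$.
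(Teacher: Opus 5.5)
The paper gives no proof of this lemma; it is quoted from \cite{MR1419319}, so I compare your proposal with the standard argument.

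Your treatment of $T_gf$ is fine. So is your treatment of $R(f,g)$, which matches how the paper handles $I_2,I_3$; for $p<2$ the H\"older pairing is $L^p\times L^{p'}$ together with $F^{d/p}_{p,\infty}\hookrightarrow F^{d/p'}_{p',\infty}$, as in \eqref{eq:I_2'}. The case $s<0$ of $T_fg$ is also fine.

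\textbf{The gap.} It is $T_fg$ for $0\le s<d/p$ in the Triebel--Lizorkin case. The mechanism you describe does not close there.
\begin{itemize}
\item Lemma \ref{lem:guoguji} cannot absorb the factor $2^{(k-l)s}$. Its factor $2^{(j-k)\theta d/r}$ is available only for $j>k-L$, where it is bounded below. For $j\le k+L$ it gives just $[M(|\cdot|^r)]^{1/r}$. So it yields no decay as $k-l\to\infty$; it is a price paid when a coarse kernel hits a finer-frequency function, as in $I_1$ or $I_2+I_3$.
\item The decay you need comes from $|\Delta_kg|\le 2^{-kd/p}G$ with $G=\sup_k2^{kd/p}|\Delta_kg|$. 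Once $G$ is placed in $L^p$, H\"older forces the $f$-factor into $L^\infty$. You would then need $\sup_x\big\|2^{k(s-d/p)}S_{k-1}f(x)\big\|_{l^q}\lesssim\|f\|_{F^s_{p,q}}$.
\item This bound is false for $q<p$. Take $f=\sum_{l\ge1}a_l2^{l(d/p-s)}\psi(2^l\cdot)$ with $a_l\ge0$, $a\in l^p\setminus l^q$, and $\hat\psi\ge0$ supported where $\varphi\equiv1$. Then $\|f\|_{F^s_{p,q}}\lesssim\|a\|_{l^p}$, while the left side at $x=0$ is $\gtrsim\|a\|_{l^q}$. (The bound does hold for $B^s_{p,q}$, and for $F^s_{p,q}$ with $q\ge p$, via $B^s_{p,q}\hookrightarrow B^{s-d/p}_{\infty,q}$.)
\item Your fallback through $B^{s-d/p}_{\infty,\infty}$ fails for another reason. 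It leaves you needing $\|2^{kd/p}\Delta_kg\|_{l^q}$, but $g$ is only in $F^{d/p}_{p,\infty}$. So the $l^q$-summability in $k$ is lost for $q<\infty$.
\end{itemize}

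\textbf{The fix.} Split the integrability between the two factors and apply Sobolev embeddings to both, exactly as the paper does for $I_6$ in \eqref{eq:I_6-sxiaoyu}.
\begin{itemize}
\item Choose $p_2\in(p,\infty)$ so close to $p$ that $s<d/p_2$; this is possible since $s<d/p$. Set $\frac1{p_1}=\frac1p-\frac1{p_2}$.
\item Apply Lemma \ref{lem:Nik1} and H\"older.
\item Apply Lemma \ref{lem:Sjguji} at the negative index $s-d/p_2$.
\item Apply Lemma \ref{lem:Triebel-Lizorkin-properties}(i) twice: $F^s_{p,q}\hookrightarrow F^{s-d/p_2}_{p_1,q}$, since $s-\frac dp=(s-\frac d{p_2})-\frac d{p_1}$ and $p<p_1$; and $F^{d/p}_{p,\infty}\hookrightarrow F^{d/p_2}_{p_2,\infty}$.
\end{itemize}
This gives
\[
\|T_fg\|_{F^s_{p,q}}\lesssim\Big\|\big\|2^{k(s-\frac{d}{p_2})}S_{k-1}f\big\|_{l^q}\Big\|_{L^{p_1}}\Big\|\sup_k2^{k\frac{d}{p_2}}|\Delta_kg|\Big\|_{L^{p_2}}\lesssim\|f\|_{F^{s-\frac{d}{p_2}}_{p_1,q}}\|g\|_{F^{\frac{d}{p_2}}_{p_2,\infty}}\lesssim\|f\|_{F^s_{p,q}}\|g\|_{F^{\frac{d}{p}}_{p,\infty}}.
\]
This covers every $s<d/p$ at once, with no case split at $s=0$ and no $L^\infty$ norm of $g$ in this term. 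The Besov version is identical with the norms taken in the other order. With this step in place of your $T_fg$ argument, the rest of your outline gives a complete proof.
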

	\begin{remark}\label{remark:product-critical}
		 For the Triebel-Lizorkin case with $p=1,s=d-1$ and $d\geq 2$, a sharper form of Lemma \textup{\ref{lem:product-1}}  is available. Since the embedding $F^d_{1,\infty}\hookrightarrow L^\infty$ is valid, the term $\|g\|_{L^\infty}$ is redundant. Moreover, note that $\|g\|_{F^d_{1,\infty}}\leq \|g\|_{F^d_{1,q}}$. Therefore, one can further get
		\[ \|fg\|_{F^{d-1}_{1,q}(\mathbb{R}^d)}\leq C \|f\|_{F^{d-1}_{1,q}(\mathbb{R}^d)}\|g\|_{F^{d}_{1,q}(\mathbb{R}^d)},\qquad  \forall\ d\geq 2. \]
		However, the above estimate seems not to be valid for $d=1$.
	\end{remark}

	\section{Commutator estimates}
	This section is devoted to the derivation of the commutator estimates Theorem \ref{prop:commutator-estimates}.
	
	\begin{proof}[Proof of Theorem \ref{prop:commutator-estimates}]
		 Let us firstly recall Bony's para-product decomposition \cite{MR631751} as follows:
		\[ vf=T_vf+T_fv+R(v,f), \]
		where
		\[ T_vf=\sum_{k\leq j-2}\Delta_kv\Delta_jf=\sum_{j\in\mathbb{Z}}S_{j-1}v\Delta_jf, \]
		\[ R(v,f)=\sum_{j\in\mathbb{Z}}\Delta_jv\tilde{\Delta}_jf\quad\text{with}\quad\tilde{\Delta}_j\triangleq\Delta_{j-1}+\Delta_{j}+\Delta_{j+1}. \]
		In order to show that only the gradient part of $v$ is involved in the estimates, we shall split $v$ into low and high frequencies: $v=S_1v+\tilde{v}$. Obviously, there exists a constant $C$ such that 
		\[ \|S_1\nabla v\|_{X^s_{p,q}}\leq C \|\nabla v\|_{X^s_{p,q}}\quad\text{and}\quad\|\nabla \tilde{v}\|_{X^s_{p,q}}\leq C\|\nabla v\|_{X^s_{p,q}}. \]
		On the other hand, Remark \ref{re:1} ensures
		\[ \|\nabla\tilde{v}\|_{X^{s-1}_{p,q}}\approx\|\tilde{v}\|_{X^s_{p,q}}. \]
		By the Einstein convention on the summation over repeated indices $i\in\{1,\cdots,d\}$, and the Bony para-product decomposition, one can see
		\begin{align*}
			[v,\Delta_j]\cdot \nabla f
			&=[\tilde{v}^i,\Delta_j]\partial_i f+[S_0v^i,\Delta_j]\partial_i f\\
			&=[T_{\tilde{v}^i},\Delta_j]\partial_if-\Delta_j\partial_iR(\tilde{v}^i,f)+\Delta_jR(\textup{div}\tilde{v},f)+T_{\Delta_j\partial_if}\tilde{v}^i\\
			&\quad+R(\tilde{v}^i,\Delta_j\partial_if)-\Delta_jT_{\partial_if}\tilde{v}^i+[S_1v^i,\Delta_j]\partial_i f\\
			&\triangleq I_1+I_2+I_3+I_4+I_5+I_6+I_7.
		\end{align*}
		We first consider the case of Triebel-Lizorkin spaces.
		
		\textbf{Bounds for $\Big\|\big\|2^{js}|I_1|\big\|_{l^q}\Big\|_{L^p}$:}
		
		For $(p,q) \in (1,\infty)\times(1,\infty]$ or $p=q=\infty$, using \eqref{a.o.c-1}, integration by parts, first-order Taylor's formula and Lemma \ref{lem:miaoguji}, we have
		\begin{align*}
			|I_1|&=\left|\sum_{m\in\mathbb{Z}}S_{m-1}\tilde{v}^i\Delta_m\Delta_j\partial_if-\Delta_j(\sum_{m\in\mathbb{Z}}S_{m-1}\tilde{v}^i\Delta_m\partial_if)\right|\\
			&=\Bigg|\sum_{|m-j|\leq 4}S_{m-1}\tilde{v}^i(x)\Delta_m2^{jd}\int_{\mathbb{R}^d}h\left(2^j(x-y)\right)\partial_if(y)dy\\
			&\mathrel{\phantom{=}}-2^{jd}\int_{\mathbb{R}^d}h\left(2^j(x-y)\right)\left(S_{m-1}\tilde{v}^i(y)\Delta_m\partial_if(y)\right)dy\Bigg|\\
			&=\left|\sum_{|m-j|\leq 4}\int_{\mathbb{R}^d}\left(S_{m-1}\tilde{v}^i(x)-S_{m-1}\tilde{v}^i(y)\right)2^{jd}h\left(2^j(x-y)\right)\partial_i\Delta_mf(y)dy\right|\\
			&=\Bigg|\sum_{|m-j|\leq 4}\int_{\mathbb{R}^d}-\left(\partial_iS_{m-1}\tilde{v}^i(x)-\partial_iS_{m-1}\tilde{v}^i(y)\right)2^{jd}h\left(2^j(x-y)\right)\Delta_mf(y)\\
			&\mathrel{\phantom{=}}-\left(S_{m-1}\tilde{v}^i(x)-S_{m-1}\tilde{v}^i(y)\right)2^{jd+j}(\partial_ih)\left(2^j(x-y)\right)\Delta_mf(y)dy\Bigg|\\
			&\lesssim\left\|\nabla v\right\|_{L^\infty}\sum_{|m-j|\leq 4}\left|\int_{\mathbb{R}^d}2^{jd}h\left(2^j(x-y)\right)\Delta_mf(y)dy\right|\\
			&\mathrel{\phantom{=}}+\left\|\nabla S_{m-1}v\right\|_{L^\infty}\sum_{|m-j|\leq 4}\left|\int_{\mathbb{R}^d}2^{jd+j}|x-y|(\partial_ih)\left(2^j(x-y)\right)\Delta_mf(y)dy\right|\\
			&\lesssim\sum_{|m-j|\leq 4}\|\nabla v\|_{L^\infty}M(|\Delta_mf|)(x)+\sum_{|m-j|\leq 4}\|\nabla S_{m-1} v\|_{L^\infty}M(|\Delta_mf|)(x)\\
			&\lesssim\sum_{|m-j|\leq 4}\|\nabla v\|_{L^\infty}M(|\Delta_mf|)(x).	   	
		\end{align*}
		Thanks to Young's inequality and Lemma \ref{lem:Fguji}, one infers
		\begin{align}
			\Big\|\big\|2^{js}|I_1|\big\|_{l^q}\Big\|_{L^p}
			&\lesssim\|\nabla v\|_{L^\infty}\Bigg\|\bigg\|2^{js}\sum_{|m-j|\leq 4}M(|\Delta_mf|)(x)	\bigg\|_{l^q}\Bigg\|_{L^p}\nonumber\\
			&=\|\nabla v\|_{L^\infty}\Bigg\|\bigg\|\sum_{|m-j|\leq 4}2^{(j-m)s}M(2^{ms}|\Delta_mf|)(x)	\bigg\|_{l^q}\Bigg\|_{L^p}\nonumber\\
			&\lesssim\|\nabla v\|_{L^\infty}\sum_{j\in\mathbb{Z}}2^{js}\chi_{\{|j|\leq 4\}}\Big\|\big\|M(2^{ms}|\Delta_mf|)(x)	\big\|_{l^q}\Big\|_{L^p}\nonumber\\
			&\lesssim\sum_{j\in\mathbb{Z}}2^{js}\chi_{\{|j|\leq 4\}}\|\nabla v\|_{L^\infty}\Big\|\big\|2^{ms}|\Delta_mf|	\big\|_{l^q}\Big\|_{L^p}\nonumber\\
			&\lesssim\sum_{j\in\mathbb{Z}}2^{js}\chi_{\{|j|\leq 4\}}\|\nabla v\|_{L^\infty}\|f\|_{F^s_{p,q}}\label{eq:I_1-Remark}\\
			&\lesssim\|\nabla v\|_{L^\infty}\|f\|_{F^s_{p,q}}\label{eq:I_1}.
		\end{align}
		For $p=1$ with $q\in[1,\infty]$	or $q=1$ with $p\in(1,\infty)$, we first note that 
		\begin{align*}
			|I_1|&=\left|\sum_{|m-j|\leq 4}\int \left(S_{m-1}\tilde{v}^i(x)-S_{m-1}\tilde{v}^i(y)\right)
			2^{jd}h\left(2^j\left(x-y\right)\right)\Delta_m\partial_if(y)dy\right|\\
			&\lesssim\left|\sum_{|m-j|\leq 4}\int \left(S_{m-1}\tilde{v}^i(x)-S_{m-1}\tilde{v}^i(y)\right)
			2^{j(d+1)}(\partial_ih)\left(2^j\left(x-y\right)\right)\Delta_mf(y)dy\right|\\
			&\mathrel{\phantom{=}}+\left|\sum_{|m-j|\leq 4}\int \left(S_{m-1}\partial_i\tilde{v}^i(x)-S_{m-1}\partial_i\tilde{v}^i(y)\right)
			2^{jd}h\left(2^j\left(x-y\right)\right)\Delta_mf(y)dy\right|\\
			&\lesssim\left\|\nabla S_{m-1}v\right\|_{L^\infty
			}\left|\sum_{|m-j|\leq 4}\int\left|x-y\right| 
			2^{j(d+1)}\nabla h\left(2^j\left(x-y\right)\right)\Delta_mf(y)dy\right|\\
			&\mathrel{\phantom{=}}+\|\nabla v\|_{L^\infty}\left|\sum_{|m-j|\leq 4}\int 
			2^{jd} h\left(2^j\left(x-y\right)\right)\Delta_mf(y)dy\right|\\
			&\lesssim\sum_{|m-j|\leq 4}\left\|\nabla v\right\|_{L^\infty}\left[M(|\Delta_mf|^r)(x)\right]^{\frac{1}{r}
			},
		\end{align*}
		here we used first-order Taylor's formula and Lemma \ref{lem:guoguji} with $\theta=1$ and $r\in(0,1)$. Therefore, it follows from Young’s inequality and  Lemma \ref{lem:Fguji} that
		\begin{align}
			\Big\|\big\|2^{js}|I_1|\big\|_{l^q}\Big\|_{L^p} 
			&\lesssim \|\nabla v\|_{L^\infty} \Bigg\|\bigg\|\sum_{|m-j|\leq 4} 2^{(j-m)s} \left[M(|2^{ms}\Delta_m f|^r)(x)\right]^{\frac{1}{r}}\bigg\|_{l^q}\Bigg\|_{L^p} \nonumber\\
			&\lesssim \|\nabla v\|_{L^\infty}\sum_{j\in\mathbb{Z}}2^{js}\chi_{\{|j|\leq 4\}}\Big\|\big\|[M(|2^{ms}\Delta_mf|^r)(x)]^{\frac{1}{r}}\big\|_{l^q}\Big\|_{L^p}\nonumber\\
			&=\sum_{j\in\mathbb{Z}}2^{js}\chi_{\{|j|\leq 4\}}\|\nabla v\|_{L^\infty}\Big\|\big\|[M(|2^{ms}\Delta_mf|^r)(x)]\big\|_{l^{\frac{q}{r}}}\Big\|_{L^{\frac{p}{r}}}^{\frac{1}{r}}\nonumber\\
			&\lesssim\sum_{j\in\mathbb{Z}}2^{js}\chi_{\{|j|\leq 4\}}\|\nabla v\|_{L^\infty}\Big\|\big\|2^{ms}|\Delta_mf|^r(x)\big\|_{l^{\frac{q}{r}}}\Big\|_{L^{\frac{p}{r}}}^{\frac{1}{r}}\nonumber\\
			&\lesssim \sum_{j\in\mathbb{Z}}2^{js}\chi_{\{|j|\leq 4\}}\|\nabla v\|_{L^\infty}\|f\|_{F^s_{p,q}}\label{eq:endpoint-I_1-remark}\\
			&\lesssim\|\nabla v\|_{L^\infty}\|f\|_{F^s_{p,q}}\label{eq:endpoint-I_1}.
		\end{align}
		
		\textbf{Bounds for $\Big\|\big\|2^{js}|I_2|\big\|_{l^q}\Big\|_{L^p}$:}
		
		Appealing to Lemma \ref{lem:Triebel-Lizorkin-properties} $(i)$, Lemma \ref{lem:Nik2}, together with H\"{o}lder's inequality, we deduce that when $s>-1-\frac{d}{p}$ and $p\geq 2$,
		\begin{align}
			\Big\|\big\|2^{js}|I_2|\big\|_{l^q}\Big\|_{L^p}&=\Big\|\big\|2^{js}\Delta_j\partial_iR(\tilde{v}^i,f)\big\|_{l^q}\Big\|_{L^p}=\|\nabla R(\tilde{v},f)\|_{F^s_{p,q}}\leq\|R(\tilde{v},f)\|_{F^{s+1}_{p,q}}\nonumber\\
			&\lesssim \|R(\tilde{v},f)\|_{F^{s+1+\frac{d}{p}}_{\frac{p}{2},\infty}}= \Big\|\sum_{m\in\mathbb{Z}}\Delta_m\tilde{v}\tilde{\Delta}_mf\Big\|_{F^{s+1+\frac{d}{p}}_{\frac{p}{2},\infty}}\nonumber\\
			&\lesssim \Big\|\big\|2^{m(s+1+\frac{d}{p})}\Delta_m\tilde{v}\tilde{\Delta}_mf\big\|_{l^\infty}\Big\|_{L^{\frac{p}{2}}}\nonumber\\
			&\lesssim \Big\|\big\|2^{m(1+\frac{d}{p})}\Delta_m\tilde{v}\big\|_{l^\infty}\big\|2^{ms}\tilde{\Delta}_mf\big\|_{l^\infty}\Big\|_{L^{\frac{p}{2}}}\nonumber\\
			&\lesssim \Big\|\big\|2^{m(1+\frac{d}{p})}\Delta_m\tilde{v}\big\|_{l^\infty}\Big\|_{L^p}\|f\|_{F^s_{p,\infty}}\nonumber\\
			&\lesssim \|\tilde{v}\|_{F^{1+\frac{d}{p}}_{p,\infty}}\|f\|_{F^s_{p,\infty}}\lesssim \|\nabla\tilde{v}\|_{F^{\frac{d}{p}}_{p,\infty}}\|f\|_{F^s_{p,q}}\nonumber\\
			&\lesssim \|\nabla v\|_{F^{\frac{d}{p}}_{p,\infty}}\|f\|_{F^s_{p,q}}\label{eq:I_2}.
		\end{align}
		Now, if $1\leq p<2$, by virtue of the relation $\frac{1}{p}+\frac{1}{p'}=1$ and an analogous estimate to the one above, we obtain that when $s>-1-\frac{d}{p'}$,
		\begin{align}
			\Big\|\big\|2^{js}|I_2|\big\|_{l^q}\Big\|_{L^p}&=\Big\|\big\|2^{js}\Delta_j\partial_iR(\tilde{v}^i,f)\big\|_{l^q}\Big\|_{L^p}=\|\nabla R(\tilde{v},f)\|_{F^s_{p,q}}\leq\|R(\tilde{v},f)\|_{F^{s+1}_{p,q}}\nonumber\\
			&\lesssim \|R(\tilde{v},f)\|_{F^{s+1+\frac{d}{p'}}_{1,\infty}}= \Big\|\sum_{m\in\mathbb{Z}}\Delta_m\tilde{v}\tilde{\Delta}_mf\Big\|_{F^{s+1+\frac{d}{p'}}_{1,\infty}}\nonumber\\
			&\lesssim \Big\|\big\|2^{m(s+1+\frac{d}{p'})}\Delta_m\tilde{v}\tilde{\Delta}_mf\big\|_{l^\infty}\Big\|_{L^1}\nonumber\\
			&\lesssim \Big\|\big\|2^{m(1+\frac{d}{p'})}\Delta_m\tilde{v}\big\|_{l^\infty}\big\|2^{ms}\tilde{\Delta}_mf\big\|_{l^\infty}\Big\|_{L^{1}}\nonumber\\
			&\lesssim \Big\|\big\|2^{m(1+\frac{d}{p'})}\Delta_m\tilde{v}\big\|_{l^\infty}\Big\|_{L^{p'}}\|f\|_{F^s_{p,\infty}}\nonumber\\
			&\lesssim \|\tilde{v}\|_{F^{1+\frac{d}{p'}}_{p',\infty}}\|f\|_{F^s_{p,\infty}}\lesssim \|\tilde{v}\|_{F^{1+\frac{d}{p}}_{p,\infty}}\|f\|_{F^s_{p,q}}\nonumber\\
			&\lesssim \|\nabla v\|_{F^{\frac{d}{p}}_{p,\infty}}\|f\|_{F^s_{p,q}}\label{eq:I_2'}.
		\end{align}
		
		\textbf{Bounds for $\Big\|\big\|2^{js}|I_3|\big\|_{l^q}\Big\|_{L^p}$:}
		
		Similar to \eqref{eq:I_2}, we obtain that for $s>-\frac{d}{p}$ and $p\geq 2$,
		\begin{align}
			\Big\|\big\|2^{js}|I_3|\big\|_{l^q}\Big\|_{L^p}&=\Big\|\big\|2^{js}\Delta_jR(\text{div}\tilde{v},f)\big\|_{l^q}\Big\|_{L^p}=\| R(\text{div}\tilde{v},f)\|_{F^s_{p,q}}\lesssim \|R(\text{div}\tilde{v},f)\|_{F^{s+\frac{d}{p}}_{\frac{p}{2},\infty}}\nonumber\\
			&\lesssim \Big\|\big\|2^{m(s+\frac{d}{p})}\Delta_m\text{div}\tilde{v}\tilde{\Delta}_mf\big\|_{l^\infty}\Big\|_{L^{\frac{p}{2}}}\nonumber\\
			&\lesssim\|\nabla v\|_{F^{\frac{d}{p}}_{p,\infty}}\|f\|_{F^s_{p,q}}\label{eq:I_3}.
		\end{align}
		If $1\leq p<2$, by an estimate analogous to \eqref{eq:I_2'}, we have for $s>-\frac{d}{p'}$,
		\begin{align}
			\Big\|\big\|2^{js}|I_3|\big\|_{l^q}\Big\|_{L^p}&=\Big\|\big\|2^{js}\Delta_jR(\text{div}\tilde{v},f)\big\|_{l^q}\Big\|_{L^p}=\| R(\text{div}\tilde{v},f)\|_{F^s_{p,q}}\lesssim \|R(\text{div}\tilde{v},f)\|_{F^{s+\frac{d}{p'}}_{1,\infty}}\nonumber\\
			&\lesssim \Big\|\big\|2^{m(s+\frac{d}{p'})}\Delta_m\text{div}\tilde{v}\tilde{\Delta}_mf\big\|_{l^\infty}\Big\|_{L^{1}}\nonumber\\
			&\lesssim\|\nabla v\|_{F^{\frac{d}{p}}_{p,\infty}}\|f\|_{F^s_{p,q}}\label{eq:I_3'}.
		\end{align}
		
		\textbf{Bounds for $\Big\|\big\|2^{js}|I_4+I_5|\big\|_{l^q}\Big\|_{L^p}$:}
		
		Note that $S_{m+2}\Delta_jf=0$ if $m\leq j-3$. Then, applying Lemma \ref{lem:Bernstein}, we obtain
		\begin{align}
			|I_4+I_5|&=\big|T_{\partial_i\Delta_jf}\tilde{v}^i+R(\tilde{v}^i,\Delta_j\partial_if)\big|=\Big|\sum_{m\geq j-2}S_{m+2}\partial_i\Delta_jf\Delta_m\tilde{v}^i\Big|\nonumber\\
			&\leq\Big|\sum_{m\geq j-2} \|\nabla \Delta_m\tilde{v}\|_{L^\infty}2^{-m}S_{m+2}\partial_i\Delta_jf\Big|\nonumber\\
			&\leq \|\nabla v\|_{L^\infty}\Big|\sum_{m\geq j-2} 2^{-m}S_{m+2}\partial_i\Delta_jf\Big|\nonumber\\
			&=\|\nabla v\|_{L^\infty}\Big|\sum_{m\geq j-2}\int_{\mathbb{R}^d}\big(2^{-m}\cdot2^{(m+2)d}\tilde{h}(2^{m+2}(x-y))\partial_i\Delta_jf(y)\big)dy\Big|\label{eq:equation60}.
		\end{align}
		For $(p,q) \in (1,\infty)\times(1,\infty]$ or $p=q=\infty$, an application of Lemma \ref{lem:miaoguji} to \eqref{eq:equation60}, followed by Young's inequality and Lemma \ref{lem:Fguji}, yields
		\begin{align}
			\Big\|\big\|2^{js}|I_4+I_5|\big\|_{l^q}\Big\|_{L^p}&\lesssim\|\nabla v\|_{L^\infty}\Big\|\big\|\sum_{m\geq j-2}2^{js-m}M(\partial_i\Delta_jf)\big\|_{l^q}\Big\|_{L^p}\nonumber\\
			&=\|\nabla v\|_{L^\infty}\Big\|\big\|\sum_{m\geq j-2}2^{j(s-1)}\cdot2^{j-m}M(\partial_i\Delta_jf)\big\|_{l^q}\Big\|_{L^p}\nonumber\\
			&\lesssim\left\|\nabla v\right\|_{L^\infty}\sum_{j\in\mathbb{Z}}2^{j}\chi_{\{j\leq 2\}}\Big\|\big\|2^{j(s-1)}M(\partial_i\Delta_jf) \big\|_{l^q}\Big\|_{L^p}\nonumber\\
			&\lesssim\sum_{j\in\mathbb{Z}}2^{j}\chi_{\{j\leq 2\}}\left\|\nabla v\right\|_{L^\infty}\|\nabla f\|_{F^{s-1}_{p,q}}\label{eq:I_4+I_5-remark}\\
			&\lesssim\left\|\nabla v\right\|_{L^\infty}\|f\|_{F^s_{p,q}}\label{eq:I_4+I_5}.
		\end{align}
		
		In the case $p=1$ with $q\in[1,\infty]$	or $q=1$ with $p\in(1,\infty)$, we apply Lemma \ref{lem:guoguji} to \eqref{eq:equation60} and, following an analogous argument, we get
		\begin{align}
			\Big\|\big\|2^{js}|I_4+I_5|\big\|_{l^q}\Big\|_{L^p}&\lesssim\|\nabla v\|_{L^\infty}\Big\|\big\|\sum_{m\geq j-2}2^{js-m}[M(|\partial_i\Delta_jf|^r)]^{\frac{1}{r}}\big\|_{l^q}\Big\|_{L^p}\nonumber\\
			&\lesssim\sum_{j\in\mathbb{Z}}2^{j}\chi_{\{j\leq 2\}}\left\|\nabla v\right\|_{L^\infty}\|\nabla f\|_{F^{s-1}_{p,q}}\label{eq:endpoint-I_4+I_5-remark}\\
			&\lesssim\left\|\nabla v\right\|_{L^\infty}\|f\|_{F^s_{p,q}}\label{eq:endpoint-I_4+I_5}.
		\end{align}
		
		\textbf{Bounds for $\Big\|\big\|2^{js}|I_6|\big\|_{l^q}\Big\|_{L^p}$:}
		
		By Lemma \ref{lem:Nik1}, Hölder's inequality (where $\frac{1}{p}=\frac{1}{p_1}+\frac{1}{p_2}$), Lemma \ref{lem:Sjguji}, and Lemma \ref{lem:Triebel-Lizorkin-properties} $(i)$, it follows that for $s<1+\frac{d}{p}$,
		\begin{align}
			\Big\|\big\|2^{js}|I_6|\big\|_{l^q}\Big\|_{L^p}&=\Big\|\big\|2^{js}\sum_{|m-j|\leq 4}\Delta_j(S_{m-1}\partial_i f\Delta_m\tilde{v}^i)\big\|_{l^q}\Big\|_{L^p}\nonumber\\
			&\lesssim \Big\|\big\|2^{ms}S_{m-1}\partial_if\Delta_m\tilde{v}^i\big\|_{l^q}\Big\|_{L^p}\nonumber\\
			&\lesssim \Big\|\big\|2^{m(s-1-\frac{d}{p_2})}\cdot2^{m(1+\frac{d}{p_2})}S_{m-1}\partial_if\Delta_m\tilde{v}^i\big\|_{l^q}\Big\|_{L^p}\nonumber\\
			&\lesssim \Big\|\big\|2^{m(s-1-\frac{d}{p_2})}S_{m-1}\partial_if\big\|_{l^q}\big\|2^{m(1+\frac{d}{p_2})}\Delta_m\tilde{v}^i\big\|_{l^\infty}\Big\|_{L^p}\nonumber\\
			&\lesssim\|\nabla f\|_{F^{s-1-\frac{d}{p_2}}_{p_1,q}}\|\tilde{v}\|_{F^{1+\frac{d}{p_2}}_{p_2,\infty}}\nonumber\\
			&\lesssim\|\nabla \tilde{v}\|_{F^{\frac{d}{p_2}}_{p_2,\infty}}\|\nabla f\|_{F^{s-1}_{p,q}}\nonumber\\
			&\lesssim\|\nabla v\|_{F^{\frac{d}{p}}_{p,\infty}}\| f\|_{F^{s}_{p,q}}\label{eq:I_6-sxiaoyu}.
		\end{align}
		On the other hand, we observe that
		\begin{align}
			|I_6|&=\Big|\sum_{|m-j|\leq4}\Delta_j(S_{m-1}\partial_if\Delta_m\tilde{v}^i)\Big|\nonumber\\
			&=\Big|\sum_{|m-j|\leq4}\int_{\mathbb{R}^d}\big(2^{jd}h(2^j(x-y))(S_{m-1}\partial_if\Delta_m\tilde{v}^i)(y)\big)dy\Big|.
		\end{align}
		For $(p,q) \in (1,\infty)\times(1,\infty]$ or $p=q=\infty$, using Lemma \ref{lem:miaoguji}, Young's inequality, and Lemma \ref{lem:Fguji}, we obtain
		\begin{align}
			\Big\|\big\|2^{js}|I_6|\big\|_{l^q}\Big\|_{L^p}&\lesssim\|\nabla f\|_{L^\infty}\Big\|\big\|2^{js}\sum_{|m-j|\leq 4}M(\Delta_m\tilde{v}^i)\big\|_{l^q}\Big\|_{L^p}\nonumber\\
			&\lesssim\|\nabla f\|_{L^\infty}\Big\|\big\|\sum_{|m-j|\leq 4}2^{(j-m)s}\cdot2^{ms}M(\Delta_m\tilde{v}^i)\big\|_{l^q}\Big\|_{L^p}\nonumber\\
			&\lesssim\|\nabla f\|_{L^\infty} \sum_{j\in\mathbb{Z}}2^{js}\chi_{\{|j|\leq 4\}}\Big\|\big\|2^{ms}M(\Delta_m\tilde{v}^i)\big\|_{l^q}\Big\|_{L^p}\nonumber\\
			&\lesssim\sum_{j\in\mathbb{Z}}2^{js}\chi_{\{|j|\leq 4\}}\|\nabla f\|_{L^\infty}\|\tilde{v}\|_{F^s_{p,q}}\label{eq:I_6-remark}\\
			&\lesssim\|\nabla f\|_{L^\infty}\|\nabla \tilde{v}\|_{F^{s-1}_{p,q}}\lesssim\|\nabla f\|_{L^\infty}\|\nabla v\|_{F^{s-1}_{p,q}}.\label{eq:I_6}
		\end{align}
		Similarly, for $p=1$ with $q\in[1,\infty]$	or $q=1$ with $p\in(1,\infty)$, using Lemma \ref{lem:guoguji}, Young's inequality, and Lemma \ref{lem:Fguji}, one can get
		\begin{align}
			\Big\|\big\|2^{js}|I_6|\big\|_{l^q}\Big\|_{L^p}&\lesssim\|\nabla f\|_{L^\infty}\Big\|\big\|2^{js}\sum_{|m-j|\leq 4}[M(|\Delta_m\tilde{v}^i|^r)]^{\frac{1}{r}}\big\|_{l^q}\Big\|_{L^p}\nonumber\\
			&\lesssim\sum_{j\in\mathbb{Z}}2^{js}\chi_{\{|j|\leq 4\}}\|\nabla f\|_{L^\infty}\|\nabla \tilde{v}\|_{F^{s-1}_{p,q}}\label{eq:endpoint-I_6-remark}\\
			&\lesssim\|\nabla f\|_{L^\infty}\|\nabla v\|_{F^{s-1}_{p,q}}\label{eq:endpoint-I_6}.
		\end{align}
		When $s>1+\frac{d}{p}$, it is easy to deduce that
		\begin{equation}\label{eq:I_6sdayu}
			\Big\|\big\|2^{js}|I_6|\big\|_{l^q}\Big\|_{L^p}\lesssim\|f\|_{F^s_{p,q}}\|\nabla v\|_{F^{s-1}_{p,q}}.
		\end{equation}
		Using Bernstein's inequality, we can also derive an estimate of the following form. For brevity, we only treat the cases $(p,q)=(1,\infty)\times(1,\infty]$ or $p=q=\infty$, for the remaining cases with $p=1$ with $q\in[1,\infty]$ and $q=1$ with $p\in(1,\infty)$, we refer to \eqref{eq:endpoint-I_6}.
		\begin{align}
			\Big\|\big\|2^{js}|I_6|\big\|_{l^q}\Big\|_{L^p}
			&\lesssim \| S_{m-1}f\|_{L^\infty}\Big\|\big\| \sum_{|m-j|\leq 4}2^{(j-m)s}2^{m(s+1)}M(\Delta_m\tilde{v}^i)(x)\big\|_{l^q}\Big\|_{L^p}\nonumber\\
			&\lesssim \| S_{m-1}f\|_{L^\infty}\sum_{j\in\mathbb{Z}}2^{js}\chi_{\{|j|\leq 4\}}\Big\|\big\| 2^{m(s+1)}M(\Delta_m\tilde{v}^i)(x)\big\|_{l^q}\Big\|_{L^p}\nonumber\\
			&\lesssim\sum_{j\in\mathbb{Z}}2^{js}\chi_{\{|j|\leq 4\}}\| f\|_{L^\infty}\|\tilde{v}\|_{F^{s+1}_{p,q}}\nonumber\\
			&\lesssim\|f\|_{L^\infty}\|\nabla\tilde{v}\|_{F^{s}_{p,q}}\lesssim\|f\|_{L^\infty}\|\nabla v\|_{F^{s}_{p,q}}.\label{eq:I_6-blow-up}
		\end{align}
		
		\textbf{Bounds for $\Big\|\big\|2^{js}|I_7|\big\|_{l^q}\Big\|_{L^p}$:}
		
		As $I_7=\sum_{|j-m|\leq1}[S_1v,\Delta_j]\cdot\nabla\Delta_mf  $, by the first-order Taylor formula, we have
		\begin{align}
			|I_7|&=\big|\sum_{|j-m|\leq1}(S_1v\Delta_j\nabla\Delta_mf-\Delta_j(S_1v\nabla\Delta_mf))\big|\nonumber\\
			&=\Big|\sum_{|j-m|\leq1}\int_{\mathbb{R}^d}2^{jd}(S_1v(x)-S_1v(y))h(2^j(x-y))\nabla\Delta_mf(y)dy\Big|\nonumber\\
			&=\Big|\sum_{|j-m|\leq1}\int_{\mathbb{R}^d}2^{jd}(S_1\nabla v(x)-S_1\nabla v(y))h(2^j(x-y))\Delta_mf(y)dy\Big|\nonumber\\
			&\quad+\Big|\sum_{|j-m|\leq1}\int_{\mathbb{R}^d}2^{jd+j}(S_1v(x)-S_1v(y))\nabla h(2^j(x-y))\Delta_mf(y)dy\Big|\nonumber\\
			&\leq \|\nabla v\|_{L^\infty}\Big|\sum_{|j-m|\leq1}\int_{\mathbb{R}^d}2^{jd}h(2^j(x-y))\Delta_mf(y)dy\Big|\nonumber\\
			&\quad+\|\nabla v\|_{L^\infty}\Big|\sum_{|j-m|\leq1}\int_{\mathbb{R}^d}2^{jd+j}|x-y|\partial_ih(2^j(x-y))\Delta_mf(y)dy\Big|.\label{eq:equation61}
		\end{align}	 
		Similar to the case of $I_1$,  for $(p,q) \in (1,\infty)\times(1,\infty]$ or $p=q=\infty$, applying Lemma \ref{lem:Fguji} to \eqref{eq:equation61} and then using Young's inequality together with Lemma \ref{lem:miaoguji}, we obtain 
		\begin{equation}\label{eq:I_7}
			\Big\|\big\|2^{js}|I_7|\big\|_{l^q}\Big\|_{L^p}\lesssim \|\nabla v\|_{L^\infty}\|f\|_{F^s_{p,q}}.
		\end{equation}
		For $p=1$ with $q\in[1,\infty]$	or $q=1$ with $p\in(1,\infty)$, we apply Lemma \ref{lem:Fguji} to \eqref{eq:equation61}, followed by Young's inequality and Lemma \ref{lem:guoguji}, which yields
		\begin{equation}\label{eq:endpoint-I_7}
			\Big\|\big\|2^{js}|I_7|\big\|_{l^q}\Big\|_{L^p}\lesssim   \|\nabla v\|_{L^\infty}\|f\|_{F^s_{p,q}}.
		\end{equation}
		
		Hence, combining inequalities \eqref{eq:I_1}, \eqref{eq:endpoint-I_1}, \eqref{eq:I_2}, \eqref{eq:I_2'}, \eqref{eq:I_3}, \eqref{eq:I_3'}, \eqref{eq:I_4+I_5}, \eqref{eq:endpoint-I_4+I_5},  \eqref{eq:I_6-sxiaoyu} or \eqref{eq:I_6sdayu}, \eqref{eq:I_7}, \eqref{eq:endpoint-I_7} yields \eqref{eq:commutator-estimates1} and \eqref{eq:commutator-estimates2}. 
		
		To arrive at \eqref{eq:commutator-estimates4}, \eqref{eq:commutator-estimates5} and \eqref{eq:commutator-estimates3}, we reconsider the estimate of $I_2+I_3\triangleq -\Delta_jR(\tilde{v}^i,\partial_if)$. When $(p,q) \in (1,\infty)\times(1,\infty]$ or $p=q=\infty$, by applying integration by parts, the first-order Taylor formula, and invoking Lemma \ref{lem:miaoguji}, one infers
		\begin{align}
			|I_2+I_3|&=\left|\sum_{m\in\mathbb{Z}}\Delta_j\left(\Delta_m\tilde{v}^i\tilde{\Delta}_m\partial_if\right)\right|=\left|\sum_{m\geq j-3}\Delta_j\left(\Delta_m\tilde{v}^i\tilde{\Delta}_m\partial_if\right)\right|\nonumber\\
			&=\left|\sum_{m\geq j-3}2^{jd}\int_{\mathbb{R}^d}h\left(2^j(x-y)\right)\left(\Delta_m\tilde{v}^i(y)\tilde{\Delta}_m\partial_if(y)\right)dy\right|\nonumber\\
			&\lesssim\left|\sum_{m\geq j-3}2^{jd+j}\int_{\mathbb{R}^d}(\partial_ih)\left(2^j(x-y)\right)\Delta_m\tilde{v}^i(y)\tilde{\Delta}_mf(y)dy\right|\nonumber\\
			&\mathrel{\phantom{=}}+\left|\sum_{m\geq j-3}2^{jd}\int_{\mathbb{R}^d}h\left(2^j(x-y)\right)\partial_i\Delta_m\tilde{v}^i(y)\tilde{\Delta}_mf(y)dy\right|\label{eq:I_2+I_3fenbujifenhou}\\
			&\lesssim\sum_{m\geq j-3}2^jM(\tilde{\Delta}_mf)(x)\|\Delta_m v\|_{L^\infty}+\sum_{m\geq j-3}M(\tilde{\Delta}_mf)(x)\|\nabla\Delta_mv\|_{L^\infty}.\nonumber
		\end{align}
		Now applying Lemma \ref{lem:Bernstein}, Young's inequality and Lemma \ref{lem:Fguji}, for $s>0$, one can find
		\begin{align}
			\Big\|\big\|2^{js}|I_2+I_3|\big\|_{l^q}\Big\|_{L^p}
			&\lesssim\Bigg\|\bigg\|\sum_{m\geq j-3}2^{js+j}M(\tilde{\Delta}_mf)(x)\|\Delta_m v\|_{L^\infty}\bigg\|_{l^q}\Bigg\|_{L^p}\nonumber\\
			&\mathrel{\phantom{=}}+\Bigg\|\bigg\|\sum_{m\geq j-3}2^{js}M(\tilde{\Delta}_mf)(x)\|\nabla\Delta_mv\|_{L^\infty}\bigg\|_{l^q}\Bigg\|_{L^p}\nonumber\\
			&\lesssim\Bigg\|\bigg\|\sum_{m\geq j-3}2^{(j-m)(s+1)}M(2^{ms}\tilde{\Delta}_mf)(x)\|\nabla\Delta_mv\|_{L^\infty}\bigg\|_{l^q}\Bigg\|_{L^p}\nonumber\\
			&\mathrel{\phantom{=}}+\Bigg\|\bigg\|\sum_{m\geq j-3}2^{(j-m)s}M(2^{ms}\tilde{\Delta}_mf)(x)\|\nabla\Delta_mv\|_{L^\infty}\bigg\|_{l^q}\Bigg\|_{L^p}\nonumber\\
			&\lesssim\sum_{j\in\mathbb{Z}}2^{j(s+1)}\chi_{\{j\leq 3\}}\Big\|\big\|M(2^{ms}\tilde{\Delta}_mf)(x)\|\nabla\Delta_mv\|_{L^\infty}\big\|_{l^q}\Big\|_{L^p}\nonumber\\
			&\mathrel{\phantom{=}}+\sum_{j\in\mathbb{Z}}2^{js}\chi_{\{j\leq 3\}}\Big\|\big\|M(2^{ms}\tilde{\Delta}_mf)(x)\|\nabla\Delta_mv\|_{L^\infty}\big\|_{l^q}\Big\|_{L^p}\nonumber\\
			&\lesssim\sum_{j\in\mathbb{Z}}2^{js}\chi_{\{j\leq 3\}}\|\nabla v\|_{L^\infty}\Big\|\big\|M(2^{ms}\tilde{\Delta}_mf)(x)\big\|_{l^q}\Big\|_{L^p}\nonumber\\
			&\lesssim\sum_{j\in\mathbb{Z}}2^{js}\chi_{\{j\leq 3\}}\|\nabla v\|_{L^\infty}\Big\|\big\|2^{ms}|\tilde{\Delta}_mf|\big\|_{l^q}\Big\|_{L^p}\nonumber\\
			&\lesssim\sum_{j\in\mathbb{Z}}2^{js}\chi_{\{j\leq 3\}}\|\nabla v\|_{L^\infty}\|f\|_{F^s_{p,q}}\label{eq:I_2+I_3-remark}\\
			&\lesssim\|\nabla v\|_{L^\infty}\|f\|_{F^s_{p,q}}\label{eq:I_2+I_3}.
		\end{align}
		We note that if $\text{div}v=0 $, then the second term in \eqref{eq:I_2+I_3fenbujifenhou}, which results from integration by parts, is identically zero. It then follows from the above argument that for $s>-1$ and $\text{div }v=0$,
		\begin{align}
			\Big\|\big\|2^{js}|I_2+I_3|\big\|_{l^q}\Big\|_{L^p}&\lesssim\sum_{j\in\mathbb{Z}}2^{j(s+1)}\chi_{\{j\leq 3\}}\|\nabla v\|_{L^\infty}\|f\|_{F^s_{p,q}}\label{eq:I_2+I_3-v=f-remark}\\
			&\lesssim\|\nabla v\|_{L^\infty}\|f\|_{F^s_{p,q}}\label{eq:I_2+I_3-v=f}.
		\end{align}
		For $p=1$ with $q\in[1,\infty]$	or $q=1$ with $p\in(1,\infty)$, since $s>0$, for arbitrary $r\in (0,1)$, one can select $\theta \in (0,1)$ small enough, such that $s>d\frac{\theta}{r}$. Due to frequency interaction \eqref{a.o.c}, one observes that $\Delta_j(\Delta_m\tilde{v}\tilde{\Delta}_mf)\equiv0$, if $m <j-3$. Using  integration by parts and Lemma \ref{lem:guoguji}, we deduce
		\begin{align*}
			|I_2+I_3|
			&=\left|\Delta_j\sum_{m\in\mathbb{Z}}\Delta_m\tilde{v}^i\tilde{\Delta}_m\partial_if\right|=\left|\sum_{m\geq j-3}\Delta_j\left(\Delta_m\tilde{v}^i\tilde{\Delta}_m\partial_if\right)\right|\\
			&=\left|\sum_{m\geq j-3}\int2^{jd}h\left(2^j(x-y)\right)\Delta_m\tilde{v}^i(y)\tilde{\Delta}_m\partial_if(y)dy\right|\\
			&\lesssim\left|\sum_{m\geq j-3}\int2^{jd+j}(\partial_ih)\left(2^j(x-y)\right)\Delta_m\tilde{v}^i(y)\tilde{\Delta}_mf(y)dy\right|\\
			&\mathrel{\phantom{=}}+\left|\sum_{m\geq j-3}\int2^{jd}h\left(2^j(x-y)\right)\Delta_m\partial_i\tilde{v}^i(y)\tilde{\Delta}_mf(y)dy\right|\\
			&\lesssim\sum_{m\geq j-3}2^j2^{(m-j)\frac{d\theta}{r}}M(|\Delta_m\tilde{v}^i\tilde{\Delta}_mf|^{1-\theta})\left[M(|\Delta_m\tilde{v}^i\tilde{\Delta}_mf|^r)(x)\right]^{\frac{\theta}{r}}\\
			&\mathrel{\phantom{=}}+\sum_{m\geq j-3}2^{(m-j)\frac{d\theta}{r}}M(|\Delta_m\partial_i\tilde{v}^i\tilde{\Delta}_mf|^{1-\theta})\left[M(|\Delta_m\partial_i\tilde{v}^i\tilde{\Delta}_mf|^r)(x)\right]^{\frac{\theta}{r}}.
		\end{align*}    	
		Thanks to Young's inequality, H\"{o}lder's inequality and Lemma \ref{lem:Bernstein}, one can infer
		\begin{align}
			&\mathrel{\phantom{=}}\Big\|\big\|2^{js}|I_2+I_3|\big\|_{l^q}\Big\|_{L^p}\nonumber\\
			&\lesssim\Bigg\|\bigg\|\sum_{m\geq j-3}2^{(j-m)(s+1-\frac{d\theta}{r})}M(|(2^m\Delta_m\tilde{v}^i)(2^{ms}\tilde{\Delta}_mf)|^{1-\theta})\left[M(|(2^m\Delta_m\tilde{v}^i)(2^{ms}\tilde{\Delta}_mf)|^r)(x)\right]^{\frac{\theta}{r}}\bigg\|_{l^q}\Bigg\|_{L^p}\nonumber\\
			&\mathrel{\phantom{=}}+\Bigg\|\bigg\|\sum_{m\geq j-3}2^{(j-m)(s-\frac{d\theta}{r})}M(|(\Delta_m\partial_i\tilde{v}^i)(2^{ms}\tilde{\Delta}_mf)|^{1-\theta})\left[M(|(\Delta_m\partial_i\tilde{v}^i)(2^{ms}\tilde{\Delta}_mf)|^r)(x)\right]^{\frac{\theta}{r}}\bigg\|_{l^q}\Bigg\|_{L^p}\nonumber\\
			&\lesssim\sum_{j\in\mathbb{Z}}2^{j(s+1-\frac{d\theta}{r})}\chi_{\{j\leq 3\}}\Bigg\|\bigg\|M(|(2^m\Delta_m\tilde{v}^i)(2^{ms}\tilde{\Delta}_mf)|^{1-\theta})\left[M(|(2^m\Delta_m\tilde{v}^i)(2^{ms}\tilde{\Delta}_mf)|^r)(x)\right]^{\frac{\theta}{r}}\bigg\|_{l^q}\Bigg\|_{L^p}\nonumber\\
			&\mathrel{\phantom{=}}+\sum_{j\in\mathbb{Z}}2^{j(s-\frac{d\theta}{r})}\chi_{\{j\leq 3\}}\Bigg\|\bigg\|M(|(\Delta_m\partial_i\tilde{v}^i)(2^{ms}\tilde{\Delta}_mf)|^{1-\theta})\left[M(|(\Delta_m\partial_i\tilde{v}^i)(2^{ms}\tilde{\Delta}_mf)|^r)(x)\right]^{\frac{\theta}{r}}\bigg\|_{l^q}\Bigg\|_{L^p}\nonumber\\
			&\lesssim\sum_{j\in\mathbb{Z}}2^{j(s-\frac{d\theta}{r})}\chi_{\{j\leq 3\}}\|\nabla v\|_{L^\infty}\Big\|\big\|M(|2^{ms}\tilde{\Delta}_mf|^{1-\theta})(x)\big[M(2^{ms}\tilde{\Delta}_mf)^r(x)\big]^{\frac{\theta}{r}}\big\|_{l^q}\Big\|_{L^p}\nonumber\\
			&\lesssim \sum_{j\in\mathbb{Z}}2^{j(s-\frac{d\theta}{r})}\chi_{\{j\leq 3\}}\|\nabla v\|_{L^\infty}\|f\|_{F^s_{p,q}}\label{eq:endpoint-I_2+I_3+remark}\\
			&\lesssim \|\nabla v\|_{L^\infty}\|f\|_{F^s_{p,q}}\label{eq:endpoint-I_2+I_3}.
		\end{align}
		Analogous to \eqref{eq:I_2+I_3-v=f-remark} and \eqref{eq:I_2+I_3-v=f}, we find that if $\text{div}v=0$, then for any $s>-1$, we have
		\begin{align}
			\Big\|\big\|2^{js}|I_2+I_3|\big\|_{l^q}\Big\|_{L^p}&\lesssim\sum_{j\in\mathbb{Z}}2^{j(s+1-\frac{d\theta}{r})}\chi_{\{j\leq 3\}}\|\nabla v\|_{L^\infty}\|f\|_{F^s_{p,q}}\label{eq:endpoint-I_2+I_3-v=f-remark}\\
			&\lesssim\|\nabla v\|_{L^\infty}\|f\|_{F^s_{p,q}}\label{eq:endpoint-I_2+I_3-v=f}.
		\end{align}
		Thus, combining \eqref{eq:I_1}, \eqref{eq:endpoint-I_1},  \eqref{eq:I_4+I_5}, \eqref{eq:endpoint-I_4+I_5}, \eqref{eq:I_6}, \eqref{eq:endpoint-I_6}, \eqref{eq:I_6-blow-up}, \eqref{eq:I_7}, \eqref{eq:endpoint-I_7} \eqref{eq:I_2+I_3}, \eqref{eq:I_2+I_3-v=f}, \eqref{eq:endpoint-I_2+I_3}, \eqref{eq:endpoint-I_2+I_3-v=f} yields \eqref{eq:commutator-estimates4}, \eqref{eq:commutator-estimates5} and \eqref{eq:commutator-estimates3}, thereby completing the proof of Theprem \ref{prop:commutator-estimates} in the Triebel-Lizorkin spaces. For the case of Besov spaces, by a similar argument as in the Triebel-Lizorkin setting, with Lemma \ref{lem:Fguji} replaced by Lemma \ref{lem:max}, the corresponding results can be readily obtained. For the sake of simplicity, we omit the details here. Therefore, we complete the proof of Theorem \ref{prop:commutator-estimates}. 
	\end{proof}
	
	\begin{remark}\label{remark:jiaohaunziquyu0}
		By Lemmas \ref{lem:Nik1}-\ref{lem:Nik2}, together with \eqref{eq:I_1-Remark}, \eqref{eq:endpoint-I_1-remark}, \eqref{eq:I_4+I_5-remark}, \eqref{eq:endpoint-I_4+I_5-remark}, \eqref{eq:I_6-remark}, \eqref{eq:endpoint-I_6-remark}, \eqref{eq:I_2+I_3-remark}, \eqref{eq:I_2+I_3-v=f-remark}, \eqref{eq:endpoint-I_2+I_3+remark}, \eqref{eq:endpoint-I_2+I_3-v=f-remark}, 
		it follows directly that, under the hypotheses of Theorem \ref{prop:commutator-estimates}, we have 
		\begin{flalign*}
			&&\Big\|\big\|2^{js}([f,\Delta_j]\cdot\nabla g)\big\|_{l^q(j\geq n)}\Big\|_{L^p}\to 0,\qquad \text{as}\quad n\to\infty,&&\\
			\text{and}&&\Big\|2^{js}\big\|[f,\Delta_j]\cdot\nabla g\big\|_{L^p}\Big\|_{l^q(j\geq n)}\to 0,\qquad\text{as}\quad n\to\infty,&& 
		\end{flalign*}
		where $\|f_j\|_{l^q(j\geq n)}$ stand for $\big(\sum_{j\geq n}|f_j|^q\big)^{\frac{1}{q}}$ with usual modification if $q=\infty$.
	\end{remark}

	\section{A priori estimates for transport equations}
		In this section, based on Theorem \ref{prop:commutator-estimates}, we establish the a priori estimates Theorem \ref{Thm:a-priori-estimates} for the transport equations \eqref{Eq}. 
	
	\begin{proof}[Proof of Theorem \ref{Thm:a-priori-estimates}]
		We first prove the case of Triebel-Lizorkin spaces. For this, applying the frequency localization operator $\Delta_j$ to \eqref{Eq}, one has
		\begin{equation}\label{zuoyongDelta}
			\begin{cases}
				(\partial_t+v\cdot\nabla)\Delta_jf=\Delta_j g+[v,\Delta_j]\cdot\nabla f,\\
				\Delta_jf|_{t=0}=\Delta_jf_0.
			\end{cases}
		\end{equation}
		Let us introduce particle trajectory mapping $X(t,\alpha)$, by definition, the solution to the following ordinary differential equation:
		\begin{equation}\label{liziguidaoyinghshe}
			\begin{cases*}
				\partial_tX(t,\alpha)=v(t,X(t,\alpha)),\\
				X(0,\alpha)=\alpha.
			\end{cases*}
		\end{equation}	    	
		Then, it follows from \eqref{zuoyongDelta} that
		\begin{equation}
			\partial_t\left(\Delta_jf\left(t,X(t,\alpha)\right)\right)=\Delta_jg(t,X(t,\alpha))+[v,\Delta_j]\cdot\nabla f(t,X(t,\alpha)).
		\end{equation}
		If we denote the Jacobian determinant of $X(t,\alpha)$ by $J(t,\alpha)=\det(\nabla_\alpha X)(t,\alpha)$, then we have $\partial_tJ(t,\alpha)=J(t,\alpha)(\text{div}v)(t,X(t,\alpha))$. And thus,
		\begin{align*}
			&\mathrel{\phantom{=}}\partial_t\big(J(t,\alpha)\Delta_jf\left(t,X(t,\alpha)\right)\big)\\
			&=J(t,\alpha)\textup{div}v(t,X(t,\alpha))\Delta_jf\left(t,X(t,\alpha)\right)+J(t,\alpha)\Delta_jg(t,X(t,\alpha))\\
			&\mathrel{\phantom{=}}+J(t,\alpha)[v,\Delta_j]\cdot\nabla f(t,X(t,\alpha)),
		\end{align*}
		which together with $J(0,\alpha)=1$ implies that
		\begin{align}
			&\mathrel{\phantom{=}}\big|J(t,\alpha)\Delta_jf\left(t,X(t,\alpha)\right)\big|\nonumber\\
			&\leq |\Delta_jf_0(\alpha)|+\int_{0}^{t}\big|J(\tau,\alpha)\textup{div}v(\tau,X(\tau,\alpha))\Delta_jf\left(\tau,X(\tau,\alpha)\right)\big|d\tau\nonumber\\
			&\mathrel{\phantom{=}}+\int_{0}^{t}\big|J(\tau,\alpha)\Delta_jg(\tau,X(\tau,\alpha))\big|d\tau+\int_{0}^{t}\big|J(\tau,\alpha)[v,\Delta_j]\cdot\nabla f(\tau,X(\tau,\alpha))\big|d\tau\label{jifenhou}.
		\end{align}
		Multiplying $2^{js}$ and taking $l^q$ norm for $j\in\mathbb{Z}$ on both sides of \eqref{jifenhou}, we get by using the Minkowski inequality that
		\begin{align}
			&\mathrel{\phantom{=}}|J(t,\alpha)|\bigg(\sum_{j\in\mathbb{Z} }\left|2^{js}\Delta_jf(t,X(t,\alpha))\right|^{q}\bigg)^{\frac{1}{q}}\nonumber\\
			&\leq\bigg(\sum_{j\in\mathbb{Z} }\left|2^{js}\Delta_jf_0(\alpha)\right|^{q}\bigg)^{\frac{1}{q}}\nonumber+\int_{0}^{t}|J(\tau,\alpha)|(\textup{div}v)(\tau,X(\tau,\alpha))\bigg(\sum_{j\in\mathbb{Z} }\left|2^{js}\Delta_jf(\tau,X(\tau,\alpha))\right|^{q}\bigg)^{\frac{1}{q}}d\tau\nonumber\\
			&\mathrel{\phantom{=}}+\int_{0}^{t}|J(\tau,\alpha)|\bigg(\sum_{j\in\mathbb{Z} }\left|2^{js}\Delta_jg(\tau,X(\tau,\alpha))\right|^{q}\bigg)^{\frac{1}{q}}d\tau\nonumber\\
			&\mathrel{\phantom{=}}+\int_{0}^{t}|J(\tau,\alpha)|\bigg(\sum_{j\in\mathbb{Z} }\left|2^{js}\Big(\big([v,\Delta_j]\cdot\nabla f\big)(\tau,X(\tau,\alpha))\Big)\right|^{q}\bigg)^{\frac{1}{q}}d\tau\label{lqhou},
		\end{align}		
		with the  usual modification if $q=\infty$. Next, taking the $L^p$ norm with respect to $\alpha\in\mathbb{R}^d$ on both sides of \eqref{lqhou}, we get by using the Minkowski inequality that
		\begin{align}
			&\mathrel{\phantom{=}}\Bigg(\int_{\mathbb{R}^d}\Big|J(\tau,\alpha)\Big(\sum_{j\in\mathbb{Z} }\Big|2^{js}\Delta_jf(t,X(t,\alpha))\Big|^{q}\Big)^{\frac{1}{q}}\Big|^pd\alpha\Bigg)^\frac{1}{p}\nonumber\\
			&\leq\|f_0\|_{F^s_{p,q}}+\int_{0}^{t}\Bigg(\int_{\mathbb{R}^d}\Big|J(\tau,\alpha)(\textup{div}v)(\tau,X(\tau,\alpha))\Big(\sum_{j\in\mathbb{Z} }\left|2^{js}\Delta_jf(\tau,X(\tau,\alpha))\right|^{q}\Big)^{\frac{1}{q}}\Big|^pd\alpha\Bigg)^\frac{1}{p}d\tau\nonumber\\
			&\quad+\int_{0}^{t}\Bigg(\int_{\mathbb{R}^d}\Big|J(\tau,\alpha)\Big(\sum_{j\in\mathbb{Z} }\big|2^{js}\Delta_jg(\tau,X(\tau,\alpha))\big|^{q}\Big)^{\frac{1}{q}}\Big|^pd\alpha\Bigg)^\frac{1}{p}d\tau\nonumber\\
			&\quad+\int_{0}^{t}\Bigg(\int_{\mathbb{R}^d}\Big|J(\tau,\alpha)\Big(\sum_{j\in\mathbb{Z} }\big|2^{js}\big(([v,\Delta_j]\cdot\nabla f)(\tau,X(\tau,\alpha))\big)\big|^{q}\Big)^{\frac{1}{q}}\Big|^pd\alpha\Bigg)^\frac{1}{p}d\tau,\label{Lqhou}
		\end{align}	
		with the  usual modification if $p=\infty$. 
		Thus, it follows from \eqref{Lqhou} and the change of variables formula that
		\begin{align}
			\|f\|_{F^s_{p,q}}&\leq\|f_0\|_{F^s_{p,q}}+\int_{0}^{t}\|\textup{div}v\|_{L^\infty}||f||_{F^s_{p,q}}d\tau+\int_{0}^{t}\|g\|_{F^s_{p,q}}d\tau\nonumber\\
			&\quad+\int_{0}^{t}\Big\|\big\|2^{js}[v,\Delta_j]\cdot\nabla f\big\|_{l^q}\Big\|_{L^p}d\tau\label{eq:cedububian}.
		\end{align}
		Then, applying Theorem \ref{prop:commutator-estimates} and Gronwall's inequality completes the proof.
		
		Similarly, the corresponding result in Besov spaces is readily obtained via integration by parts and Theorem \ref{prop:commutator-estimates}. For the sake of simplicity, we omit the details here. Therefore, we complete the proof of Theorem \ref{Thm:a-priori-estimates}.
	\end{proof} 
	
	\section{Local well-posedness for transport equations }
		We now address the local well-posedness result Theorem \ref{thm:solve-transport} for the transport equations \eqref{Eq} with data in  $X^s_{p,q}$. 

	\begin{proof}[Proof of Theorem \ref{thm:solve-transport}]
		For the sake of conciseness, we treat only the case $s<1+\frac{d}{p}$.
		
		We first smooth out the data and the velocity filed $v$ by setting 
		\begin{align*}
			f^n_0\triangleq S_nf_0,\qquad g^n=\rho _n\ast_tS_ng\quad \text{and} \quad v^n=\rho _n\ast_tS_nv,
		\end{align*}
		where $\rho_n\triangleq\rho_n(t)$ stands for a sequence of mollifiers with respect to the time variable. We clearly have $f^n_0\in X^\infty_{p,q},g^n\in C([0,T];X^\infty_{p,q}),v^n\in C([0,T]\times\mathbb{R}^d)$ and $\nabla v^n\in C([0,T];X^\infty_{p,q})$ with $X^\infty_{p,q}\triangleq\cap_{s\in\mathbb{R}}X^s_{p,q}$. Moreover,  $f^n_0$ is uniformly bounded in $X^s_{p,q}$, $g^n$ is uniformly bounded in $L^1(0,T;X^\infty_{p,q})$, $v^n$ is uniformly bounded in  $L^\rho(0,T;X^{-M}_{\infty,\infty})$ and $\nabla v^n$ is uniformly  bounded in  $L^1(0,T;X^{\frac{d}{p}}_{p,\infty}\cap L^\infty)$.
		
		Let $f^n$ be the solution to the following equation:
		\begin{equation}\label{nEq}
			\begin{cases*}
				\partial_tf^n+v^n\cdot\nabla f^n=g^n,\\
				f^n|_{t=0}=f^n_0.
			\end{cases*}		
		\end{equation}
		Clearly, $f^n$ is smooth, and according to Theorem \ref{Thm:a-priori-estimates}, one has 
		\begin{equation}
			\|f^n(t)\|_{X^s_{p,q}}\leq e^{C\int_{0}^{t}Z^n(\tau)d\tau}\bigg(\|f^n_0\|_{X^s_{p,q}}+\int_{0}^{t}\|g^n(\tau)\|_{X^s_{p,q}}e^{-C\int^\tau_0Z^n(\tau')d\tau'}d\tau\bigg)
		\end{equation}
		with $Z^n(t)\triangleq\|\nabla v^n(t)\|_{X^{\frac{d}{p}}_{p,\infty}\cap L^\infty}$.
		
		Thus, in view of the uniform bounds for $f^n_0,g^n$ and $v^n$, we conclude that the sequence $\{f^n\}_{n\in\mathbb{N}}$ is uniformly bounded in $C([0,T];X^s_{p,q})$. In order to prove the convergence of a subsequence, we appeal to compactness arguments. First, notice that
		\begin{equation}\label{equation12}
			\partial_tf^n-g^n=-v^n\cdot \nabla f^n.
		\end{equation}
		Since $\nabla f^n$ is uniformly bounded in $L^\infty(0,T;X^{s-1}_{p,q})$, and for small enough $\varepsilon$, $v^n$ is uniformly bounded in $L^\alpha(0,T;X^{\frac{d}{p}+1-\varepsilon}_{p+\varepsilon,\infty})$ for some $\alpha>1$  (interpolate between the uniform bounds in $L^1(0,T;X^{\frac{d}{p}+1}_{p,\infty})$ and in $L^\rho(0,T;X^{-M}_{\infty,\infty} )$ for $v^n$), one can conclude by appealing  Lemmas \ref{lem:Triebel-Lizorkin-properties} $(i)$ and \ref{lem:product} that the right hand-side of \eqref{equation12} is uniformly bounded in $L^\alpha(0,T;X^{-m}_{p,\infty})$ for some large enough $m>0$.  Integrating in time and denoting $\bar{f}^n(t)\triangleq f^n(t)-\int_{0}^tg^n(\tau)d\tau$,  we thus gather that there exists some $\beta>0$ such that the sequence  $\{\bar{f}^n\}_{n\in\mathbb{N}}$ is uniformly bounded in $C^\beta([0,T];X^{-m}_{p,\infty})$, hence uniformly equicontinuous with values in $X^{-m}_{p,\infty}$.
		
		Next, assuming that $m$ is large enough, observe that the map $f\mapsto\phi f$ is compact from $X^s_{p,q}\to X^{-m}_{p,\infty}$ for all $\phi\in C^\infty_c$ (by Lemma \ref{lem:Triebel-Lizorkin-properties} (ii)). Combining the Arzel\`{a}-Ascoli theorem and the Cantor diagonal process thus ensures that, up to a subsequence, the sequence $\{\bar{f}^n\}_{n\in\mathbb{N}}$ converges in $S'$ to some distribution $\bar{f}$ such that $\phi\bar{f}\in C([0,T];X^{-m}_{p,\infty})$ for all $\phi\in C^\infty_c$. 
		
		Finally, appealing once again to the uniform bounds in $L^\infty(0,T;X^s_{p,q})$ and the Fatou property (Lemma \ref{lem:Triebel-Lizorkin-properties} ($iv$)) for Triebel-Lizorkin or Besov spaces, we get $\bar{f}\in L^\infty(0,T;X^{s}_{p,q})$. By an interpolation argument, together with the bounds in $L^\infty(0,T;X^{s}_{p,q})$ for $\{\bar{f}^n\}_{n\in\mathbb{N}}$, we find that $\phi\bar{f}^n\mapsto \phi\bar{f}$ in $C([0,T];X^{s-\varepsilon}_{p,\infty})$ for all $\varepsilon>0$ and $\phi\in C^\infty_c$ so that we may pass to the limit in the equation for $f^n$, in the sense of distributions. Besides, the sequences $\{f_0^n\}_{n\in\mathbb{N}}$, $\{g^n\}_{n\in\mathbb{N}}$ and  $\{v^n\}_{n\in\mathbb{N}}$ converge respectively to $f_0$, $g$, and $v$, which may be easily deduced from their definitions. We conclude that the function $f\triangleq\bar{f}+\int_{0}^tg(\tau)d\tau$ is a solution to \eqref{Eq}.
		
		We still have to prove that $f\in C([0,T];X^s_{p,q})$ in the case where $q< \infty$. We only prove the case of Triebel-Lizorkin spaces here, the Besov case is analogous and follows readily. Just by looking at the equation \eqref{Eq}, it is easy to get $\partial_tf\in L^1(0,T;F^{-M'}_{p,\infty})$ for some large enough $M'$. Hence $f\in C([0,T];F^{-M'}_{p,\infty})$, whence $S_nf\in C([0,T];F^s_{p,q})$ for all $n\in\mathbb{N}$. Note that 
		\begin{equation*}
			\Delta_j(f-S_nf)=\sum_{\substack{|j-j'|\leq 1\\j'\geq n}}\Delta_j\Delta_{j'}f.
		\end{equation*}
		For $1<p,q<\infty$, using Lemma \ref{lem:miaoguji}, Young's inequality and Lemma \ref{lem:Fguji}, we have 
		\begin{align}
			\|f-S_nf\|_{F^s_{p,q}}&=\Big\|\big\|\sum_{\substack{|j-j'|\leq 1\\j'\geq n}}2^{js}\Delta_j\Delta_{j'}f\big\|_{l^q}\Big\|_{L^p}\nonumber\\
			&\leq C\Big\|\big\|\sum_{\substack{|j-j'|\leq 1\\j'\geq n}}2^{(j-j')s}2^{j's}M(\Delta_{j'}f)\big\|_{l^q}\Big\|_{L^p}\nonumber\\
			&\leq  C\Big\|\big\|2^{j's}\Delta_{j'}f\big\|_{l^q(j'\geq n)}\Big\|_{L^p}.\label{equation19}		
		\end{align}
		The above \eqref{equation19} also holds true for $p = 1$ or $q=1$, since Lemma \ref{lem:guoguji}, Young's inequality and Lemma \ref{lem:Fguji}. Similar to the proof of \eqref{eq:cedububian}, for any $n\in\mathbb{N}$, consider  $j'\geq n$ instead of $j'\in\mathbb{Z}$, we can show that
		\begin{align*}
			\Big\|\big\|2^{j's}\Delta_{j'}f\big\|_{l^q(j'\geq n)}\Big\|_{L^p}&\leq \Big\|\big\|2^{j's}\Delta_{j'}f_0\big\|_{l^q(j'\geq n)}\Big\|_{L^p}+\int_{0}^t\|\text{div} v\|_{L^\infty}\Big\|\big\|2^{j's}\Delta_{j'}f\big\|_{l^q(j'\geq n)}\Big\|_{L^p}d\tau\nonumber\\
			&\quad+\int_{0}^t\Big\|\big\|2^{j's}\Delta_{j'}g\big\|_{l^q(j'\geq n)}\Big\|_{L^p}+\int_{0}^{t}\Big\|\big\|2^{j's}[v,\Delta_j]\cdot\nabla f\big\|_{l^q(j'\geq n)}\Big\|_{L^p}d\tau,
		\end{align*}
		which along with \eqref{equation19} and Gronwall's inequality yields
		\begin{align}
			\|f-S_nf\|_{L^\infty_T(F^s_{p,q})}&\leq Ce^{C\int_{0}^{T}\|\nabla v\|_{L^\infty}d\tau}\Big(\Big\|\big\|2^{j's}\Delta_{j'}f_0\big\|_{l^q(j'\geq n)}\Big\|_{L^p}+\int^T_0\Big\|\big\|2^{j's}\Delta_{j'}g\big\|_{l^q(j'\geq n)}\Big\|_{L^p}d\tau\nonumber\\
			&\quad+\int_{0}^T\Big\|\big\|2^{j's}[v,\Delta_j]\cdot\nabla f\big\|_{l^q(j'\geq n)}\Big\|_{L^p}d\tau\Big)\label{equation3}.
		\end{align}
		The first term of right hand-side in \eqref{equation3} clearly tends to zero when $n$ goes to infinity. Thanks to Remark \ref{remark:jiaohaunziquyu0}, the commutator term in the third term of right hand-side of \eqref{equation3} tends to zero when $n$ goes to infinity. So the terms in the integrals approach to zero for almost every $t$. Hence, by virtue of Lebesgue's dominated convergence theorem, $\|f-S_nf\|_{L^\infty_T(F^s_{p,q})}$ tends to zero when $n$ goes to infinity. This achieves to proving that $f\in C([0,T];F^s_{p,q})$ in the case $q<\infty$.
		
		When $q=\infty$, we use that for any $s'<s$, we have the embedding $F^s_{p,\infty}\hookrightarrow F^{s'}_{p,1}$ so that the above argument may be repeated in the space $F^{s'}_{p,1}$, this yields  $f\in C([0,T];F^{s'}_{p,1})$.
		
		For the  uniqueness and continuity with respect to the initial data, if we are given  $(f^1,f^2)\in L^\infty(0,T;X^s_{p,q}\times X^s_{p,q})\cap C([0,T];\mathscr{S}'\times\mathscr{S}')$ two solutions to \eqref{Eq} with initial data $f^1_0,f^2_0\in X^s_{p,q}$. Denote $w=f^1-f^2$, then $w\in C([0,T];X^{s-1}_{p,q})$ solves the following  transport equation:
		\begin{equation}
			\begin{cases}
				\partial_tw+v\cdot\nabla w=0,\\
				w|_{t=0}=f^1_0-f^2_0.
			\end{cases}			
		\end{equation} 
		In view of Theorem \ref{Thm:a-priori-estimates}, we have for every $t\in[0,T],$
		\begin{equation}
			||f^1-f^2||_{X^{s}_{p,q}}\leq e^{C\int_{0}^tZ(\tau)d\tau}\|f^1_0-f^2_0\|_{X^{s}_{p,q}},
		\end{equation}
		which implies the uniqueness of the solution and its continuous dependence.
		Therefore, we complete the proof of Theorem \ref{thm:solve-transport}. 
	\end{proof}

	\section{Solving the two-component Euler-poincar\'{e} system}
	In this section, we will establish the local well-posedness for system \eqref{eq} or \eqref{transport-system} in the general Triebel-Lizorkin spaces by using the Friedrichs regularization method and transport equations theory developed in this paper.

	\begin{proof}[Proof of Theorem \ref{thm:well-poseness}]
		\textbf{Existence:} We prove it in the following four steps.\\
		\textbf{Step 1. Approximate solutions.}
		
		Staring form $(u^0,\gamma^0)\triangleq (0,0)$, we then define by induction a sequence of smooth functions $(u^n,\gamma^n)_{n\in\mathbb{N}}$ by solving the following transport equations:
		\begin{equation}\label{approximate-solution-system}
			\begin{cases}
				\partial_tu^{n+1}+u^n\cdot\nabla  u^{n+1}=F_1(u^n,\gamma^n)\triangleq F^n_1(t,x),\\
				\partial_t\gamma^{n+1}+u^n\cdot\nabla  \gamma^{n+1}=F_2(u^n,\gamma^n)\triangleq F^n_2(t,x),\\
				u^{n+1}(0,x)=u^{n+1}_0(x)\triangleq S_{n+1}u_0(x),\\
				\gamma^{n+1}(0,x)=\gamma^{n+1}_0(x)\triangleq S_{n+1}\gamma_0(x).
			\end{cases}
		\end{equation}
		where $F_1(u^n,\gamma^n)$ and $F_2(u^n,\gamma^n)$ are defined by \eqref{eq:F1} and \eqref{eq:F2}. Since all the data belong to $F^\infty_{p,q}$, Theorem \ref{thm:solve-transport} enables us to show by induction
		that for all $n\in\mathbb{N}$, the above system has a global solution which belongs to $C(\mathbb{R}^+;F^\infty_{p,q}\times F^\infty_{p,q})$.\\
		\textbf{Step 2. Uniform bounds.}
		
		According to Theorem \ref{Thm:a-priori-estimates}, we have the following inequalities for all $n\in\mathbb{N}$:
		\begin{equation}\label{eq:equation8}
			\|u^{n+1}(t)\|_{F^{s}_{p,q}}\leq Ce^{C\int_{0}^t\|u^n(\tau)\|_{F^s_{p,q}}d\tau}\Big(\|u^{n+1}_0\|_{F^{s}_{p,q}}+\int_{0}^te^{-C\int_{0}^\tau\|u^n(\tau')\|_{F^s_{p,q}}d\tau'}\|F^n_1(\tau)\|_{F^{s}_{p,q}}d\tau\Big),
		\end{equation}
		\begin{equation}\label{eq:equation9}
			\|\gamma^{n+1}(t)\|_{F^{s-1}_{p,q}}\leq Ce^{C\int_{0}^t\|u^n(\tau)\|_{F^{s}_{p,q}}d\tau}\Big(\|\gamma^{n+1}_0\|_{F^{s-1}_{p,q}}+\int_{0}^te^{-C\int_{0}^\tau\|u^n(\tau')\|_{F^{s}_{p,q}}d\tau'}\|F^n_2(\tau)\|_{F^{s-1}_{p,q}}d\tau\Big).
		\end{equation}
		Next, we estimate the $\|F_1^n(t)\|_{F^s_{p,q}}$ and $\|F_2^n(t)\|_{F^{s-1}_{p,q}} $. By definition, we have
		\begin{align*}
		    F_1^n(t,x)&=-(I-\Delta)^{-1}\text{div}\Big(\nabla u^n\nabla u^n+\nabla u^n(\nabla u^n)^T-(\nabla u^n)^T\nabla u^n-\nabla u^n(\text{div}u^n)\\
		    &\quad+\frac{1}{2}|\nabla u^n|^2I+\frac{1}{2}(\gamma^n)^2I+\gamma^nI\Big)-(I-\Delta)^{-1}\Big(u^n(\text{div}u^n)+(\nabla u^n)^T\cdot u^n\Big)\\
			&\triangleq R_1+R_2,
		\end{align*}
		and
		\begin{equation*}
			F_2^n(t,x)=-\gamma^n\textup{div }u^n-\text{div}u^n.
		\end{equation*}
		Using Lemma \ref{lem:Triebel-Lizorkin-properties} $(vi)$ and the fact that when $s>1+\frac{d}{p}$ or $s=1+d$ with $p=1$, $F^{s-1}_{p,q}\hookrightarrow L^\infty$, the $F^{s-1}_{p,q}$ is an algebra, we obtain
		\begin{align}
			\|R_1\|_{F^{s}_{p,q}}
			&\leq C\big(\|\nabla u^n\nabla u^n\|_{F^{s-1}_{p,q}}+\|\nabla u^n(\nabla u^n)^T\|_{F^{s-1}_{p,q}}+\|(\nabla u^n)^T\nabla u^n\|_{F^{s-1}_{p,q}}\nonumber\\
			&\quad+\|\nabla u^n(\text{div}u^n)\|_{F^{s-1}_{p,q}}+\frac{1}{2}\|\nabla u^n\|^2_{F^{s-1}_{p,q}}+\frac{1}{2}\| \gamma^n\|^2_{F^{s-1}_{p,q}}+\| \gamma^n\|_{F^{s-1}_{p,q}}\big)\nonumber\\
			&\leq C\big(\|u^n\|^2_{F^{s}_{p,q}}+\|\gamma^n\|^2_{F^{s-1}_{p,q}}+\|\gamma^n\|_{F^{s-1}_{p,q}}\big).\label{eq:equation10}
		\end{align}
		From Lemma \ref{lem:product}, we deduce that for $s>\max(1+\frac{d}{p},\frac{3}{2})$,
		\begin{align}
			\|R_2\|_{F^{s}_{p,q}}&\leq C\big(\|u^n(\text{div}u^n)\|_{F^{s-2}_{p,q}}+\|(\nabla u^n)^T\cdot u^n\|_{F^{s-2}_{p,q}}\big)\nonumber\\
			&\leq C\big(\|u^n\|_{F^{s-1}_{p,q}}\|\text{div}u^n\|_{F^{s-2}_{p,q}}+\|\nabla u^n\|_{F^{s-2}_{p,q}}\|u^n\|_{F^{s-1}_{p,q}}\big)\nonumber\\
			&\leq C\|u^n\|^2_{F^{s}_{p,q}}.\label{eq:equation52}
 		\end{align}
 		For the critical case $s=1+d$ with $p=1$, an application of Remark \ref{remark:product-critical} yields, for $d\geq 2$,
 		\begin{align}
 			\|R_2\|_{F^{1+d}_{1,q}}&\leq C\big(\|u^n(\text{div}u^n)\|_{F^{d-1}_{1,q}}+\|(\nabla u^n)^T\cdot u^n\|_{F^{d-1}_{1,q}}\big)\nonumber\\
 			&\leq C\big(\|u^n\|_{F^{d}_{1,q}}\|\text{div}u^n\|_{F^{d-1}_{1,q}}+\|\nabla u^n\|_{F^{d-1}_{1,q}}\|u^n\|_{F^{d}_{1,q}}\big)\nonumber\\
 			&\leq C\|u^n\|^2_{F^{1+d}_{1,q}}.\label{eq:equation55}
 		\end{align}
 		Regarding $\|F^n_2\|_{F^{s-1}_{p,q}}$, it is straightforward to obtain
		\begin{align}
			\|F^n_2\|_{F^{s-1}_{p,q}}&\leq C\big(\|\gamma^n\|_{F^{s-1}_{p,q}}\|\text{div}u^n\|_{F^{s-1}_{p,q}}+\|\text{div}u^n\|_{F^{s-1}_{p,q}}\big)\nonumber\\
			&\leq C\big(\|\gamma^n\|_{F^{s-1}_{p,q}}\|u^n\|_{F^{s}_{p,q}}+\|u^n\|_{F^{s}_{p,q}}\big),\label{eq:equation53}
		\end{align}
		under the condition that either $s>1+\frac{d}{p}$, or $s=1+d$ with $p=1$.
		By combining \eqref{eq:equation10}-\eqref{eq:equation53}, for $s>\max(1+\frac{d}{p},\frac{3}{2})$ or $s=1+d$ with $p=1$ and $d\geq 2$, we have
		\begin{equation}\label{eq:equation62}
			\|F^n_1\|_{F^s_{p,q}}+\|F^n_2\|_{F^{s-1}_{p,q}}\leq C(\|u^n\|_{F^s_{p,q}}+\|\gamma^n\|_{F^{s-1}_{p,q}}+1)(\|u^n\|_{F^s_{p,q}}+\|\gamma^n\|_{F^{s-1}_{p,q}}).
		\end{equation}
		Set
		\[ \Gamma^n\triangleq \|u^{n}\|_{F^s_{p,q}}+\|\gamma^{n}\|_{F^{s-1}_{p,q}}.\]
		Then, in view of \eqref{eq:equation8}, \eqref{eq:equation9}, and \eqref{eq:equation62}, we get
		\begin{equation}\label{eq:equation54}
			\Gamma^{n+1}(t)\leq Ce^{CU^n(t)} \Big(\|u_0\|_{F^s_{p,q}}+\|\gamma_0\|_{F^{s-1}_{p,q}}+\int_{0}^te^{-CU^n(\tau)}(\Gamma^n(\tau)+1)(\Gamma^n(\tau))d\tau\Big),
		\end{equation}
		where $U^n(t)\triangleq \int_{0}^t\| u^n(\tau)\|_{F^s_{p,q}}d\tau$. Let us fix $T>0$ such that $2C^2(\|u_0\|_{F^s_{p,q}}+\|\gamma_0\|_{F^{s-1}_{p,q}})T<1$, and suppose that
		\begin{equation}\label{eq:Gamma-n-estimate}
			\Gamma^n\leq \frac{C(\|u_0\|_{F^s_{p,q}}+\|\gamma_0\|_{F^{s-1}_{p,q}})}{1-2C^2(\|u_0\|_{F^s_{p,q}}+\|\gamma_0\|_{F^{s-1}_{p,q}})t},\qquad\forall t\in[0,T],\ \forall n\in\mathbb{N}.
		\end{equation} 
		Plugging \eqref{eq:Gamma-n-estimate} in \eqref{eq:equation54} eventually yields
		\begin{align}
			\Gamma^{n+1}&\leq\frac{C(\|u_0\|_{F^s_{p,q}}+\|\gamma_0\|_{F^{s-1}_{p,q}})}{\sqrt{1-2C^2(\|u_0\|_{F^s_{p,q}}+\|\gamma_0\|_{F^{s-1}_{p,q}})t}}+\frac{C}{\sqrt{1-2C^2(\|u_0\|_{F^s_{p,q}}+\|\gamma_0\|_{F^{s-1}_{p,q}})t}}\nonumber\\
			&\quad\times\int_{0}^{t}\frac{C(\|u_0\|_{F^s_{p,q}}+\|\gamma_0\|_{F^{s-1}_{p,q}})^2}{\big(1-2C^2(\|u_0\|_{F^s_{p,q}}+\|\gamma_0\|_{F^{s-1}_{p,q}})\tau\big)^\frac{3}{2}}d\tau+\frac{C}{\sqrt{1-2C^2(\|u_0\|_{F^s_{p,q}}+\|\gamma_0\|_{F^{s-1}_{p,q}})t}}\nonumber\\
			&\quad\times\int_{0}^{t}\frac{C(\|u_0\|_{F^s_{p,q}}+\|\gamma_0\|_{F^{s-1}_{p,q}})}{\big(1-2C^2(\|u_0\|_{F^s_{p,q}}+\|\gamma_0\|_{F^{s-1}_{p,q}})\tau\big)^\frac{1}{2}}d\tau\nonumber\\
			&\leq \frac{C(\|u_0\|_{F^s_{p,q}}+\|\gamma_0\|_{F^{s-1}_{p,q}})}{1-2C^2(\|u_0\|_{F^s_{p,q}}+\|\gamma_0\|_{F^{s-1}_{p,q}})t}.
		\end{align}		
		Therefore, we obtain that the sequence $(u^n,\gamma^n)_{n\in\mathbb{N}}$ is uniformly bounded in the space $C([0,T];F^s_{p,q}\times F^{s-1}_{p,q})$. This clearly entails that $(u^n\cdot\nabla u^{n},u^n\cdot\nabla \gamma^{n})$ is uniformly bounded in $C([0,T];F^{s-1}_{p,q}\times F^{s-2}_{p,q})$, and right-hand side of the first and second equations of \eqref{approximate-solution-system} have been shown to be uniformly bounded in $C([0,T];F^s_{p,q}\times F^{s-1}_{p,q})$. We can conclude that $(u^n,\gamma^n)_{n\in\mathbb{N}}$ is uniformly bounded in $E^s_{p,q}(T)\times E^{s-1}_{p,q}(T)$.\\	
		\textbf{Step 3. Convergence.}
		
		We are going to show that the sequence $(u^n,\gamma^n)_{n\in\mathbb{N}}$ is a Cauchy sequence in $C([0,T];F^{s-1}_{p,q}\times F^{s-2}_{p,q})$. For that purpose, we note that for all $(m,n)\in\mathbb{N}^2 $, we have
		\begin{equation}
			(\partial_t+u^{n+m}\cdot\nabla)(u^{n+m+1}-u^{n+1}) 
			= F_1^{n+m}(t,x)-F^n_1(t,x)+(u^{n}-u^{n+m})\cdot\nabla u^{n+1},
		\end{equation}
		and
		\begin{equation}
			(\partial_t+u^{n+m}\cdot\nabla)(\gamma^{n+m+1}-\gamma^{n+1}) 
			= F_2^{n+m}(t,x)-F^n_2(t,x)+(u^{n}-u^{n+m})\cdot\nabla \gamma^{n+1},
		\end{equation}
		where $F^{n+m}_i(t,x)\triangleq F_i(u^{n+m},\gamma^{n+m}), i=1,2$. Similar to the proof of \eqref{eq:equation54}, for $s>\max(1+\frac{d}{p},\frac{3}{2})$ or $s=1+d$ with $p=1$ and $d\geq 2$, one infers
		\begin{equation*}
			\begin{aligned}
				\varLambda^{n+m+1}_{n+1}(t)
				&\leq Ce^{CU^{n+m}(t)}\Big(\|(u_0^{n+m+1}-u_0^{n+1})(t)\|_{F^{s-1}_{p,q}}+\|(\gamma_0^{n+m+1}-\gamma_0^{n+1})(t)\|_{F^{s-2}_{p,q}}\\
				&\quad+\int_{0}^te^{-CU^{n+m}(\tau)}\varLambda^{n+m}_n(\tau)\big(\Gamma^n(\tau)+\Gamma^{n+1}(\tau)+\Gamma^{n+m}(\tau)+1\big)d\tau\Big),
			\end{aligned}
		\end{equation*}
		where $\varLambda^{n+m}_{n}(t)\triangleq\|(u^{n+m}-u^{n})(t)\|_{F^{s-1}_{p,q}}+\|(\gamma^{n+m}-\gamma^{n})(t)\|_{F^{s-2}_{p,q}}$.
		Recalling the definition of $(u^n_0,\gamma^n_0)$, we obtain
		\[ u^{n+m+1}_0-u^{n+1}=\sum_{j=n+1}^{n+m}\Delta_ju_0,\qquad \gamma^{n+m+1}_0-\gamma^{n+1}=\sum_{j=n+1}^{n+m}\Delta_j\gamma_0.\]
		Thanks to \eqref{a.o.c}, Young's inequality and Lemmas  \ref{lem:Fguji}-\ref{lem:miaoguji}, we have for $1\leq p<\infty$ and $1\leq q\leq \infty$,
		\begin{align*}
			\|\Delta_{n}u_0\|_{F^{s-1}_{p,q}}&=\Big\|\big\|2^{j(s-1)}\Delta_j\Delta_{n}u_0\big\|_{l^q}\Big\|_{L^p}=\Big\|\big(\sum_{|j-n|\leq 1}(2^{jq(s-1)}|\Delta_j\Delta_{n}u_0|)^q\big)^{\frac{1}{q}}\Big\|_{L^p}\\
			&\lesssim \Big\|\big(\sum_{|j-n|\leq 1}(2^{sq(j-n)}2^{-jq}M(2^{nsq}\Delta_{n}u_0))^q\big)^{\frac{1}{q}}\Big\|_{L^p}\\
			&\lesssim 2^{-(n-1)}\Big\|\big\|2^{nsq}\Delta_nu_0\big\|_{l^q}\Big\|_{L^p}=2^{-(n-1)}\|u_0\|_{F^s_{p,q}}.
		\end{align*}
		The end point cases where $p=1$ or $q=1$ can be handled by replacing
		Lemma \ref{lem:miaoguji} with Lemma \ref{lem:guoguji} in the above argument. Meanwhile, one readily finds that $\|\Delta_{n}u_0\|_{F^{s-1}_{\infty,\infty}}\lesssim2^{-(n-1)}\|u_0\|_{F^s_{\infty,\infty}}$. Thus, we get
		\begin{equation*}
			\|u^{n+m+1}_0-u^{n+1}_0\|_{F^{s-1}_{p,q}}\lesssim2^{-n}\left\|u_0\right\|_{F^{s}_{p,q}},\quad\|\gamma^{n+m+1}_0-\gamma^{n+1}_0\|_{F^{s-2}_{p,q}}\lesssim2^{-n}\left\|\gamma_0\right\|_{F^{s-1}_{p,q}}.
		\end{equation*} 
		Since $(u^n,\gamma^n)_{n\in\mathbb{N}}$ is uniformly bounded in $E^s_{p,q}(T)\times E^{s-1}_{p,q}(T)$, we get a constant $C_T$ independent of $n,m$ and such that for all $t\in[0,T]$, we have
		\begin{equation*}
			\varLambda^{n+m+1}_{n+1}(t)\leq C_T\big(2^{-n}+\int_{0}^{t}\varLambda^{n+m}_{n}(\tau)d\tau\big).
		\end{equation*}
		Arguing by induction, one can find 
		\begin{equation*}
			\varLambda^{n+m+1}_{n+1}(t)\leq\frac{(TC_T)^{n+1}}{(n+1)!}+C_T\sum_{k=0}^{n}2^{-(n-k)}\frac{(TC_k)^k}{k!}.
		\end{equation*}
		This implies that $(u^n,\gamma^n)_{n\in\mathbb{N}}$ is a Cauchy sequence in $C([0,T];F^{s-1}_{p,q}\times F^{s-2}_{p,q})$, whence it converges to some limit function $(u,\gamma)\in C([0.T];F^{s-1}_{p,q}\times F^{s-2}_{p,q})$.
		
		\textbf{Step 4. Conclusion.}
		
		Finally, we prove that $(u,\gamma)\in E^s_{p,q}(T)\times E^{s-1}_{p,q}(T)$ and satisfies system \eqref{transport-system}. Since $(u^n,\gamma^n)_{n\in\mathbb{N}}$ is uniformly bounded in $L^\infty(0,T;F^s_{p,q}\times F^{s-1}_{p,q})$, then the Fatou property (Lemma \ref{lem:Triebel-Lizorkin-properties} ($iv$)) guarantees that $(u,\gamma)$ also belongs to $L^\infty(0,T;F^s_{p,q}\times F^{s-1}_{p,q})$. As $(u^n,\gamma^n)_{n\in\mathbb{N}}\to(u,\gamma)$ in $C([0.T];F^{s-1}_{p,q}\times F^{s-2}_{p,q})$, an interpolation argument insures that convergence actually holds in $C([0.T];F^{s'}_{p,q}\times F^{s'-1}_{p,q})$ for any $s'<s$. It is easy to pass to the limit in system \eqref{approximate-solution-system} and to conclude that $u$ is indeed a solution to system \eqref{transport-system}.
		
		Since $(u,\gamma)\in L^\infty(0,T;F^s_{p,q}\times F^{s-1}_{p,q})$, it follows that $F_1(t,x)$ and $F_2(t,x)$ also belong to $ L^\infty(0,T;F^s_{p,q})$ and $ L^\infty(0,T;F^{s-1}_{p,q})$ , respectively. In the case $q<\infty$, Theorem \ref{thm:solve-transport}  enables us to conclude that $(u,\gamma)\in C([0,T];F^s_{p,q}\times F^{s-1}_{p,q})$. Moreover ,we can find $\partial_tu $ and $\partial_t\gamma $ is in $C([0,T];F^{s-1}_{p,q}\times F^{s-2}_{p,q})$ if $q<\infty$, and $L^\infty(0,T;F^{s-1}_{p,q}\times F^{s-2}_{p,q})$ otherwise. Hence, $(u,\gamma)\in E^s_{p,q}(T)\times E^{s-1}_{p,q}(T)$.\\
		
		\textbf{Uniqueness:} Let us consider $(v,\eta)\in L^\infty(0,T;F^s_{p,q}\times F^{s-1}_{p,q})\cap C([0,T];\mathscr{S}'\times \mathscr{S}')$ is another solution to system \eqref{transport-system} with the initial data $(v_0,\eta_0)$. Denote $(w,\theta)\triangleq(v-u,\eta-\gamma)$ and $(w_0,\theta_0)\triangleq(v_0-u_0,\eta_0-\gamma_0) $. Then for every $t\in[0,T]$, we have
		\begin{equation}\label{eq:difference-equation}
			\begin{cases}
				\partial_tw+u\cdot\Delta w=P(t,x),\\
				\partial_t\theta+u\cdot\Delta\theta=Q(t,x),\\
				w(0,x)=w_0(x),\\
				\theta(0,x)=\theta_0(x),
			\end{cases}
		\end{equation}
		where
		\begin{align*}
			 P(t,x)
			&=-w\cdot\nabla v-(I-\Delta)^{-1}\big(w(\text{div}v)+u(\text{div}w)+w\cdot\nabla v^T+u\cdot\nabla w^T\big)\\
			&\quad-(I-\Delta)^{-1}\text{div}\big(\nabla w(\nabla v+\nabla v^T)+(\nabla u-\nabla u^T)\nabla w+\nabla u\nabla w^T-\nabla w^T\nabla v\big)\\
			&\quad-(I-\Delta)^{-1}\text{div}\big(-\nabla w(\text{div}v)-\nabla u(\text{div}w)\big)\\
			&\quad-(I-\Delta)^{-1}\text{div}\bigg(\frac{1}{2}\big(\nabla(u+v):\nabla w+(\eta+\gamma)\theta\big)I+\theta I\bigg)\\
			&\triangleq P_1+P_2+P_3+P_4,
		\end{align*}
		and
		\begin{equation}
			 Q(t,x)=-w\cdot\nabla\eta-\theta(\text{div}v)-\gamma(\text{div}w)-\text{div}w.
		\end{equation}
		Analogous to the proofs of \eqref{eq:equation10}-\eqref{eq:equation55}, we obtain that for $s>\max(1+\frac{d}{p},\frac{3}{2})$ (or $s=1+d$ with $p=1$ and $d\geq 2$),
		\begin{equation}
			\|w\cdot\nabla v\|_{F^{s-1}_{p,q}}\leq C\|w\|_{F^{s-1}_{p,q}}\|\nabla v\|_{F^{s-1}_{p,q}}\leq C\|w\|_{F^{s-1}_{p,q}}\| v\|_{F^{s}_{p,q}},
		\end{equation}
		\begin{equation}
			\|w\cdot\nabla \eta\|_{F^{s-2}_{p,q}}\leq C\|w\|_{F^{s-1}_{p,q}}\|\nabla \eta\|_{F^{s-2}_{p,q}}\leq C\|w\|_{F^{s-1}_{p,q}}\| \eta\|_{F^{s-1}_{p,q}},
		\end{equation}
		\begin{equation}
			\|\theta(\text{div}v)\|_{F^{s-2}_{p,q}}\leq C \|\theta\|_{F^{s-2}_{p,q}}\|\text{div}v\|_{F^{s-1}_{p,q}}\leq C \|\theta\|_{F^{s-2}_{p,q}}\|v\|_{F^{s}_{p,q}},
		\end{equation}
		\begin{equation}
			\|\gamma(\text{div}w)\|_{F^{s-2}_{p,q}}\leq C \|\text{div}w\|_{F^{s-2}_{p,q}}\|\gamma\|_{F^{s-1}_{p,q}}\leq C \|w\|_{F^{s-1}_{p,q}}\|\gamma\|_{F^{s-1}_{p,q}},
		\end{equation}
		\begin{equation}
			\|\text{div}w\|_{F^{s-2}_{p,q}}\leq C \|w\|_{F^{s-1}_{p,q}}.
		\end{equation}
		Likewise, for $s>\max(1+\frac{d}{p},\frac{3}{2})$ (or $s=1+d$ with $p=1$ and $d\geq 2$), Lemma \ref{lem:Triebel-Lizorkin-properties} $(vi)$ implies the following estimates:
		\begin{align}
			\|P_1\|_{F^{s-1}_{p,q}}&=\|(I-\Delta)^{-1}\big(w(\text{div}v)+u(\text{div}w)+w\cdot\nabla v^T+u\cdot\nabla w^T\big)\|_{F^{s-1}_{p,q}}\nonumber\\
			&\leq C\|w(\text{div}v)+u(\text{div}w)+w\cdot\nabla v^T+u\cdot\nabla w^T\|_{F^{s-3}_{p,q}}\nonumber\\
			&\leq C\|w(\text{div}v)+u(\text{div}w)+w\cdot\nabla v^T+u\cdot\nabla w^T\|_{F^{s-2}_{p,q}}\nonumber\\
			&\leq C\|w\|_{F^{s-1}_{p,q}} (\|v\|_{F^{s}_{p,q}}+\|u\|_{F^{s}_{p,q}}),
		\end{align}
	    \begin{align}
			\|P_2\|_{F^{s-1}_{p,q}}&\leq C\|\nabla w(\nabla v+\nabla v^T)+(\nabla u-\nabla u^T)\nabla w+\nabla u\nabla w^T-\nabla w^T\nabla v\|_{F^{s-2}_{p,q}}\nonumber\\
			&\leq C\|\nabla w\|_{F^{s-2}_{p,q}}\big(\|\nabla v\|_{F^{s-1}_{p,q}}+\|\nabla u\|_{F^{s-1}_{p,q}}\big)\nonumber\\
			&\leq C\| w\|_{F^{s-1}_{p,q}}\big(\| v\|_{F^{s}_{p,q}}+\| u\|_{F^{s}_{p,q}}\big),
		\end{align}
		\begin{equation}
			\|P_3\|_{F^{s-1}_{p,q}}
			\leq C\|-\nabla w(\text{div}v)-\nabla u(\text{div}w)\|_{F^{s-2}_{p,q}}\leq C\|w\|_{F^{s-1}_{p,q}}(\|v\|_{F^{s}_{p,q}}+\|u\|_{F^{s}_{p,q}}),
		\end{equation}
		\begin{align}
			\|P_4\|_{F^{s-1}_{p,q}}&\leq C\|\frac{1}{2}\big(\nabla(u+v):\nabla w+(\eta+\gamma)\theta\big)I+\theta I\|_{F^{s-2}_{p,q}}\nonumber\\
			&\leq C\big( \|w\|_{F^{s-1}_{p,q}}(\|v\|_{F^{s}_{p,q}}+\|u\|_{F^{s}_{p,q}})+\|\theta\|_{F^{s-2}_{p,q}}(\|\gamma\|_{F^{s-1}_{p,q}}+\|\eta\|_{F^{s-1}_{p,q}}+1)\big),
		\end{align}
		which implies
		\begin{equation}
			\|P\|_{F^{s-1}_{p,q}}\leq C \big(\|w\|_{F^{s-1}_{p,q}}(\|v\|_{F^{s}_{p,q}}+\|u\|_{F^{s}_{p,q}})+\|\theta\|_{F^{s-2}_{p,q}}(\|\gamma\|_{F^{s-1}_{p,q}}+\|\eta\|_{F^{s-1}_{p,q}}+1)\big),
		\end{equation}
		\begin{equation}
			\|Q\|_{F^{s-2}_{p,q}}\leq C\big( \|\theta\|_{F^{s-2}_{p,q}}\|v\|_{F^{s}_{p,q}}+\|w\|_{F^{s-1}_{p,q}}(\|\gamma\|_{F^{s-1}_{p,q}}+\|\eta\|_{F^{s-1}_{p,q}}+1)\big).
		\end{equation}
		Adding these two inequalities together, we obtain 
		\begin{equation}\label{eq:P+Qestimate}
			\|P\|_{F^{s-1}_{p,q}}+\|Q\|_{F^{s-2}_{p,q}}\leq C(\|w\|_{F^{s-1}_{p,q}}+\|\theta\|_{F^{s-2}_{p,q}})A(t,x),
		\end{equation}
		where $A(t,x)\triangleq \|u\|_{F^{s}_{p,q}}+\|v\|_{F^{s}_{p,q}}+\|\gamma\|_{F^{s-1}_{p,q}}+\|\eta\|_{F^{s-1}_{p,q}}+1$.
		Applying Theorem \ref{Thm:a-priori-estimates} to the systems \eqref{eq:difference-equation}, one deduces that
		\begin{align}
			\|w\|_{F^{s-1}_{p,q}}+\|\theta\|_{F^{s-2}_{p,q}}&\leq \Big((\|w_0\|_{F^{s-1}_{p,q}}+\|\theta_0\|_{F^{s-2}_{p,q}})+C\int_{0}^te^{-C\int_{0}^\tau \|\nabla u(\tau')\|_{F^{s-1}_{p,q}}d\tau'}\nonumber\\
			&\quad\times(\|w\|_{F^{s-1}_{p,q}}+\|\theta\|_{F^{s-2}_{p,q}})A(t,x)d\tau\Big)e^{C\int_{0}^t\|\nabla u(\tau)\|_{F^{s-1}_{p,q}}d\tau}.
		\end{align}
		Hence, applying the Gronwall inequality, we reach
		\begin{equation}\label{eq:equation63}
			\|w\|_{F^{s-1}_{p,q}}+\|\theta\|_{F^{s-2}_{p,q}}\leq(\|w_0\|_{F^{s-1}_{p,q}}+\|\theta_0\|_{F^{s-2}_{p,q}})e^{C\int_{0}^t A(\tau,x)d\tau}. 
		\end{equation}
		If $(w_0,\theta_0)=(0,0)$, i.e., $v=u$ and $\eta=\gamma$, then  \eqref{eq:equation63} implies the uniqueness of the solution.\\
		
		\textbf{Continuity (continuous dependence of the solution map):} 
		Note that \eqref{eq:equation63} combined with an obvious interpolation ensures the continuity with respect to the initial data in $E^{s'}_{p,q}(T)\times E^{s'-1}_{p,q}(T))$ for any $s'<s$.
		
		In the case of $q<\infty$, from \eqref{def:S_j}, let $(u^N,\gamma^N)$ be the corresponding solution with initial data $(S_{N+1}u_0,S_{N+1}\gamma_0)$. Set $(w^N,\theta^N)=(u-u^N,\gamma-\gamma^N)$. It is not hard to see that $(w^N,\theta^N)$ solves the following system:
		\begin{equation}
			\begin{cases*}
				\partial_tw^N+(u\cdot\nabla)w^N=-(w^N\cdot\nabla)u^N+F_1(u,\gamma)-F_1(u^N,\gamma^N),\\
				\partial_t\theta^N+(u\cdot\nabla)\theta^N=-(w^N\cdot\nabla)\gamma^N+F_2(u,\gamma)-F_2(u^N,\gamma^N),\\
				w^N(0)=u_0-S_{N+1}u_0,\\
				\theta^N(0)=\gamma_0-S_{N+1}\gamma_0.
			\end{cases*}
		\end{equation}
		In a manner analogous to the proof of \eqref{eq:equation63}, we have
		\begin{equation}\label{eq:equation64}
			\|w^N\|_{E^s_{p,q}(T)}+\|\theta^N\|_{E^{s-1}_{p,q}(T)}\leq C(\|u_0-S_{N+1}u_0\|_{E^s_{p,q}(T)}+\|\gamma_0-S_{N+1}\gamma_0\|_{E^{s-1}_{p,q}(T)}).
		\end{equation}
		
		Now, we show the continuity of the
		solution map in $E^s_{p,q}(T)\times E^{s-1}_{p,q}(T)$ as $1\leq p,q<\infty$. Indeed, let $u_0,v_0,\gamma_0,\eta_0\in D(R)$. By virtue of \eqref{eq:equation63}, \eqref{eq:equation64} and  Lemma \ref{lem:Triebel-Lizorkin-properties} $(v)$, we infer
		\begin{align*}
			&\quad\|u-v\|_{L^\infty_T(F^{s}_{p,q})}+\|\gamma-\eta\|_{L^\infty_T(F^{s-1}_{p,q})}\\
			&\leq \|u-u^N\|_{L^\infty_T(F^{s}_{p,q})}+\|\gamma-\gamma^N\|_{L^\infty_T(F^{s-1}_{p,q})}+\|v-v^N\|_{L^\infty_T(F^{s}_{p,q})}+\|\eta-\eta^N\|_{L^\infty_T(F^{s-1}_{p,q})}\\
			&\quad+\|u^N-v^N\|_{L^\infty_T(F^{s}_{p,q})}+\|\gamma^N-\eta^N)\|_{L^\infty_T(F^{s-1}_{p,q})}\\
			&\leq C\big( \|u_0-S_{N+1}u_0\|_{F^{s}_{p,q}}+\|v_0-S_{N+1}v_0\|_{F^{s}_{p,q}}+\|\gamma_0-S_{N+1}\gamma_0\|_{F^{s-1}_{p,q}}+\|\eta_0-S_{N+1}\eta_0\|_{F^{s-1}_{p,q}}\big)\\
			&\quad+C\|S_{N+1}u_0-S_{N+1}v_0\|^{\frac{1}{2}}_{L^\infty_T(F^{s-1}_{p,q})}\|S_{N+1}u_0-S_{N+1}v_0\|^{\frac{1}{2}}_{L^\infty_T(F^{s+1}_{p,q})}\\
			&\quad+C\|S_{N+1}\gamma_0-S_{N+1}\eta_0\|^{\frac{1}{2}}_{L^\infty_T(F^{s-2}_{p,q})}\|S_{N+1}\gamma_0-S_{N+1}\eta_0\|^{\frac{1}{2}}_{L^\infty_T(F^{s}_{p,q})}\\
			& \leq C\big( \|u_0-S_{N+1}u_0\|_{F^{s}_{p,q}}+\|v_0-S_{N+1}v_0\|_{F^{s}_{p,q}}+\|\gamma_0-S_{N+1}\gamma_0\|_{F^{s-1}_{p,q}}+\|\eta_0-S_{N+1}\eta_0\|_{F^{s-1}_{p,q}}\big)\\
			&\quad+C2^{\frac{N}{2}}R^{\frac{1}{2}}(\|u_0-v_0\|_{F^{s-1}_{p,q}}^{\frac{1}{2}}+\|\gamma_0-\eta_0\|_{F^{s-2}_{p,q}}^{\frac{1}{2}}),
		\end{align*}
		where we used Lemma 
		\ref{lem:Snguji} in the last inequality. Since $1\leq p,q<\infty$, then for any $\varepsilon>0$, one can select $N$ to be sufficiently large, such that
		\begin{equation*}
			C\big( \|u_0-S_{N+1}u_0\|_{F^{s}_{p,q}}+\|v_0-S_{N+1}v_0\|_{F^{s}_{p,q}}+\|\gamma_0-S_{N+1}\gamma_0\|_{F^{s-1}_{p,q}}+\|\eta_0-S_{N+1}\eta_0\|_{F^{s-1}_{p,q}}\big)\leq \frac{\varepsilon}{2}.
		\end{equation*}
		Then we choose $\sigma$ small enough such that $\|u_0-v_0\|_{F^{s}_{p,q}},\|\gamma_0-\eta_0\|_{F^{s-1}_{p,q}}<\sigma$ and $C2^{\frac{N}{2}}R^{\frac{1}{2}}\sigma^{\frac{1}{2}}< \frac{\varepsilon}{4}$. Hence, we have
		\begin{equation*}
			\|u-v\|_{L^\infty_T(F^{s}_{p,q})}+\|\gamma-\eta\|_{L^\infty_T(F^{s-1}_{p,q})}\leq \varepsilon.
		\end{equation*}
		This yields the continuous dependence of the solution map. Therefore, we complete the proof of Theorem \ref{thm:well-poseness}.
	\end{proof}

	\section{Blow-up criterion for two-component Euler-poincar\'{e} system}
	In this section, we will prove the blow-up criterion of the strong solutions to system \eqref{transport-system} by means of the Littlewood-Paley decomposition and the classical energy method. In order to prove our blow-up criterion Theorem \ref{thm:Blow-up-criterion-2}, we need the following lemma.
	
	\begin{lemma}\label{thm:blow-up}
		Suppose that $d\in\mathbb{N}^+$, and $(p,q)\in[1,\infty)\times[1,\infty]$ or $p=q=\infty$. Let $(u_0,\gamma_0)\in F^s_{p,q}(\mathbb{R}^d)\times F^{s-1}_{p,q}(\mathbb{R}^d)$ with $s>\max(1+\frac{d}{p},\frac{3}{2})$ (or $s=1+d$ with $p=1$ and $d\geq 2$) and $(u,\gamma)$ be the corresponding solution to system \eqref{transport-system}. If the solution $(u,\gamma)$ blows up in finite time (i.e. the lifespan of solution $T^\star<\infty$), then
		\begin{equation*}
			\int_{0}^{T^\star}\big(\|u(\tau)\|_{L^\infty}+\|\nabla u(\tau)\|_{L^\infty}+\|\gamma(\tau)\|_{L^\infty}\big)d\tau=\infty.
		\end{equation*}
	\end{lemma}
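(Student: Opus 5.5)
We argue by contradiction. Suppose $T^\star<\infty$ but
\[ \int_0^{T^\star}\big(\|u\|_{L^\infty}+\|\nabla u\|_{L^\infty}+\|\gamma\|_{L^\infty}\big)d\tau<\infty. \]
We will show that $\|u(t)\|_{F^s_{p,q}}+\|\gamma(t)\|_{F^{s-1}_{p,q}}$ stays bounded on $[0,T^\star)$. Since the existence time $T$ in Theorem \ref{thm:well-poseness} depends only on $\|u_0\|_{F^s_{p,q}}+\|\gamma_0\|_{F^{s-1}_{p,q}}$ (the step-2 choice $2C^2(\cdot)T<1$), restarting the solution from a time close to $T^\star$ extends it beyond $T^\star$. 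This contradicts maximality, and uniqueness guarantees that the extension is consistent.

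\textbf{Step 1: estimate for $u$.} Apply the $f=v$ case of Theorem \ref{Thm:a-priori-estimates} to the $u$-equation, which gives $Z=\|\nabla u\|_{L^\infty}$. Equivalently, use \eqref{eq:commutator-estimates3} inside \eqref{eq:cedububian}. This yields
\[ \|u(t)\|_{F^s_{p,q}}\le \|u_0\|_{F^s_{p,q}}+\int_0^t\big(C\|\nabla u\|_{L^\infty}\|u\|_{F^s_{p,q}}+\|F_1(u,\gamma)\|_{F^s_{p,q}}\big)d\tau. \]
To bound $F_1$ we use the lifting property (Lemma \ref{lem:Triebel-Lizorkin-properties} (vi)), which reduces matters to $F^{s-1}_{p,q}$ norms of the quadratic terms $\nabla u\nabla u$, $\gamma^2$, $\gamma$, and to $F^{s-2}_{p,q}\subset F^{s-1}_{p,q}$ norms of $u\,\mathrm{div}u$ and $\nabla u^T u$. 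Since $s-1>0$, the tame algebra estimate of Lemma \ref{lem:Triebel-Lizorkin-properties} (iii) gives
\[ \|F_1\|_{F^s_{p,q}}\lesssim \big(\|u\|_{L^\infty}+\|\nabla u\|_{L^\infty}+\|\gamma\|_{L^\infty}+1\big)\big(\|u\|_{F^s_{p,q}}+\|\gamma\|_{F^{s-1}_{p,q}}\big). \]

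\textbf{Step 2: estimate for $\gamma$ (the main obstacle).} The $\gamma$-equation is transported at regularity $s-1$. The general case of Theorem \ref{Thm:a-priori-estimates} would require $\|\nabla u\|_{F^{s-1}_{p,q}}$ or $\|\nabla u\|_{F^{d/p}_{p,\infty}}$, not just $L^\infty$. We therefore go back to \eqref{eq:cedububian} and use instead the estimate \eqref{eq:commutator-estimates5} with $v=u$, $f=\gamma$ and index $s-1>0$:
\[ \Big\|\big\|2^{j(s-1)}[u,\Delta_j]\cdot\nabla\gamma\big\|_{l^q}\Big\|_{L^p}\lesssim \|\nabla u\|_{L^\infty}\|\gamma\|_{F^{s-1}_{p,q}}+\|\gamma\|_{L^\infty}\|\nabla u\|_{F^{s-1}_{p,q}}. \]
Both terms are linear in the top norms with coefficients integrable in time. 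Likewise, Lemma \ref{lem:Triebel-Lizorkin-properties} (iii) gives
\[ \|\gamma\,\mathrm{div}u\|_{F^{s-1}_{p,q}}\lesssim \|\gamma\|_{L^\infty}\|u\|_{F^s_{p,q}}+\|\nabla u\|_{L^\infty}\|\gamma\|_{F^{s-1}_{p,q}}, \]
and $\|\mathrm{div}u\|_{F^{s-1}_{p,q}}\lesssim\|u\|_{F^s_{p,q}}$.

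The delicate points are the following.
\begin{itemize}
\item The term $\mathrm{div}u$ in the transport speed enters \eqref{eq:cedububian} only through $\|\mathrm{div}u\|_{L^\infty}$, which is harmless.
\item In the critical case $s=1+d$, $p=1$, the estimate \eqref{eq:commutator-estimates5} and the tame product estimate still apply because they only need $s-1>0$. This is precisely why the lemma is phrased with $L^\infty$ norms rather than through the algebra property.
\end{itemize}
If the mapping $F^{s-2}\to F^{s}$ for the $R_2$-type terms causes trouble, we will simply bound them in $F^{s-1}_{p,q}$, since $(I-\Delta)^{-1}$ gains two derivatives anyway.

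\textbf{Step 3: closing the argument.} Adding the two estimates, $\Gamma(t)=\|u\|_{F^s_{p,q}}+\|\gamma\|_{F^{s-1}_{p,q}}$ satisfies
\[ \Gamma(t)\le \Gamma(0)+C\int_0^t\big(1+\|u\|_{L^\infty}+\|\nabla u\|_{L^\infty}+\|\gamma\|_{L^\infty}\big)\Gamma\,d\tau. \]
Gronwall's inequality then gives $\sup_{t<T^\star}\Gamma(t)<\infty$, and the continuation argument described at the start completes the proof.
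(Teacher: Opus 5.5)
Your proposal is correct and follows essentially the same route as the paper. It uses the trajectory/Minkowski inequality \eqref{eq:cedububian} at levels $s$ and $s-1$, the commutator bounds \eqref{eq:commutator-estimates3} for $u$ and \eqref{eq:commutator-estimates5} at index $s-1$ for $\gamma$, and the tame product and lifting estimates of Lemma \ref{lem:Triebel-Lizorkin-properties} (iii),(vi) for $F_1,F_2$, followed by Gronwall and a continuation argument that you spell out more explicitly than the paper. One small slip: the inclusion you need is $F^{s-1}_{p,q}\hookrightarrow F^{s-2}_{p,q}$, not $F^{s-2}_{p,q}\subset F^{s-1}_{p,q}$, although you apply it in the correct direction.
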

	\begin{remark}\label{remark:blow-up}
		The maximal existence time $T^\star$ in Theorem \ref{thm:well-poseness} can be chosen independent of the regularity index $s$. Indeed, let $(u_0,\gamma_0) \in F_{p,q}^s(\mathbb{R}^d)\times F_{p,q}^{s-1}(\mathbb{R}^d)$ with $s > \max\bigl(1 + \frac{d}{p}, \frac{3}{2}\bigr)$ and some $s' \in \bigl(\max(1 + \frac{d}{p}, \frac{3}{2}), s\bigr)$. Then Theorem \ref{thm:well-poseness} ensures that there exists a unique $F_{p,q}^s(\mathbb{R}^d)\times F_{p,q}^{s-1}(\mathbb{R}^d)$ (resp., $F_{p,q}^{s'}(\mathbb{R}^d)\times F_{p,q}^{s'-1}(\mathbb{R}^d)$) solution $(u_s,\gamma_s)$ (resp., $(u_{s'},\gamma_{s'})$) to system \eqref{transport-system} with the maximal existence time $T^\star_s$ (resp., $T^\star_{s'}$). Since $F_{p,q}^s(\mathbb{R}^d) \hookrightarrow F_{p,q}^{s'}(\mathbb{R}^d)$, it then follows from the uniqueness that $T^\star_s \leq T^\star_{s'}$ and $(u_s,\gamma_s) \equiv (u_{s'},\gamma_{s'})$ on $[0, T^\star_s)$. On the other hand, if we suppose that $T^\star_s < T^\star_{s'}$, then $(u_s,\gamma_s) \in C([0, T^\star_s]; F_{p,q}^{s'}(\mathbb{R}^d)\times F_{p,q}^{s'-1}(\mathbb{R}^d))$. Hence $(u_s,\gamma_s) \in L^1(0, T^\star_s; \operatorname{Lip}(\mathbb{R}^d)\times L^\infty(\mathbb{R}^d))$, which leads to a contradiction to Lemma \ref{thm:blow-up}. Therefore, $T^\star_s = T^\star_{s'}$. 
	\end{remark}
		\begin{proof}[Proof of Lemma \ref{thm:blow-up}]
				Applying the frequency localization operator $\Delta_j$ to both sides of the first equation in
				system \eqref{transport-system}, we can find 
			\begin{equation}
				\begin{cases}
					(\partial_t+u\cdot\nabla)\Delta_ju=\Delta_j F_1+[u,\Delta_j]\cdot\nabla u,\\
					\Delta_ju|_{t=0}=\Delta_ju_0.
				\end{cases}
			\end{equation}	
					
			Let us introduce particle trajectory mapping $X(t,\alpha)$, by definition, the solution of the following ordinary differential equation
			\begin{equation}
				\begin{cases}
					\partial_tX(t,\alpha)=u(t,X(t,\alpha)),\\
					X(0,\alpha)=\alpha.
				\end{cases}
			\end{equation}		
			This implies 
			\begin{equation}
				\partial_t\big(\Delta_ju(t,X(t,\alpha))\big)=\Delta_jF_1(t,X(t,\alpha))+[u,\Delta_j]\cdot\nabla u(t,X(t,\alpha)).
			\end{equation}
			We define the Jacobian of $X(t,\alpha)$ by $J(t,\alpha)=\det\big(\nabla X(t,\alpha)\big)$. Thus, one has
			\begin{align}
				&\mathrel{\phantom{=}}\partial_t\big(J(t,\alpha)\Delta_ju(t,X(t,\alpha))\big)\nonumber\\
				&=J(t,\alpha)\textup{div}u(t,X(t,\alpha))\Delta_ju(t,X(t,\alpha))+J(t,\alpha)\Delta_jF_1(t,X(t,\alpha))\nonumber\\
				&\mathrel{\phantom{=}}+J(t,\alpha)[u,\Delta_j]\cdot\nabla u(t,X(t,\alpha)),
			\end{align}
			which yields
			\begin{align}
				&\mathrel{\phantom{=}}\big|J(t,\alpha)\Delta_ju(t,X(t,\alpha))\big|\nonumber\\
				&=|\Delta_ju_0|+\int_{0}^{t}\big|J(\tau,\alpha)\textup{div}u(\tau,X(\tau,\alpha))\Delta_ju(\tau,X(\tau,\alpha))\big|d\tau\nonumber\\
				&\mathrel{\phantom{=}}+\int_{0}^{t}\big|J(\alpha,\tau)\Delta_jF_1(\tau,X(\tau,\alpha))\big|d\tau+\int_{0}^{t}\big|J(\tau,\alpha)[u,\Delta_j]\cdot\nabla u(\tau,X(\tau,\alpha))\big|d\tau.\label{eq:equation1}
			\end{align}
			Multiplying $2^{js}$ and taking $l^q$ norm on both sides of \eqref{eq:equation1}, we get by using the Minkowski inequality that
			\begin{align}
				&\mathrel{\phantom{=}}J(t,\alpha)\left(\sum_{j\in\mathbb{Z} }\left|2^{js}\Delta_ju(t,X(t,\alpha))\right|^{q}\right)^{\frac{1}{q}}\nonumber\\
				&\leq\left(\sum_{j\in\mathbb{Z} }\left|2^{js}\Delta_ju_0\right|^{q}\right)^{\frac{1}{q}}+\int_{0}^{t}J(\tau,\alpha)\textup{div}u(\tau,X(\tau,\alpha))\left(\sum_{j\in\mathbb{Z} }\left|2^{js}\Delta_ju(\tau,X(\tau,\alpha))\right|^{q}\right)^{\frac{1}{q}}d\tau\nonumber\\
				&\mathrel{\phantom{=}}+\int_{0}^{t}J(\tau,\alpha)\left(\sum_{j\in\mathbb{Z} }\left|2^{js}\Delta_jF_1(\tau,X(\tau,\alpha))\right|^{q}\right)^{\frac{1}{q}}d\tau\nonumber\\
				&\mathrel{\phantom{=}}+\int_{0}^{t}J(\tau,\alpha)\left(\sum_{j\in\mathbb{Z} }\left|2^{js}\left([u,\Delta_j]\cdot\nabla u(\tau,X(\tau,\alpha))\right)\right|^{q}\right)^{\frac{1}{q}}d\tau.\label{eq:equation2}
			\end{align}
			Next,taking the $L^p$ norm with respect to $\alpha\in\mathbb{R}^d$ on both sides of \eqref{eq:equation2}, we get by using the Minkowski inequality that
			\begin{align}
				&\mathrel{\phantom{=}}\bigg(\int_{\mathbb{R}^d}\bigg|J(\tau,\alpha)\big(\sum_{j\in\mathbb{Z} }\big|2^{js}\Delta_ju(t,X(t,\alpha))\big|^{q}\big)^{\frac{1}{q}}\bigg|^pd\alpha\bigg)^\frac{1}{p}\nonumber\\
				&\leq\int_{0}^{t}\bigg(\int_{\mathbb{R}^d}\bigg|J(\tau,\alpha)\textup{div}u(\tau,X(\tau,\alpha))\big(\sum_{j\in\mathbb{Z} }\big|2^{js}\Delta_ju(\tau,X(\tau,\alpha))\big|^{q}\big)^{\frac{1}{q}}\bigg|^pd\alpha\bigg)^\frac{1}{p}d\tau\nonumber\\
				&\quad+\|u_0\|_{F^s_{p,q}}+\int_{0}^{t}\bigg(\int_{\mathbb{R}^d}\bigg|J(\tau,\alpha)\big(\sum_{j\in\mathbb{Z} }\big|2^{js}\Delta_jF_1(\tau,X(\tau,\alpha))\big|^{q}\big)^{\frac{1}{q}}\bigg|^pd\alpha\bigg)^\frac{1}{p}d\tau\nonumber\\
				&\quad+\int_{0}^{t}\bigg(\int_{\mathbb{R}^d}\bigg|J(\tau,\alpha)\big(\sum_{j\in\mathbb{Z} }\big|2^{js}\left([u,\Delta_j]\cdot\nabla u(\tau,X(\tau,\alpha))\big)\big|^{q}\right)^{\frac{1}{q}}\bigg|^pd\alpha\bigg)^\frac{1}{p}d\tau\nonumber,
			\end{align}
			which gives
			\begin{align}
				\|u\|_{F^s_{p,q}}
				&\leq\|u_0\|_{F^s_{p,q}}+\int_{0}^{t}\|\textup{div}u\|_{L^\infty}||u||_{F^s_{p,q}}d\tau+\int_{0}^{t}\|F_1(\tau,x)\|_{F^s_{p,q}}d\tau\nonumber\\
				&+\int_{0}^{t}\Big\|\big\|2^{js}[u,\Delta_j]\cdot\nabla u\big\|_{l^q}\Big\|_{L^p}d\tau.\label{eq:equation3}
			\end{align}
			Similar to the proof of \eqref{eq:equation3}, one has
			\begin{align}
				\|\gamma\|_{F^{s-1}_{p,q}}
				&\leq\|\gamma_0\|_{F^{s-1}_{p,q}}+\int_{0}^{t}\|\textup{div}u\|_{L^\infty}||\gamma||_{F^{s-1}_{p,q}}d\tau+\int_{0}^{t}\|F_2(\tau,x)\|_{F^{s-1}_{p,q}}d\tau\nonumber\\
				&+\int_{0}^{t}\Big\|\big\|2^{j(s-1)}[u,\Delta_j]\cdot\nabla \gamma\big\|_{l^q}\Big\|_{L^p}d\tau.\label{eq:equation4}
			\end{align}
			By means of Theorem \ref{prop:commutator-estimates}, the last terms in \eqref{eq:equation3} and \eqref{eq:equation4} are bounded by
			\begin{equation*}
				\int_{0}^{t}\|\nabla u\|_{L^\infty}\|u\|_{F^s_{p,q}}d\tau,
			\end{equation*}
			and
			\begin{equation*}
				\int_{0}^{t}\|\nabla u\|_{L^\infty}\|\gamma\|_{F^{s-1}_{p,q}}+\|\gamma\|_{L^\infty}\|u\|_{F^{s}_{p,q}}d\tau,
			\end{equation*}
			respectively. Applying Lemma \ref{lem:Triebel-Lizorkin-properties} $(iii)$ and Lemma \ref{lem:Triebel-Lizorkin-properties} $(vi)$, one can get
			\begin{align}
				\|F_1\|_{F^s_{p,q}}&\leq C(\|\nabla u\|_{L^\infty}\|u\|_{F^s_{p,q}}+\| u\|_{L^\infty}\|u\|_{F^s_{p,q}}+\|\gamma\|_{L^\infty}\|\gamma\|_{F^{s-1}_{p,q}}+\|\gamma\|_{F^{s-1}_{p,q}})\nonumber\\
				&\leq C\big((\|\nabla u\|_{L^\infty}+\|u\|_{L^\infty})\|u\|_{F^s_{p,q}}+(\|\gamma\|_{L^\infty}+1)\|\gamma\|_{F^{s-1}_{p,q}}\big),
			\end{align}
			and
			\begin{align}
				\|F_2\|_{F^{s-1}_{p,q}}&\leq \|\gamma \text{div}u\|_{F^{s-1}_{p,q}}+\|\text{div} u\|_{F^{s-1}_{p,q}}\nonumber\\
				&\leq C(\|\gamma\|_{L^\infty}\|u\|_{F^s_{p,q}}+\|\gamma\|_{F^{s-1}_{p,q}}\|\nabla u\|_{L^\infty}+\| u\|_{F^{s}_{p,q}}).
			\end{align}
			From the above inequalities, we obtain
			\begin{align}
				\|u\|_{F^s_{p,q}}+\|\gamma\|_{F^{s-1}_{p,q}}
				&\leq\|u_0\|_{F^s_{p,q}}+ \|\gamma_0\|_{F^{s-1}_{p,q}}+C\int_{0}^{t}(\|u\|_{F^s_{p,q}}+\|\gamma\|_{F^{s-1}_{p,q}})\\
				&\quad\times(\|u\|_{L^\infty}+\|\nabla u\|_{L^\infty}+\|\gamma\|_{L^\infty}+1)d\tau,
			\end{align}
			 which together with the Gronwall inequality yields
			\begin{equation}\label{eq:equation6}
				\|u\|_{F^s_{p,q}}+\|\gamma\|_{F^{s-1}_{p,q}}\leq (\|u_0\|_{F^s_{p,q}}+\|\gamma_0\|_{F^{s-1}_{p,q}})e^{C\int_{0}^{t}(\|u\|_{L^\infty}+\|\nabla u\|_{L^\infty}+\|\gamma\|_{L^\infty}+1)d\tau}.
			\end{equation}	
			Therefore, if the maximal existence time $T^\star<\infty$ satisfies \[ \int_{0}^{T^\star}\big(\|u(\tau)\|_{L^\infty}+\|\nabla u(\tau)\|_{L^\infty}+\|\gamma(\tau)\|_{L^\infty}\big)d\tau<\infty, \]
			then \eqref{eq:equation6} implies that
			\[ \limsup_{t\to T^\star}(\|u\|_{F^s_{p,q}}+\|\gamma\|_{F^{s-1}_{p,q}})<\infty, \]
			which contradicts the assumption on the maximal existence time $T^\star<\infty$. This completes the proof of Lemma \ref{thm:blow-up}.
	\end{proof}

	\begin{proof}[Proof of theorem \ref{thm:Blow-up-criterion-2}]
		Taking the $L^2(\mathbb{R}^d)$ inner product of the first and second equations in system \eqref{transport-system} with sgn$(u)|u|^{p-1}$ and sgn$(\gamma)|\gamma|^{p-1}$, respectively, then using integrating by parts and the H\"{o}lder inequality, one deduces
		\begin{equation}\label{eq:equation5}
			\frac{d}{dt}\|u\|_{L^p}\leq \|F_1(t,x)\|_{L^p}+\frac{1}{p}\|\nabla u\|_{L^\infty}\|u\|_{L^p},
		\end{equation}
		and
		\begin{equation}\label{eq:equation7}
			\frac{d}{dt}\|\gamma\|_{L^p}\leq \|F_2(\tau,x)\|_{L^p}+\frac{1}{p}\|\nabla u\|_{L^\infty}\|\gamma\|_{L^p}.
		\end{equation}
		Note that if $g=(I-\Delta)^{-1}f=G*f$ with $G(x)$ be the corresponding Green function, then
		\begin{equation}\label{eq:Green-function}
			\|D^kg\|_{L^p}\leq C\|f\|_{L^p},
		\end{equation}
		holds for all $1 < p <  \infty,k=0,1,2$, and the constant $C$ is independent of $p$. Thus, there exists a constant $C$ > 0 independent of $p$ such that
		\begin{align}\label{eq:equation71}
			\|F_1\|_{L^p}\leq C\big(\|\nabla u\|_{L^\infty}(\|u\|_{L^p}+\|\nabla u\|_{L^p})+\|\gamma\|_{L^p}(\|\gamma\|_{L^\infty}+1)\big).
		\end{align}			
		Furthermore, one readily finds that
		\begin{equation}\label{eq:equation73}
			\|F_2\|_{L^p}\leq C\|\nabla u \|_{L^\infty}(\|\gamma\|_{L^p}+1).
		\end{equation}
		
		Applying $\nabla$ to both sides of the first equation in system \eqref{transport-system} gives
		\begin{equation}\label{applying-nabla-transport-equation}
			\partial_t(\nabla u)+\nabla(u\cdot\nabla u)=\nabla F_1(u,\gamma).
		\end{equation}
		Taking the $L^2(\mathbb{R}^d)$ inner product of the above equation with sgn$(\nabla u)|\nabla u|^{p-1}$,then using integrating by parts and the H\"{o}lder inequality, we infer
		\begin{equation}\label{eq:equation74}
			\frac{d}{dt}\|\nabla u\|_{L^p}\leq\frac{2}{p} \|\nabla u\|_{L^\infty}\|\nabla u\|_{L^p}+\|\nabla F_1\|_{L^p}.
		\end{equation}
		Similar to \eqref{eq:equation71}, there exists a constant $C>0$ independent of $p$ such that
		\begin{align}\label{eq:equation72}
			\|\nabla F_1\|_{L^p}\leq C\big(\|\nabla u\|_{L^\infty}(\|u\|_{L^p}+\|\nabla u\|_{L^p})+\|\gamma\|_{L^p}(\| \gamma\|_{L^\infty}+1)\big).
		\end{align}	
		Combining the above estimate with \eqref{eq:equation5} and \eqref{eq:equation7}, and then applying the Gronwall inequality, we find that
		\begin{align*}
			\|u\|_{L^p}+\|\nabla u\|_{L^p}\leq e^{C\int_{0}^t\|\nabla u\|_{L^\infty}d\tau}\big(\|u_0\|_{L^p}+\|\nabla u_0\|_{L^p}+\int_{0}^t\|\gamma\|_{L^p}(\|\gamma\|_{L^\infty}+1)d\tau\big),
		\end{align*}
		and
		\begin{align*}
			\|\gamma\|_{L^p}\leq e^{C\int_{0}^t\|\nabla u\|_{L^\infty}d\tau}\big(\|\gamma_0\|_{L^p}+1\big).
		\end{align*}
		Letting $p\to\infty$ and recalling the assumption $F^s_{p,q}(\mathbb{R}^d)\hookrightarrow\text{Lip}(\mathbb{R}^d)$ and $F^{s-1}_{p,q}(\mathbb{R}^d)\hookrightarrow L^\infty(\mathbb{R}^d)$ as $s>1+\frac{d}{p}$ or $s=1+d$ with $p=1$, one gets
		\begin{equation}
			\|u\|_{L^\infty}+\|\nabla u\|_{L^\infty}\leq e^{C\int_{0}^t\|\nabla u\|_{L^\infty}d\tau}\big(\|u_0\|_{F^s_{p,q}}+\int_{0}^t\|\gamma\|_{L^\infty}(\|\gamma\|_{L^\infty}+1)d\tau\big),
		\end{equation}
		and
		\begin{equation}
			\|\gamma\|_{L^\infty}\leq e^{C\int_{0}^t\|\nabla u\|_{L^\infty}d\tau}\big(\|\gamma_0\|_{F^{s-1}_{p,q}}+1\big).
		\end{equation}
		Thus, we have
		\begin{equation}
			\begin{aligned}
				&\quad\|u\|_{L^\infty}+\|\nabla u\|_{L^\infty}+\|\gamma\|_{L^\infty}\\
				&\leq Ce^{C\int_{0}^t\|\nabla u\|_{L^\infty}d\tau}\big(\|u_0\|_{F^s_{p,q}}+\|\gamma_0\|_{F^{s-1}_{p,q}}+1+\|\gamma_0\|^2_{F^{s-1}_{p,q}}t(\|\gamma_0\|^2_{F^{s-1}_{p,q}}t+1)\big),
			\end{aligned} 	
		\end{equation}
		which together with \eqref{eq:equation6} and Lemma \ref{thm:blow-up} complete the proof of Theorem \ref{thm:Blow-up-criterion-2}. 
	\end{proof}

	\noindent{\bf Acknowledgments.} This work was partially supported by the National Natural Science Foundation of China under grant 11971188.

	\bibliographystyle{abbrv}
	\bibliography{ref}
	
\end{document}